\newcommand{\N}{\mathbb N}
\newcommand{\Z}{\mathbb Z}
\newcommand{\eps}{\varepsilon}
\renewcommand{\P}{\mathbb P}
\newcommand{\F}{\mathcal F}
\newcommand{\ind}{\mathbbm 1}
\newtheorem{theorem}{Theorem}[section]
\newtheorem{lemma}[theorem]{Lemma}
\newtheorem{proposition}[theorem]{Proposition}
\newtheorem{corollary}[theorem]{Corollary}
\newtheorem{thmx}{Theorem}
\newtheorem{remark}[theorem]{Remark}
\theoremstyle{definition}
\newcommand{\dd}{\text{d}}
\newcommand{\f}{\mathfrak{f}}
\newcommand{\esssup}{\operatorname{ess\,sup}}
\renewcommand{\tilde}{\widetilde}
\newcommand{\bbE}{{\ensuremath{\mathbb E}} }
\newcommand{\bbN}{{\ensuremath{\mathbb N}} }
\newcommand{\bbP}{{\ensuremath{\mathbb P}} }
\newcommand{\bbR}{{\ensuremath{\mathbb R}} }
\newcommand{\bbZ}{{\ensuremath{\mathbb Z}} }
\renewcommand{\epsilon}{\varepsilon}
\newcommand{\gb}{\beta}
\newcommand{\gep}{\varepsilon}       
\newcommand{\go}{\omega}
\newcommand{\gl}{\lambda}
\newcommand{\cF}{{\ensuremath{\mathcal F}} }
\newcommand{\cT}{{\ensuremath{\mathcal T}} }
\newcommand{\bP}{{\ensuremath{\mathbf P}} }
\newcommand{\bE}{{\ensuremath{\mathbf E}} }
\newcommand{\bH}{{\ensuremath{\mathbf H}} }
\newcommand{\lint}{\llbracket}
\newcommand{\rint}{\rrbracket}
\newcommand{\maxtwo}[2]{\max_{\substack{#1 \\ #2}}} 
\newcommand{\sumtwo}[2]{\sum_{\substack{#1 \\ #2}}} 
\definecolor{darkred}{rgb}{0.7,0.1,0.1}
\renewcommand{\hat}{\widehat}
\renewcommand{\complement}{\mathsf{c}}
\begin{document}

\author{Stefan Junk}
\address{Gakushuin University, 1-5-1 Mejiro, Toshima-ku, Tokyo 171-8588 Japan}
\email{sjunk@math.gakushuin.ac.jp}
\author{Hubert Lacoin}
\address{IMPA, Estrada Dona Castorina 110, 
Rio de Janeiro RJ-22460-320- Brasil}
\email{lacoin@impa.br}

\title{Strong disorder and very strong disorder are equivalent for directed polymers}

\begin{abstract}
We show that if the normalized partition function $W^{\beta}_n$ of the directed polymer model on $\bbZ^d$ converges to zero, then it does so exponentially fast. This implies that there exists a critical value $\beta_c$ for the inverse temperature such that the normalized partition function has a non-degenerate limit for all $\beta\in [0,\beta_c]$ -- weak disorder holds -- while for $\beta\in (\beta_c,\infty)$ it converges exponentially fast to zero -- very strong disorder holds. This solves a twenty-years-old conjecture formulated by Comets, Yoshida, Carmona and Hu.
Our proof requires a technical assumption on the environment, namely, that it is bounded from above.
   \\[10pt]
  2010 \textit{Mathematics Subject Classification: 60K35, 60K37, 82B26, 82B27, 82B44.}\\
  \textit{Keywords: Disordered models, Directed polymers, Strong disorder.}
\end{abstract}

 \maketitle

\section{Introduction}

The \textit{directed polymer in a random environment} on $\bbZ^d$ is a statistical mechanics model, which has  been introduced, for $d=1$, in \cite{HH85} as a toy model to study the interfaces of the planar Ising model with random coupling constants. The model was shortly afterwards generalized to higher dimension 
(see for instance \cite{B89,IS88}). When $d\ge 2$, rather than an effective interface model, the directed polymer in a random environment can be interpreted as modeling the behavior of a stretched polymer in a solution with impurities.
The interest in the model, triggered by its rich phenomenology (including connections with the Stochastic heat equation and the Kardar-Parisi-Zhang (KPZ) equation, see \cite[Section 8]{C17}) has since then generated a plentiful literature in theoretical physics and mathematics.  We refer to \cite{C17,Zy24} for recent reviews on the subject.

\medskip

An important topic for the directed polymer is the so-called localization transition, which can be described as follows. At low temperature, the polymer trajectories are localized in a narrow corridor where the environment is most favorable \cite{B18,Bates21,BC20,CH02,CH06,CSY03}. At high temperature, the effect of the disorder disappears on large scale, and the rescaled polymer trajectories have the same scaling limit as the simple random walk, that is to say, standard Brownian motion \cite{B89,CY06,IS88,SZ96}.

\medskip

This transition can be defined in terms of the asymptotic behavior of the renormalized partition function of the model $W^{\beta}_n$ (see \eqref{defwn} in the next section). If $W^{\beta}_n$ converges to an almost surely positive limit $W^\beta_{\infty}$ we say that \textit{weak disorder} holds. On the other hand if $W^{\beta}_n$ converges to zero almost surely, we say that \textit{strong disorder} holds. It has been proved in \cite{CY06} that weak disorder implies that the distribution of the polymer  converges  after diffusive rescaling to that of standard Brownian motion (see also \cite{H24_1} for an alternative short and self-contained proof). While some localization results have been proved under the strong disorder assumptions, for instance in \cite{CH06}, much stronger characterizations of disorder-induced localization have been obtained under the stronger assumption that $W^{\beta}_n$ converges to zero
exponentially fast ($\lim_{n\to \infty} n^{-1} \log W^{\beta}_n<0$), see for instance \cite{B18,BC20}. This latter regime is known as the \textit{very strong disorder} regime.

\medskip

When $d=1$ or $2$ it has been proved that very strong disorder holds for every $\beta>0$ \cite{CV06,L10} (it was proved earlier in \cite{CH02,CSY03} that strong disorder holds for every $\beta>0$). On the other hand, when $d\ge 3$, it has been shown \cite{CY06} that there exist $\beta_c,\bar \beta_c \in(0,\infty]$ such that:
\begin{itemize}
 \item weak disorder holds for $\beta\in [0,\beta_c)$;
 \item strong disorder holds for $\beta>\beta_c$;
 \item very strong disorder holds if and only if $\beta>\bar \beta_c$.
\end{itemize}

It has been conjectured \cite{BC20,CH06,C17,CY06,L10,V23,Zy24} that $\beta_c=\bar\beta_c$. This conjecture implies that, at least in a mild sense, the two notions of strong disorder are equivalent. 
The present paper proves this conjecture and also establishes that weak disorder holds at $\beta_c$.

\medskip

Our result is an example of a \textit{sharp phase transition}, since $W^{\beta}_n$ either converges to some positive (random) limit -- if $\beta\in[0,\beta_c]$ -- or  decays exponentially fast -- if $\beta\in(\beta_c,\infty)$. This excludes the existence of an intermediate phase. This behavior is similar to that of the long-range connection probability for Bernoulli percolation \cite{AB87,Men86} and to that of spin-spin correlation for the Ising model \cite{ABF87}. We refer to \cite{DCT16} for a more recent shorter proof and to  \cite{DRT19} and to \cite{DGRST23} for recently obtained sharpness results concerning Potts model and Random interlacements, respectively.

  \medskip
 
Combined with the recent observations made on the tail distribution of $W^{\beta}_{\infty}$ in the weak disorder phase  \cite{J22,J23, J21_2, JL24_2}, our result provides a new perspective on the phase transition from weak to strong disorder for directed polymers when $d\ge 3$. The latter is, for the time being, is largely unchartered territory. Indeed, it our results identifies exactly the tail distribution of the limiting partition function at criticality $W^{\beta_c}_{\infty}$ (see Corollary~\ref{unplusd} and  \cite[Theorem 2.1]{JL24_2}). This information might allow an analysis of the behavior of the directed polymer at $\beta_c$ by identifying features that are specific to the model at criticality. Furthermore, it should help in controlling the rate of decay of $W^{\beta}_n$ for $\beta$ just slightly above $\beta_c$, hence establishing regularity estimates for the free energy at the vicinity of the critical point similar to those known to hold in dimensions $d=1$ and $d=2$ (see \cite{BL15,Naka19} for the sharpest established results).
 
\medskip

Our proof relies on two main technical innovations.  The first is to reduce the problem to a question about end-point localization. More precisely, we reduce the proof of the equivalence of strong and very strong disorder to  the following claim: when $W^{\beta}_n$ reaches its supremum, then, most likely, the endpoint polymer measure is localized (see Theorem~\ref{locaend}). This implication is obtained by chaining a variety of arguments. Some of these (e.g.\ fractional moments, spine constructions) have already been used in the study of directed polymer and disordered system, while others are novel. The reasoning is explained in Sections~\ref{prelim} to ~\ref{proofalsmostthesame}.
 The second innovation is the proof of this localization statement, based on discrete-time stochastic calculus. It is detailed in Sections~\ref{intermezzo}--\ref{discretos}.

\section{Model and results}

\subsection{Weak, strong and very strong disorder for directed polymer}
We let $X=(X_k)_{k\ge 0}$ be the nearest neighbor simple random walk on $\bbZ^d$ starting from the origin, and $P$ the associated probability distribution. It is the Markov chain satisfying $P(X_0=0)=1$  and
\begin{equation}\label{defDD}
P(X_{n+1}=y \ | \ X_n=y)=\frac{1}{2d}\ind_{\{|y-x|=1\}}=:D(x,y),
\end{equation}
where $|\cdot|$ denotes the $\ell_1$ distance on $\bbZ^d$.
Given a collection $\go=(\go_{k,x})_{k\ge 1, x\in \bbZ^d}$ of real-valued weights (the environment), $\beta\ge 0$ (the inverse temperature) and  $n\ge 1$ (the polymer length), we define the polymer measure $P^\beta_{\go,n}$
as a modification of the distribution $P$ which favors trajectories that visit sites where $\go$ is large.
More precisely,
to each path $\pi\colon \N\to\Z^d$ we associate an energy
	$H_n(\omega,\pi)\coloneqq \sum_{i=1}^n \omega_{i,\pi(i)},$
and we set
\begin{align}\label{eq:mu}
	P_{\omega,n}^\beta(\dd X)\coloneqq\frac{1}{Z_{\go,n}^\beta}e^{\beta H_n(\omega,X)}P(\dd X)  \quad  \text{ where } \quad Z_{\go,n}^\beta\coloneqq  E\left[  e^{\beta H_n(\omega,X)}\right].
\end{align}
The quantity $Z_{\go,n}^\beta$ is referred to as the partition function of the model.
In what follows, we assume that the environment $\go$ is given by a fixed realization of an i.i.d.\ random field on $\bbZ^d$ and we let $\bbP$ denote the associated probability distribution. We assume that $\go$ has finite exponential moments of all orders, that is
\begin{equation}\label{expomoment}
 \forall \beta\in \bbR, \ \gl(\beta)\coloneqq \log \bbE[ e^{\beta\go_{1,0}}]<\infty.
\end{equation}
The reader can readily check that  in this setup we have
$\bbE[Z_{\go,n}^\beta]= e^{n\gl(\beta)}.$
We thus define the normalized partition function $W_{n}^\beta$ (the dependence in $\go$ is dropped from the notation for convenience) by setting
\begin{equation}\label{defwn}
 W_{n}^\beta= \frac{Z_{\go,n}^\beta}{\bbE[Z_{\go,n}^\beta]}
 =E\left[  e^{\beta H_n(\omega,X)-n\gl(\beta)}\right].
\end{equation}
The normalized partition function  $W_{n}^\beta$ encodes essential information on the typical behavior of $X$ under $P^{\go}_{\go,n}$ and has thus been a central object of attention in polymer studies.
A crucial  observation made in  \cite{B89} is that $W^{\beta}_n$ is a non-negative martingale with respect to the natural filtration  $(\cF_n)$ associated with $\go$ ($\F_n\coloneqq \sigma((\omega_{k,x})_{k\leq n, x\in \bbZ^d})$) and hence converges almost surely.
Furthermore setting $W^{\beta}_{\infty}\coloneqq \lim_{n\to\infty} W^{\beta}_n$, it has been noticed  that
 $\P(W_\infty^\beta>0)\in\{0,1\}$, since the event belongs to the tail $\sigma$-algebra.

 \medskip

 When $W^\beta_{\infty}>0$ almost surely, we say that \textit{weak disorder} holds while if $\bbP( W^\beta_{\infty}=0)=1$, we say that \textit{strong disorder} holds.
A third notion related to the influence of disorder is associated with the free energy, which is defined as the asymptotic rate of decay of $W^{\beta}_n$,
\begin{equation}\label{freen}
	\f(\beta)\coloneqq \lim_{n\to\infty}\frac 1n \bbE\left[\log W_n^\beta\right]= \lim_{n\to\infty}\frac 1n\log W_n^\beta= \sup_{n\ge 1} \frac 1n \bbE\left[\log W_n^\beta\right].
\end{equation}
In the above, the second convergence holds $\bbP$-almost surely and the existence of the limits was established in \cite[Proposition 2.5]{CSY03}. Using  H\"older's inequality, it is easy to check that $\beta\mapsto \f(\beta)+\lambda(\beta)$ is convex and hence continuous. As a consequence $\beta\mapsto\f(\beta)$ is continuous.
Following a terminology introduced in \cite{CH06} we say that \textit{very strong disorder} holds if $\f(\beta)<0$.

\smallskip It has been shown in \cite{CY06} that the influence of the disorder \textit{increases} with $\beta$ in the following sense (in this statement $\beta_c$ and $\bar \beta_c$ depend of course on $d$ and on the distribution of $\go$).

 \begin{thmx}[{\cite[Theorem 3.2]{CY06}}]\label{thmx:CY}
 The following hold:
  \begin{itemize}
   \item [(a)]  There exists  a $\beta_c\in [0,\infty]$ such that weak disorder holds for $\beta< \beta_c$
   and strong disorder holds for $\beta>\beta_c$.
\item [(b)] The function $\beta \mapsto \f(\beta)$ is non-increasing. In particular, by continuity there exists a $\bar \beta_c\in[0,\infty]$ such that very strong disorder holds if and only if $\beta>\bar \beta_c$.
  \end{itemize}

 \end{thmx}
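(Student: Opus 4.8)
The plan is to establish (b) first, which is comparatively soft, and then (a), whose content --- the downward closedness of the weak disorder region --- is the genuinely delicate point.

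\smallskip

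For (b), I would show that for each fixed $n$ the map $\beta\mapsto\tfrac1n\bbE[\log W_n^\beta]$ is non-increasing on $[0,\infty)$; passing to the limit in \eqref{freen} then gives that $\f$ is non-increasing. For fixed $\go$ the function $\beta\mapsto\log W_n^\beta$ is smooth, and differentiating under $\bbE$ (legitimate by \eqref{expomoment}) gives, with $E_{\go,n}^\beta$ the expectation under $P_{\go,n}^\beta$,
\begin{equation}\label{eq:deriv-pf}
 \frac{\dd}{\dd\beta}\,\bbE\!\left[\log W_n^\beta\right]
 =\bbE\!\left[E_{\go,n}^\beta\!\left[H_n(\go,X)\right]\right]-n\gl'(\beta)
 =\sum_{i=1}^n\Bigl(\sum_{y\in\bbZ^d}\bbE\!\left[\go_{i,y}\,P_{\go,n}^\beta(X_i=y)\right]-\gl'(\beta)\Bigr).
\end{equation}
To handle the $i$-th summand I would condition on $\cG_i\coloneqq\sigma(\go_{j,z}\colon j\neq i)$: writing $a_y$ for the ($\cG_i$-measurable) weight that the polymer measure built from the levels $j\neq i$ alone assigns to $\{X_i=y\}$, one has $P_{\go,n}^\beta(X_i=y)=a_y e^{\beta\go_{i,y}}\big/\sum_z a_z e^{\beta\go_{i,z}}$, so that conditionally the summand equals $\bbE\bigl[(\sum_y a_y\go_{i,y}e^{\beta\go_{i,y}})/(\sum_y a_y e^{\beta\go_{i,y}})-\gl'(\beta)\,\big|\,\cG_i\bigr]$.

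\smallskip

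For a Gaussian environment, Gaussian integration by parts rewrites $\bbE[(\sum_y a_y\go_{i,y}e^{\beta\go_{i,y}})/(\sum_y a_y e^{\beta\go_{i,y}})\mid\cG_i]$ as $\gl'(\beta)\bigl(1-\bbE[\sum_y P_{\go,n}^\beta(X_i=y)^2\mid\cG_i]\bigr)$; since $\sum_y P_{\go,n}^\beta(X_i=y)^2$ is the probability that two independent polymer trajectories coincide at time $i$ (so it lies in $(0,1]$) and $\gl'(\beta)\ge0$, plugging back into \eqref{eq:deriv-pf} gives
\[
 \frac{\dd}{\dd\beta}\,\bbE\!\left[\log W_n^\beta\right]
 =-\,\gl'(\beta)\sum_{i=1}^n\bbE\!\left[\textstyle\sum_y P_{\go,n}^\beta(X_i=y)^2\right]\ \le\ 0 .
\]
(In the Gaussian case one can argue even more directly: writing $\beta_2\go\isDistr\beta_1\go+\sqrt{\beta_2^2-\beta_1^2}\,\eta$ with $\eta$ an independent copy, one identifies $W_n^{\beta_2}$ in law with $W_n^{\beta_1}\cdot E_{\go,n}^{\beta_1}[e^{\sqrt{\beta_2^2-\beta_1^2}\sum_i\eta_{i,X_i}-n(\beta_2^2-\beta_1^2)/2}]$, applies Jensen inside the logarithm and averages $\eta$ out, to conclude $\bbE[\log W_n^{\beta_2}]\le\bbE[\log W_n^{\beta_1}]$.) For a general environment the integration by parts is replaced by the scalar inequality
\[
 \bbE\!\left[\frac{\sum_y a_y\,\go_y\,e^{\beta\go_y}}{\sum_y a_y\,e^{\beta\go_y}}\right]\ \le\ \gl'(\beta)\qquad(\beta\ge0),
\]
valid for i.i.d.\ copies $(\go_y)$ and any probability weights $(a_y)$ --- equivalently $\mathrm{Cov}\bigl(\partial_\beta\log\sum_y a_y e^{\beta\go_y},\,\sum_y a_y e^{\beta\go_y}\bigr)\ge0$ --- which is the only genuinely computational point of (b); it again forces the right-hand side of \eqref{eq:deriv-pf} to be $\le0$. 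Hence $\f$ is non-increasing. As $\f(0)=0$, $\f\le0$ by Jensen, and $\f(\beta)+\gl(\beta)=\lim_n\tfrac1n\bbE[\log E[e^{\beta H_n}]]$ is convex in $\beta$ (a limit of the convex functions $\tfrac1n\bbE[\log E[e^{\beta H_n}]]$) so that $\f$ is a difference of two finite convex functions, hence continuous, the set $\{\beta\ge0\colon\f(\beta)=0\}$ is a closed interval containing $0$; putting $\bar\beta_c\coloneqq\sup\{\beta\ge0\colon\f(\beta)=0\}$ then gives $\f=0$ on $[0,\bar\beta_c]$ and $\f<0$ on $(\bar\beta_c,\infty)$, which is (b).

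\smallskip

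For (a), I would set $\beta_c\coloneqq\sup\{\beta\ge0\colon\text{weak disorder holds at }\beta\}$, with $\beta_c\coloneqq0$ when this set is empty (weak disorder holds trivially at $\beta=0$, where $W_n^0\equiv1$). Strong disorder for $\beta>\beta_c$ then holds by definition, so the whole content of (a) is that weak disorder at $\beta$ implies weak disorder at every $\beta'\in[0,\beta)$; by the $0$--$1$ law for $\{W_\infty^{\beta'}>0\}$ it suffices to deduce $\bbP(W_\infty^{\beta'}>0)>0$. The obvious routes fall short here: H\"older in the walk variable only gives the \emph{upper} bound $W_n^{\beta'}\le e^{cn}(W_n^\beta)^{\beta'/\beta}$ with $c=\tfrac{\beta'}{\beta}\gl(\beta)-\gl(\beta')\ge0$, whose spurious exponential factor makes it useless as a lower bound on $W_n^{\beta'}$, while the second-moment method establishes weak disorder only on the interval $[0,\beta_c^{L^2})$ determined by $E^{\otimes2}\bigl[\exp\bigl((\gl(2\beta)-2\gl(\beta))\sum_{i\ge1}\ind_{X_i=X_i'}\bigr)\bigr]<\infty$ (with $(X,X')$ two independent walks), and this need not exhaust $[0,\beta_c)$.

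\smallskip

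The way I would get past this --- and what I expect to be the main obstacle --- is to characterise weak disorder through a quantity that is \emph{manifestly non-decreasing} in $\beta$. The structural fact opening the door is that for every integer $k\ge1$ the map $\beta\mapsto\gl(k\beta)-k\gl(\beta)$ is non-decreasing on $[0,\infty)$, its derivative $k\bigl(\gl'(k\beta)-\gl'(\beta)\bigr)$ being $\ge0$ by convexity of $\gl$. One then wants to show that weak disorder at $\beta$ is equivalent to the finiteness of a functional of the simple random walk that depends on $\beta$ monotonically --- e.g.\ only through the non-decreasing constants $\gl(k\beta)-k\gl(\beta)$ --- which amounts to upgrading the plain second moment above (valid only on $[0,\beta_c^{L^2})$) to a size-biased or restricted second-moment computation performed along a well-chosen sub-collection of trajectories and remaining valid on all of $[0,\beta_c)$. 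Given such a characterisation, the monotonicity of the coupling constants forces $\{\beta\colon\text{weak disorder}\}$ to be an interval, so weak disorder at $\beta$ propagates to every $\beta'<\beta$, completing (a). (Combined with (b) this also records $\beta_c\le\bar\beta_c$, the inequality that the present paper upgrades to an equality.)
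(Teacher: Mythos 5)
This theorem is imported verbatim from \cite[Theorem 3.2]{CY06}; the present paper offers no proof of it, so there is nothing of the paper's own to compare your attempt against. Assessing the attempt on its merits:

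\textbf{Part (b).} The skeleton is right: differentiate $\beta\mapsto\frac1n\bbE[\log W_n^\beta]$, split over time levels, condition on $\cG_i=\sigma(\go_{j,\cdot}\colon j\neq i)$, and reduce the sign question to a one-level statement with $\cG_i$-measurable weights. That reduction is correct. The problem is that the statement you reduce to, namely
$\bbE\bigl[\sum_y a_y\go_y e^{\beta\go_y}\big/\sum_y a_y e^{\beta\go_y}\bigr]\le\gl'(\beta)$
for i.i.d.\ $(\go_y)$ and probability weights $(a_y)$, is \emph{asserted}, not proved. It is exactly the crux: as you note, the Gaussian integration-by-parts computation works, but it does not transfer to general environments, and the inequality is not an instance of FKG --- the function $\partial_\beta\log\sum_y a_y e^{\beta\go_y}$ is \emph{not} coordinatewise monotone in $\go$ (its $\go_z$-derivative is $p_z\bigl(1+\beta(\go_z-\sum_y p_y\go_y)\bigr)$, which changes sign), so the covariance reformulation does not yield to the obvious association inequalities. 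I could not find a counterexample, and the inequality is plausibly true, but a proof is genuinely required and you do not give one. The downstream reasoning (convexity of $\f+\gl$, continuity of $\f$, hence $\{\f=0\}$ is a closed interval containing $0$) is sound once monotonicity is granted, and the routine justification of differentiating under $\bbE$ and passing to the $n\to\infty$ limit is fine.

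\textbf{Part (a).} You set $\beta_c=\sup\{\beta\ge0\colon\text{weak disorder at }\beta\}$ and correctly observe that the only nontrivial content is that weak disorder propagates downward. You then correctly diagnose why the two obvious routes fail (path-space H\"older gives the wrong-sign comparison; the $L^2$ method only covers $[0,\beta_2)$) --- but the proposed fix is not carried out. You say you would ``characterise weak disorder through a quantity that is manifestly non-decreasing in $\beta$'' via a ``size-biased or restricted second-moment computation,'' without identifying that quantity, stating the characterisation, or proving either implication. This is the heart of part (a) and it is left entirely open. As written, part (a) is not a proof but a description of what a proof would have to accomplish.

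In summary, (b) has one concrete unproved lemma at its core, and (a) is missing its main argument. Both are genuine gaps. Since the paper cites \cite{CY06} rather than proving Theorem~\ref{thmx:CY}, the appropriate remedy here is simply to consult that reference rather than to reconstruct the proof.
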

\begin{remark} While it is possible to have $\beta_c=\infty$, it rather the exception than the rule. It has been shown in \cite[Theorem 2.3]{CSY03} that $\beta_c,\bar \beta_c<\infty$ whenever $\bbP[\go_{1,0}=\esssup \go_{1,0}]<\frac{1}{2d}$. 
      \end{remark}

\subsection{The conjecture}

 Very strong disorder implies that the normalized partition function converges to zero exponentially fast. Hence \textit{very strong disorder implies strong disorder} and in particular $\bar \beta_c \ge \beta_c$. 
 It has long been conjectured that we have $\bar \beta_c = \beta_c$ and thus that there is only one phase transition from
 weak disorder to very strong disorder. As far as directed polymers on $\bbZ^d$ are concerned, this has only been proved to be true for $d=1$ \cite{CV06}
 and $d=2$ \cite{L10} by showing that in both cases $\bar \beta_c=\beta_c=0$.
 
 \medskip
 
 The conjecture appeared first in \cite{CH06,CY06}, and has since been regularly mentioned as a challenging problem  in the polymer literature, see for instance  \cite{BC20,  C17, L10, LM10,  V23,Zy24}. In many of these references, the conjecture does not include a prediction on whether strong or weak disorder  holds at $\beta_c$. It is worth noting that there has been no consensus on the subject: for example, \cite{CH06} predicts weak disorder at $\beta_c$, while \cite[Conjecture 3.1]{C17} predicts strong disorder. In \cite{Zy24}, the matter was presented as an open question before the announcement of our result, with the comment ``\textit{Currently no concrete guess or heuristics exist about the answer to this question}''.

 \subsection{Main result}
 
We solve the above conjecture and further establish that weak disorder holds at $\beta_c$ when $d\ge 3$. Both results are obtained under the technical assumption that the environment is bounded from above, that is, that there exists $A\in \bbR$ such that
$\bbP[\go_{1,0}\le A]=1.$ This assumption is used in several steps of the proof, but the result and most of the argument carry through as long as \eqref{expomoment} is satisfied (see Section~\ref{new_lit}).

\begin{theorem}\label{main}
For any $d\ge 3$, if the environment is bounded from above, then strong disorder and very strong disorder are equivalent. In other words, the following hold:
\begin{itemize}
 \item [(a)] Equality of the critical points, that is $\beta_c=\bar \beta_c$.
 \item [(b)] If $\beta_c<\infty$, then weak disorder holds at criticality, that is $W^{\beta_c}_{\infty}>0$ $\bbP$-almost surely.
\end{itemize}
\end{theorem}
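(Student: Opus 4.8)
My plan is to reduce Theorem~\ref{main} to the single implication
\[
 \f(\beta)=0\ \Longrightarrow\ W^\beta_\infty>0 \ \ \bbP\text{-almost surely}, \qquad (\star)
\]
and then prove $(\star)$. Throughout I use the elementary bound $\f(\beta)\le 0$, which is Jensen's inequality $\bbE[\log W^\beta_n]\le\log\bbE[W^\beta_n]=0$. Granting $(\star)$, part~(a) follows: for $\beta>\beta_c$ strong disorder holds by Theorem~\ref{thmx:CY}(a), so $W^\beta_\infty=0$ a.s., hence $\f(\beta)\neq 0$ by the contrapositive of $(\star)$, hence $\f(\beta)<0$ and very strong disorder holds; thus $\bar\beta_c\le\beta_c$, and with the reverse inequality $\bar\beta_c\ge\beta_c$ recalled in the excerpt we get $\beta_c=\bar\beta_c$. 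For part~(b) we may assume $\bar\beta_c<\infty$ (otherwise $\beta_c=\infty$ and there is nothing to prove): by the defining property of $\bar\beta_c$ in Theorem~\ref{thmx:CY}(b), very strong disorder fails at $\beta=\bar\beta_c$, so $\f(\bar\beta_c)=0$; since $\beta_c=\bar\beta_c$ this gives $\f(\beta_c)=0$, and $(\star)$ yields $W^{\beta_c}_\infty>0$ a.s.

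I would prove $(\star)$ in contrapositive form: assuming strong disorder ($W^\beta_n\to 0$ almost surely), conclude $\f(\beta)<0$. It is enough to find $\theta\in(0,1)$ and $c>0$ with $\bbE[(W^\beta_n)^\theta]\le e^{-cn}$ for all large $n$, since then concavity of $\log$ gives $\theta\,\bbE[\log W^\beta_n]\le\log\bbE[(W^\beta_n)^\theta]\le -cn$, i.e.\ $\f(\beta)\le -c/\theta<0$. A cheap first input is available: for any fixed $\theta\in(0,1)$ the family $\{(W^\beta_n)^\theta\}_n$ is bounded in $L^{1/\theta}$ (because $\bbE[W^\beta_n]=1$), hence uniformly integrable, and $(W^\beta_n)^\theta\to 0$ a.s.\ under strong disorder, so $\bbE[(W^\beta_n)^\theta]\to 0$; thus for any prescribed $\eta>0$ there is a scale $N$ with $\bbE[(W^\beta_N)^\theta]\le\eta$. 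The plan is then to \emph{promote} this single-scale smallness to exponential decay by a coarse-graining argument in the spirit of \cite{L10}: decompose $W^\beta_{kN}$ according to the walk's positions at the times $N,2N,\dots,(k-1)N$, so that it becomes a sum over ``block-paths'' of products of independent normalized point-to-point partition functions over the $k$ successive time-slabs of length $N$; apply subadditivity of $t\mapsto t^\theta$ and independence; restrict the walk to diffusive spatial windows in each slab (losing only a fraction $e^{-\Theta(M^2)}$ of the mass and incurring a combinatorial factor that grows exponentially in $k$ only, after coarse-graining space into blocks of side $\asymp\sqrt N$); and close a contracting recursion $\bbE[(W^\beta_{kN})^\theta]\le(\text{small})^{k}$ provided one has a suitable \emph{one-slab estimate}.

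The crux, and where the hypothesis $\go\le A$ must enter, is exactly this one-slab estimate — and this is the step I expect to be the main obstacle, the reason the statement is genuinely hard only for $d\ge3$ near $\beta_c$. Applying the fractional-moment bound to a single slab produces, for a point-to-point partition function $\hat W(0,y)$ of mean $p_N(y):=P(X_N=y)$, the factor $\sum_y\bbE[\hat W(0,y)^\theta]\le\big(\sum_y p_N(y)^\theta\big)\,\sup_y\bbE[\bar W(0,y)^\theta]$, where $\bar W(0,y):=\hat W(0,y)/p_N(y)$ has mean one; and by the local limit theorem $\sum_y p_N(y)^\theta\asymp N^{d(1-\theta)/2}\to\infty$. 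Smallness of $\bbE[(W^\beta_N)^\theta]$ only constrains the \emph{total} mass $\sum_y\hat W(0,y)$, not the individual $\bar W(0,y)$, so it cannot by itself beat this entropy factor: a priori, strong disorder could be produced by the polymer's endpoint law spreading diffusely over $\asymp N^{d/2}$ sites while each point-to-point partition function stays of order one, in which case the fractional-moment recursion does not contract. Excluding this ``dissipation without exponential loss'' is what $\go\le A$ must buy. Concretely, I expect one has to replace the plain fractional moment by a \emph{second}-moment estimate on a favorable event: split $W^\beta_N=W^{\beta,\mathrm{g}}_N+W^{\beta,\mathrm{b}}_N$ along an event on which the upper bound yields sub-Gaussian control and hence a finite second moment of the truncated partition function (even though the full model is past its $L^2$-phase), use $\go\le A$ together with strong disorder to make $\bbE[W^{\beta,\mathrm{b}}_N]$ small, and run the renormalization on the good part, where a second-moment one-slab bound \emph{does} control the spatial spread of the endpoint and lets the recursion contract once $N$ is large. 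Everything else — the reduction, the uniform-integrability input, and the block combinatorics — is routine; the whole content is converting the qualitative statement $W^\beta_n\to0$ into this quantitative, renormalizable one-slab estimate beyond the $L^2$-phase, and it is there that boundedness of the environment from above is indispensable.
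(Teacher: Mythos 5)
Your reduction to the single implication $(\star)$ is correct and identical to the paper's (the paper runs the same argument backwards through Theorem~\ref{thmx:CY}). You also correctly identify the crux: the naive fractional-moment coarse-graining fails because the entropy factor $\sum_y p_N(y)^\theta\asymp N^{d(1-\theta)/2}$ diverges, and a priori strong disorder could be produced by a diffusely spreading endpoint with no point-to-point partition function becoming large. However, the fix you propose — a truncated second-moment bound on a ``good'' part of the partition function that tames the endpoint spread — is left entirely undeveloped, and I do not see how it can work: past the $L^2$-critical point there is no decomposition into a good piece with a finite, contracting second moment and a bad piece of small mean, and indeed making the $L^2$-moment finite again by truncation requires cutting off precisely the high-energy corridors that carry the localization. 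You acknowledge this is ``where I expect the main obstacle to be''; that obstacle is not resolved, so the proposal has a genuine gap at its core.

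The paper's route is genuinely different and sidesteps the one-slab recursion altogether. Instead of trying to contract $\bbE[(W^\beta_N)^\theta]$ scale by scale, it invokes a finite-volume criterion (Proposition~\ref{finitevol}): $\bbE[\sqrt{W_n}]<(2n+1)^{-d}$ for a single $n$ already forces $\f(\beta)<0$ — the entropy cost $(2n+1)^d$ is paid once, not iterated. The quantity $\bbE[\sqrt{W_n}]$ is then controlled by a size-biasing decoupling (Lemma~\ref{babac}): $\bbE[\sqrt{W_n}]\le\sqrt{\bbP(A)}+\sqrt{\tilde\bbP_n(A^\complement)}$, where $\tilde\bbP_n$ is the size-biased law, which has an explicit spine representation (Lemma~\ref{sbrepresent}). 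The event $A=A_n$ is the existence, within the space-time box $\lint 1,n\rint\times\lint-n,n\rint^d$, of a \emph{good starting point}: a point $(m,y)$ such that the shifted partition function $\theta_{m,y}W^\beta_s$ already exceeds $n^{4d}$ after only $s=O(\log n)$ steps. By Markov's inequality this is polynomially rare under $\bbP$; under the spine representation it is overwhelmingly likely, because at each scale $s$ the environment seen from the spine is size-biased and one only needs the lower-tail estimate of Proposition~\ref{almostthesame}. That lower-tail estimate, in turn, rests on the technical heart of the paper, Theorem~\ref{locaend}: under strong disorder $\bbP(\max_{n,x}\hat W^\beta_n(x)\ge u)\ge c/u$. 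This is proved by optional stopping for $W^\beta_n$ (giving the $1/u$ tail for $\max_n W^\beta_n$, Lemma~\ref{dull}) combined with a martingale analysis of an auxiliary bounded process $J_n=(\mu^\beta_n,G_0\mu^\beta_n)$ built from a truncated Green's function: if the integrated replica overlap $\sum I_k$ is large between $\tau_u$ and $\tau_{Ku}$ but $I_k$ is never pointwise large, the Doob decomposition of $J_n$ has an unbounded drift that its bounded martingale part cannot compensate (Lemma~\ref{largedevdis}), forcing endpoint localization somewhere in the window — hence a large point-to-point partition function. This is where $\go\le A$ enters the paper: it makes $\log W^\beta_n$ a supermartingale with bounded increments (cf.\ \eqref{fromabove}), which is exactly what the optional stopping and concentration steps need. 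So while you correctly located both the difficulty and the place where the boundedness hypothesis must intervene, your sketch does not reach the mechanism (optional stopping plus the $J$-process) that actually converts ``strong disorder'' into the quantitative tail estimate.
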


Let us compare our theorem with related results in the literature. It was shown in \cite{BP93} that $\beta_c=\bar \beta_c$ for directed polymer on trees (introduced earlier in \cite{kahane74, Man74} as multiplicative cascades). More precisely, for this model the explicit value of $\beta_c$ can be computed, see \cite[Theorem 1]{kahane74}, and the same result implies that \textit{strong disorder} holds at $\beta_c$, in contrast with item $(b)$ in our theorem.

\smallskip In \cite{LM10}, the equivalence of strong and very strong disorder (including the fact that weak disorder holds at $\beta_c$) is established for a hierarchical version of the model defined on recursively constructed ``diamond'' lattices. Hierarchical versions of models in statistical mechanics have been used by physicists as a non-mean-field simplification of models defined on $\bbZ^d$ (see for instance \cite{CD89,DIL84}).
The big advantage of a hierarchical model is that the partition function satisfies a simple recursion. The equivalence of strong and very strong disorder in this setup is a rather direct consequence of this recursion (see \cite[Proposition 5.1]{LM10}).

\subsection{A consequence for the partition function at $\beta_c$}\label{consequencesec}
Let us discuss shortly here a consequence of our result.
Let us define $p^*(\beta)$ as 
\begin{equation}
 p^*(\beta):=\sup\left\{p>0 \colon \sup_{n\ge 1} \bbE\left[(W^{\beta}_n)^p\right]<\infty \right\}.
\end{equation}
It is easy to check that we have $p^*(\beta)=1$ in the strong disorder phase. 
In the weak disorder phase we have the following alternative characterization
$$  p^*(\beta)=\sup\left\{p>0 \colon  \bbE\left[(W^{\beta}_\infty)^p\right]<\infty \right\}.$$
It has been proved (see \cite[Corollary 1.3]{J22} and \cite[Corollary~2.8(ii)]{JL24_2}) that we have $p^*(\beta)\ge 1+\frac{2}{d}$ in the weak disorder phase.
It has further been shown  that $\beta\mapsto p^*(\beta)$ is right-continuous  at points for which $p^*(\beta)\in \left(1+\frac{2}{d},2\right]$
(see \cite[Theorem 1.2]{J23}) and left-continuous at points which are such that $p^*(\beta)>1$ (see \cite[Corollary~2.11(i)]{JL24_2}).
The combination of these observations allows to identify the value of $p^*(\beta)$ at the critical point.
\begin{corollary}\label{unplusd}
For every $d\ge 3$, we have 
\begin{equation} \lim_{\beta\uparrow \beta_c}p^*(\beta)= p^*(\beta_c)=1+\frac{2}{d}.\end{equation} 
 
\end{corollary}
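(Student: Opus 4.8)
The plan is to combine the three cited facts with Theorem~\ref{main}. First, since $\go$ is bounded from above, \cite[Corollary 1.3]{J22} gives $p^*(\beta)\ge 1+\tfrac 2d$ throughout the weak disorder phase, i.e.\ for all $\beta<\beta_c$; by Theorem~\ref{main}(b) weak disorder also holds at $\beta_c$, so the same corollary applies at $\beta=\beta_c$ and yields $p^*(\beta_c)\ge 1+\tfrac 2d$. It remains to prove the matching upper bound $p^*(\beta_c)\le 1+\tfrac 2d$ and the fact that the limit from the left coincides with the value at $\beta_c$.

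For the upper bound, I would argue by contradiction: suppose $p^*(\beta_c)>1+\tfrac 2d$. Since $\beta_c>\beta_2$ by Theorem~\ref{b2bcrit}, the left-continuity statement of \cite[Theorem 1.2]{J23} applies at $\beta_c$ (it holds at every $\beta>\beta_2$ with $p^*(\beta)>1$, which is the case here since we are in weak disorder at $\beta_c$), so $\lim_{\beta\uparrow\beta_c}p^*(\beta)=p^*(\beta_c)>1+\tfrac 2d$. Hence there is $\beta'<\beta_c$ with $p^*(\beta')\in(1+\tfrac 2d,\, p^*(\beta_c)]$; if necessary shrink to ensure $p^*(\beta')\le 2$ (possible because $\beta\mapsto p^*(\beta)$ is non-increasing in $\beta$ on the weak disorder phase and $p^*\to\infty$ only as $\beta\downarrow 0$, so values just below $\beta_c$ are bounded — more carefully, one takes $\beta'$ close enough to $\beta_c$ that $p^*(\beta')\le 2$, using that $p^*(\beta_c)$ is finite, which itself follows once the desired equality is known; to avoid circularity one instead directly invokes that $p^*(\beta)\le 2$ for $\beta$ in a left-neighbourhood of $\beta_c$, a consequence of $\beta_c>\beta_2$ and the definition of $\beta_2$). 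Then the right-continuity statement of \cite[Theorem 1.2]{J23}, valid at points where $p^*\in(1+\tfrac 2d,2]$, gives $p^*$ continuous at $\beta'$; iterating (or rather, using that the set $\{\beta: p^*(\beta)>1+\tfrac2d\}$ is then both open to the right and closed, by the two one-sided continuity properties, hence all of $[0,\beta_c]$) contradicts the fact that $p^*$ must reach the floor value $1+\tfrac2d$ somewhere below $\beta_c$ — indeed $p^*(\beta)\downarrow$ some limit as $\beta\uparrow\beta_c$ and that limit is $\ge 1+\tfrac2d$ but strict inequality is incompatible with left-continuity at $\beta_c$ once we know $p^*(\beta_c)=1+\tfrac2d$. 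The cleanest route, which I would write out, is: by left-continuity at $\beta_c$ (applicable since $\beta_c>\beta_2$ and $p^*(\beta_c)>1$), $\lim_{\beta\uparrow\beta_c}p^*(\beta)=p^*(\beta_c)$; and a separate soft argument using right-continuity shows the common value cannot exceed $1+\tfrac2d$, because otherwise $p^*$ would stay strictly above $1+\tfrac2d$ on a left-neighbourhood of $\beta_c$ and, by the open-closed dichotomy, on all of $[0,\beta_c]$, contradicting $p^*(\beta)\to 1$ as $\beta\to\beta_c^-$ would be false — instead one uses that $p^*$ takes the value $1+2/d$ as the infimum over the weak disorder phase.

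The honest statement of the main obstacle: the only genuinely new input is Theorem~\ref{main}(b), i.e.\ that weak disorder holds at $\beta_c$, which unlocks the characterization $p^*(\beta_c)=\sup\{p>0:\bbE[(W^{\beta_c}_\infty)^p]<\infty\}$ and the applicability of \cite{J22,J23} at the endpoint; everything else is bookkeeping with the monotonicity and the one-sided continuity results. The subtle point to get right is the interplay of the two continuity statements of \cite[Theorem 1.2]{J23}, whose hypotheses involve the \emph{value} of $p^*$ at the point in question — one must verify that along the relevant sequence of temperatures the value stays in the admissible window $(1+\tfrac2d,2]$, which is where the lower bound $p^*\ge 1+\tfrac2d$ from \cite{J22} and the ordering $\beta_c>\beta_2$ both get used. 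I would present the argument as: (i) weak disorder at $\beta_c$ plus \cite{J22} gives $p^*(\beta_c)\ge 1+\tfrac2d$ and $\lim_{\beta\uparrow\beta_c}p^*(\beta)\ge 1+\tfrac2d$; (ii) left-continuity at $\beta_c$ gives $\lim_{\beta\uparrow\beta_c}p^*(\beta)=p^*(\beta_c)$; (iii) a short argument combining right- and left-continuity rules out $p^*(\beta_c)>1+\tfrac2d$, giving equality in both places.
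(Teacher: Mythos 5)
Your outline has the right ingredients (Theorem~\ref{main}(b) for weak disorder at $\beta_c$, \cite[Corollary~1.3]{J22} for the lower bound, \cite[Theorem~1.2]{J23} and Theorem~\ref{b2bcrit} for the continuity statements), and step (i) and the left-continuity claim in step (ii) are correct. But step (iii) — ruling out $p^*(\beta_c)>1+\tfrac 2d$ — is where the argument goes astray, and it is in fact the heart of the corollary.

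The contradiction you need comes from the \emph{right} side of $\beta_c$, not from any left-neighbourhood. Since strong disorder holds for every $\beta>\beta_c$, one has $p^*(\beta)=1$ there, so $\lim_{\beta\downarrow\beta_c}p^*(\beta)=1$. On the other hand $p^*(\beta_c)\ge 1+\tfrac2d$ and, by monotonicity of $p^*$ together with $\beta_c>\beta_2$ and the definition of $\beta_2$, $p^*(\beta_c)\le p^*(\beta_2)\le 2$. If $p^*(\beta_c)$ lay in $(1+\tfrac2d,2]$, the right-continuity half of \cite[Theorem~1.2]{J23} would apply at $\beta_c$ and force $\lim_{\beta\downarrow\beta_c}p^*(\beta)=p^*(\beta_c)>1$, contradicting the jump to $1$. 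Hence $p^*(\beta_c)\notin(1+\tfrac2d,2]$, and combined with the two-sided bound $1+\tfrac2d\le p^*(\beta_c)\le 2$ this pins down $p^*(\beta_c)=1+\tfrac2d$. Your write-up instead tries to find a $\beta'<\beta_c$, invokes an ``open-closed dichotomy'' on $[0,\beta_c]$, and appeals to ``$p^*$ must reach the floor value somewhere below $\beta_c$'' (which is not established and is not needed), and at one point explicitly assumes ``once we know $p^*(\beta_c)=1+\tfrac2d$'' — i.e.\ the conclusion. In particular you never actually use the decisive fact that $p^*\equiv 1$ on $(\beta_c,\infty)$. Replacing step (iii) by the one-line right-limit contradiction above, and then concluding left-continuity exactly as you do in (ii), recovers the paper's proof.
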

\begin{proof}
 From a simple $L^2$-computation (see Section~\ref{l2consi}) we can check that $p^*(\beta_c)\le 2$. On the other hand, since weak disorder holds at $\beta_c$ we have  $p^*(\beta_c)\ge 1+\frac{2}{d}$ by \cite[Corollary~2.8(ii)]{JL24_2}. In particular, since  $p^*(\beta)=1$ for $\beta>\beta_c$, $\beta\mapsto p^*(\beta)$ is discontinuous to the right of $\beta_c$. From \cite[Theorem 1.2]{J23}, this discontinuity implies that $p^*(\beta_c)\notin (1+\frac{2}{d},2]$. Thus the only possible value is $p^*(\beta_c)=1+\frac{2}{d}$. Continuity from the left at $\beta_c$ is then also a consequence of  \cite[Corollary~2.11(i)]{JL24_2}.
\end{proof}

\subsection{Literature update}\label{new_lit}

As a conclusion to this introduction we mention related results that have been obtained after the first version of this manuscript appeared online. 

\subsubsection{Extensions of  Theorem~\ref{main}}
 In \cite{JL25}, the validity of Theorem~\ref{main} has been extended in two directions. Firstly, we have removed the assumption that $\go$ is bounded from above, extending the result to all environment satisfying \eqref{expomoment}. We also consider more general $\bbZ^d$-valued random walks $(X_n)_{n\ge 1}$ with i.i.d.\ increments and such that $E[|X_1|^{\eta}]<\infty$ for some $\eta>0$. In this more general framework, we establish that $\beta_c=\bar \beta_c$, but could not prove in full generality that weak disorder holds at $\beta_c$. This mostly because we lack an adequate extension of Theorem~\ref{b2bcrit} (we refer to \cite[Proposition~2.3 and Remark~2.4]{JL25} for more details).

 \medskip
 
One of the key steps to prove  Theorem~\ref{main} for general environment is to ensure  that Lemma~\ref{dull} remains valid for unbounded environments. This was achieved in a separate work, see \cite[Proposition 2.3]{JL24_2}.

\subsubsection{Regularity of the phase transition}

The combination of Corollary~\ref{unplusd} with  \cite[Theorem~2.1]{JL24_2} (which appeared a couple of months after this manuscript) provides very precise information concerning the distribution of $W^{\beta_c}_{\infty}$. More precisely, we have  $\bbP( W^{\beta_c}_{\infty}>u) \asymp u^{-1-\frac{2}{d}}$ as $u\to \infty$. In \cite{H25}, this information is used as starting point to investigate the regularity of the free-energy $\f(\beta)$ in the vicinity of the critical point. It is proved there that the transition is smooth in the sense that
\begin{equation}\label{smoooth}
\lim_{\beta \downarrow \beta_c} \frac{\log(|\f(\beta-\beta_c)|)}{\log (\beta-\beta_c)}=\infty.
\end{equation}
In words, $\f(\beta)$ is smaller than any power of $(\beta-\beta_c)$ in the neighborhood of $\beta_c$. It remains a challenge to obtain more quantitative version of \eqref{smoooth}.

\subsection{Organization of the paper}

Before giving a complete overview about the organization of the proof, we need to introduce a couple of technical elements. In Section~\ref{prelim}, we explain how the result can be deduced by bounding the probability of a well-chosen event under both the original law of the environment $\P$ and under the \textit{size-biased} measure. In Section~\ref{orgamain}, we introduce the event whose probability needs to be controlled and provide a backbone for the proof, introducing all the relevant technical estimates. At the end of Section~\ref{orgamain}, we explain how the remainder of the paper is organized.

\section{Preliminaries}\label{prelim}

\subsection{Notation}
 The point-to-point partition function $\hat W^{\beta}_n(x)$ is defined as
\begin{equation}
 \hat W^{\beta}_n(x)\coloneqq   E\left[  e^{\beta H_n(\omega,X)}\ind_{\{X_n=x\}}\right].
\end{equation}
The endpoint distribution of the polymer $\mu^{\beta}_{n}$ is defined as a  probability on $\bbZ^d$ as follows
\begin{equation}\label{endpoint}
\mu^{\beta}_{n}(x)\coloneqq P^{\beta}_{\go,n}(X_n=x)= \frac{ \hat W^{\beta}_n(x)}{ W^{\beta}_n}.
\end{equation}
The distribution of $X_n$ under the measure $P^{\beta}_{n-1}$ also appears naturally in computations. It can be expressed as a function of $\mu^{\beta}_{n-1}$. We have (recall \eqref{defDD} and observe that $D$ is symmetric)
\begin{equation}
P^{\beta}_{\go,n-1}(X_n=x)= \sum_{y\in \bbZ^d} \mu^{\beta}_{n-1}(y) D(y,x)= (D  \mu^{\beta}_{n-1}) (x).
\end{equation}
We define the end-point replica overlap  by
\begin{equation}\label{overlap}
 I_n\coloneqq  \sum_{x\in \bbZ^d}D  \mu^{\beta}_{n-1} (x)^2.
\end{equation}
The quantity $I_n$ appears when computing the conditional variance of the partition function,
\begin{equation}\begin{split}\label{condvar}
 \bbE\left[ (W^{\beta}_{n}-W^{\beta}_{n-1})^2 \ \middle| \ \cF_n \right]&= (W^{\beta}_{n-1})^2 \bbE\left[ \left(\frac{W_n}{W_{n-1}}\right)^2-1 \ \middle| \ \cF_n \right] \\
 &=(W^{\beta}_{n-1})^2  \bbE\left[ (E^{\beta}_{\go,n-1})^{\otimes 2} 
 \left[ e^{\beta\go(n,X^{(1)}_n)+ \beta\go(n,X^{(2)}_n)-2\gl(\beta)}-1\right]\right]\\
 &= \chi(\beta) (W^{\beta}_{n-1})^2 I_n,
 \end{split}\end{equation}
 where $\chi(\beta)\coloneqq e^{\gl(2\beta)-2\gl(\beta)}-1$ is the variance of $e^{\beta \go_{1,0}-\gl(\beta)}$.
We set
 $L\coloneqq \esssup( e^{\beta \go_{1,0}-\gl(\beta)})$ (recall that $L<\infty$ by assumption).
Note that for every $n\ge 0$ we have, almost surely,
\begin{equation}\label{fromabove}
 W^{\beta}_{n+1}\le LW^{\beta}_{n}.
\end{equation}
Finally, given $m\ge 1$ and $y\in \bbZ^d$, we define $\theta_{m,y}$ to be the shift operator acting on $\go$. The environment  $\theta_{m,y}\  \go$ is a field indexed by $\bbN\times \bbZ^d$ defined by
\begin{equation}\label{shift1}
 (\theta_{m,y}\  \go)_{n,x}\coloneqq  \go_{n+m, x+y}.
\end{equation}
We also let $\theta_{m,y}$ act on random variables and events that are defined in terms of $\go$ by setting
\begin{equation}\label{shift2}
 \theta_{m,y}\  f(\go)\coloneqq  f(\theta_{m,y}\  \go) \quad  \text{ and } \quad \theta_{m,y} A\coloneqq  \{ \go \ \colon \ \theta_{m,y}\  \go\in A\}.
\end{equation}

\subsection{The $L^2$-regime}\label{l2consi}
In \cite{B89}, the existence of a weak disorder phase is obtained via a  second moment  computation which we briefly sketch. 
Using Fubini's theorem and  monotone convergence,  we have

\begin{equation}
\lim_{n\to \infty}\bbE\left[ (W^{\beta}_n)^2\right]= E^{\otimes 2} \left[ e^{(\gl(2\beta)-2\gl(\beta))L(X^{(1)},X^{(2)})}\right],
\end{equation}
where $X^{(1)}$ and $X^{(2)}$ are two independent copies of the random walk $X$ and $L$ denotes the intersection time between the two trajectories
$ L(X^{(1)},X^{(2)})=\sum_{k=1}^{\infty} \ind{\{X_k^{(1)}=X_k^{(2)}\}}.$
By the Markov property, $L(X^{(1)},X^{(2)})$ is a geometric random variable. Hence the above limit is finite if and only if
\begin{equation}
 e^{\gl(2\beta)-2\gl(\beta)}< \frac{1}{P^{\otimes 2}[\exists n\ge 1, X^{(1)}_n=X^{(2)}_n] }=  \frac{1}{E^{\otimes 2}\left[L_\infty(X^{(1)},X^{(2)}) \right]}+1.
\end{equation}
Hence $(W^\beta_n)$ is bounded in $L^2$ if and only if $\beta< \beta_2$, where (recall that $\chi(\beta)\coloneqq e^{\gl(2\beta)-2\gl(\beta)}-1)$
\begin{equation}\label{defbeta2}
\beta_2\coloneqq \sup\left\{\beta \ \colon \chi(\beta) E^{\otimes 2}\big[L(X^{(1)},X^{(2)}\}\big]< 1\right\}\in(0,\infty].
\end{equation}An important observation is that since $L^2$-boundedness implies that  $W^{\beta}_n$ converges in $L^2$, we have $\beta_c \ge \beta_2$. In particular we have $\beta_c>0$ for $d\ge 3$ and, using the terminology introduced in Section~\ref{consequencesec},  $p^*(\beta_c)\le p^*(\beta_2)\le 2$.
In order to prove part $(b)$ of our main result -- weak disorder at criticality -- we need to ensure that weak disorder holds at $\beta_2$, which is a consequence of  the following stronger result.

\begin{thmx}\label{b2bcrit}
In dimension $d\ge 3$, we have $\beta_c>\beta_2$. In particular, if strong disorder holds at $\beta$, then $\beta>\beta_2$.
\end{thmx}

The proof of Theorem~\ref{b2bcrit} is presented in \cite[Section 1.4]{BS10} in the case when $d\ge 4$. When $d=3$, as observed in \cite[Remark 5.2]{C17} and \cite[Section 3.2]{Zy24}, Theorem~\ref{b2bcrit} can easily be  obtained by combining the content of \cite{BT10} with the connection made in \cite{B04} between directed polymer and the random walk pinning model (an alternative approach, also mentioned in \cite{BS10,C17} is to exploit a similar connection between directed polymer and disordered renewal pinning model and use results from \cite{GLT11}).
For the convenience of the reader we provide in Appendix~\ref{gapfilling} a guideline on how to deduce Theorem~\ref{b2bcrit} from the content of \cite{BT10}.

\subsection{A finite volume criterion relying on fractional moments}

By Jensen's inequality we have (we omit the superscript $\beta$ when writing square roots to lighten the notation)
\begin{equation}\label{tops}
 \bbE[ \log W_n]=  \bbE\left[ 2\log \sqrt{W_n}\right]\le 2 \log \bbE\left[ \sqrt{W_n}\right].
\end{equation}
Hence, to show that very strong disorder holds, it is sufficient to show that $\bbE\left[ \sqrt{W_n}\right]$ decays exponentially fast in $n$.
From \cite{CV06}, we see that exponential decay holds as soon as $\bbE\left[ \sqrt{W_n}\right]$ is sufficiently small for  some $n$, or more precisely:

\begin{proposition}\label{finitevol}
 If there exists  an $n$  such that
\begin{equation}
 \bbE\left[ \sqrt{W_n}\right]\leq \frac 1{\sqrt 2(2n+1)^{d}},
\end{equation}
then very strong disorder holds,  or more precisely $\f(\beta)\leq -\frac{\log 2}{n}$.
\end{proposition}
The result is an immediate consequence of \cite[Theorem 3.3]{CV06}. We include a proof for the convenience of the reader. \begin{proof}
From \eqref{tops}, we have
\begin{equation}\label{keyz}
 \f(\beta)\le \limsup_{m\to \infty} \frac{2}{nm} \log \bbE\left[ \sqrt{W_{nm}}\right].
\end{equation}
  Given $x_1,\dots,x_m\in \bbZ^d$, we let $\hat W_{nm}(x_1,x_2,\dots,x_m)$ denote the contribution to the partition function of
 trajectories that go through $x_1,x_2,\dots, x_m$ at times $n,2n,\dots, nm$, that is
$$\hat W_{nm}(x_1,x_2,\dots,x_m)\coloneqq  \bbE\left[ e^{H_{nm}(\go,X)}\ind_{\{\forall i\in \lint 1,m\rint, S_{ni}=x_i\}}\right]$$
Using the inequality $\sqrt{ \sum_{i\in I} a_i}\le \sum_{i\in I} \sqrt{a_i}$, which is valid for an arbitrary collection of non-negative numbers $(a_i)_{i\in I}$, we obtain
\begin{equation}\begin{split}\label{factor}
 \bbE\left[ \sqrt{W_{nm}}\right]&\le \sum_{(x_1,\dots,x_m)\in (\bbZ^d)^m} \bbE\left[\sqrt{\hat W_{nm}(x_1,x_2,\dots,x_m)} \right]\\
 &= \sum_{(x_1,\dots,x_m)\in (\bbZ^d)^m} \prod_{i=1}^m
 \bbE\left[\sqrt{\hat W_{n}(x_{i}-x_{i-1})} \right]\\
 &=\left(\sum_{x\in \bbZ^d}\bbE\left[ \sqrt{\hat W_n(x)}\right]\right)^m,
\end{split}\end{equation}
where the factorization in the  second line is obtained by combining the Markov property for the random walk with the independence of the environment. The equality in the last line follows from a rearrangement of the terms.
Now, since $\hat W_n(x)\le W_n$, we have
\begin{equation}\label{discase}
 \sum_{x\in \bbZ^d} \bbE\left[ \sqrt{\hat W_n(x)}\right]= \sum_{|x|\le n} \bbE\left[ \sqrt{\hat W_n(x)}\right] \le (2n+1)^d \bbE\left[ \sqrt{W_n}\right]
\end{equation}
and hence combining \eqref{keyz}-\eqref{factor} and \eqref{discase} we obtain that
\begin{equation}
  \f(\beta)\le \frac{2}{n} \log\left( (2n+1)^d \bbE\left[ \sqrt{W_n}\right]\right),
\end{equation}
and hence the result.\end{proof}

\subsection{Fractional moment and size-biasing}\label{sizeb}

We consider the size-biased measure for $\go$ defined by
$ \tilde \bbP_n(\dd \go)\coloneqq  W^{\beta}_n\bbP(\dd \go).$
The following provides a simple manner to bound   $\bbE\left[ \sqrt{W_n}\right]$.
\begin{lemma}\label{babac}

 For any measurable event $A$, it holds that
 \begin{equation}
  \bbE\left[ \sqrt{W_n}\right] \le \sqrt{\bbP(A)} + \sqrt{\tilde \bbP_n(A^{\complement})},
\end{equation}

\end{lemma}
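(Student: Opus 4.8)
The plan is to prove the bound by splitting the expectation $\bbE[\sqrt{W^\beta_n}]$ over the event $A$ and its complement, and bounding each piece by a Cauchy--Schwarz argument that exploits the defining property of the size-biased measure $\tilde\bbP_n(\dd\go)=W^\beta_n\,\bbP(\dd\go)$.

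First I would write
\begin{equation}
 \bbE\left[\sqrt{W^\beta_n}\right] = \bbE\left[\sqrt{W^\beta_n}\,\ind_A\right] + \bbE\left[\sqrt{W^\beta_n}\,\ind_{A^\complement}\right].
\end{equation}
For the first term, apply the Cauchy--Schwarz inequality with the trivial factorization $\sqrt{W^\beta_n}\,\ind_A = \sqrt{W^\beta_n}\cdot\ind_A$ to get
\begin{equation}
 \bbE\left[\sqrt{W^\beta_n}\,\ind_A\right] \le \sqrt{\bbE[W^\beta_n]}\,\sqrt{\bbE[\ind_A]} = \sqrt{\bbP(A)},
\end{equation}
using that $\bbE[W^\beta_n]=1$ by \eqref{defwn}. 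For the second term, the point is that $\sqrt{W^\beta_n} = W^\beta_n \cdot (W^\beta_n)^{-1/2}$, at least on the event $\{W^\beta_n>0\}$; on $\{W^\beta_n=0\}$ the integrand $\sqrt{W^\beta_n}\,\ind_{A^\complement}$ vanishes anyway, so it is harmless to restrict attention to $\{W^\beta_n>0\}$. Then by Cauchy--Schwarz again,
\begin{equation}
 \bbE\left[\sqrt{W^\beta_n}\,\ind_{A^\complement}\ind_{\{W^\beta_n>0\}}\right] = \bbE\left[\sqrt{W^\beta_n}\,(W^\beta_n)^{-1/2}\cdot W^\beta_n\,\ind_{A^\complement}\ind_{\{W^\beta_n>0\}}\right] \le \sqrt{\bbE\left[W^\beta_n\,\ind_{A^\complement}\right]}\,\sqrt{\bbE\left[W^\beta_n\,(W^\beta_n)^{-1}\ind_{A^\complement}\ind_{\{W^\beta_n>0\}}\right]}.
\end{equation}
The first factor under the square root is exactly $\tilde\bbP_n(A^\complement)$ by definition of the size-biased measure, and the second factor is $\bbE[\ind_{A^\complement}\ind_{\{W^\beta_n>0\}}]\le 1$. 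This yields $\bbE[\sqrt{W^\beta_n}\,\ind_{A^\complement}]\le\sqrt{\tilde\bbP_n(A^\complement)}$, and adding the two bounds gives the claim.

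There is essentially no obstacle here; the only point requiring a modicum of care is the handling of the set $\{W^\beta_n=0\}$, on which $(W^\beta_n)^{-1/2}$ is undefined, but since the contribution of that set to $\bbE[\sqrt{W^\beta_n}\,\ind_{A^\complement}]$ is zero, one may freely intersect with $\{W^\beta_n>0\}$ before performing the manipulation. Alternatively, and perhaps more cleanly, one can simply note that $\sqrt{W^\beta_n}\le 1 + W^\beta_n$ pointwise and split differently, but the Cauchy--Schwarz route above gives exactly the stated constants. I would also remark that the inequality is an instance of the general fact that for a nonnegative random variable $V$ with $\bbE[V]=1$ and any event $A$, one has $\bbE[\sqrt V]\le\sqrt{\bbP(A)}+\sqrt{\bbE[V\ind_{A^\complement}]}$, which is the form most directly useful in the sequel.
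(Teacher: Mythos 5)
Your proof is correct and gives the stated bound, but takes a genuinely different route from the paper. The paper applies a single Cauchy--Schwarz to $\bbE[\sqrt{W^\beta_n}]^2\le \tilde\bbE_n[f]\,\bbE[f^{-1}]$ with the two-valued weight $f=\alpha\ind_A+\alpha^{-1}\ind_{A^\complement}$ and then optimizes over $\alpha$, whereas you split the expectation over $A$ and $A^\complement$ first and bound each piece separately by Cauchy--Schwarz. Both routes land on the same constants; your version is more elementary and avoids guessing the optimal weight, while the paper's version packages the two estimates into one line. Your final aside also rightly abstracts the statement: for any nonnegative $V$ with $\bbE[V]=1$, $\bbE[\sqrt V]\le\sqrt{\bbP(A)}+\sqrt{\bbE[V\ind_{A^\complement}]}$, which is exactly what is used later.

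One local slip: the middle equality in your second display,
\begin{equation}
\bbE\Big[\sqrt{W^\beta_n}\,\ind_{A^\complement}\ind_{\{W^\beta_n>0\}}\Big]
=\bbE\Big[\sqrt{W^\beta_n}\,(W^\beta_n)^{-1/2}\cdot W^\beta_n\,\ind_{A^\complement}\ind_{\{W^\beta_n>0\}}\Big],
\end{equation}
is not an identity: the integrand on the right simplifies to $W^\beta_n\,\ind_{A^\complement}\ind_{\{W^\beta_n>0\}}$, not $\sqrt{W^\beta_n}\,\ind_{A^\complement}\ind_{\{W^\beta_n>0\}}$. The Cauchy--Schwarz bound you then write is nonetheless correct, because it corresponds to the clean factorization $\sqrt{W^\beta_n}\,\ind_{A^\complement}\ind_{\{W^\beta_n>0\}}=\sqrt{W^\beta_n\ind_{A^\complement}}\cdot\sqrt{\ind_{A^\complement}\ind_{\{W^\beta_n>0\}}}$, which yields $\sqrt{\bbE[W^\beta_n\ind_{A^\complement}]}\cdot\sqrt{\bbP(A^\complement\cap\{W^\beta_n>0\})}\le\sqrt{\tilde\bbP_n(A^\complement)}$. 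Replacing the garbled middle step with this factorization (and likewise $\sqrt{W^\beta_n}\,\ind_A=\sqrt{W^\beta_n\ind_A}\cdot\sqrt{\ind_A}$ for the first piece, which even tightens the bound to $\sqrt{\tilde\bbP_n(A)\,\bbP(A)}\le\sqrt{\bbP(A)}$) makes the argument airtight and also dispenses with any explicit handling of $\{W^\beta_n=0\}$.
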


Note that in order to prove that $\f(\beta)<0$, by combining Proposition~\ref{finitevol} with Lemma~\ref{babac}, it is sufficient to find an event $A$ which is unlikely under the original measure $\bbP$ and typical under the size-biased measure $\tilde \bbP_n$.

\begin{proof}
For any measurable positive function $f$, we have, using the Cauchy-Schwarz inequality,
\begin{equation}
    \bbE\left[ \sqrt{W_n}\right]^2\le     \bbE\left[ W_nf(\go)\right] \bbE\left[ f(\go)^{-1}\right]= \tilde\bbE_n\left[ f(\go)\right] \bbE\left[ f(\go)^{-1}\right].
\end{equation}
We consider $f$ given by
 $f\coloneqq  \alpha \ind_A + \alpha^{-1} \ind_{A^{\complement}}$
with $\alpha= (\bbP(A) \tilde \bbP_n(A^{\complement}))^{1/4}$.
We have
\begin{equation}\begin{split}
 \tilde\bbE_n\left[ f(\go)\right] \bbE\left[ f(\go)^{-1}\right]&
 =\left( \alpha \tilde \bbP_n(A)+\alpha^{-1}\tilde \bbP_n(A^{\complement})\right)
 \left( \alpha^{-1} \bbP(A)+\alpha \bbP(A^{\complement})\right)\\
 &\le \left( \alpha +\alpha^{-1}\tilde \bbP_n(A^{\complement})\right)
 \left( \alpha^{-1} \bbP(A)+\alpha \right)\\
 &=2\sqrt{\bbP(A) \tilde \bbP_n(A^{\complement})}+  \bbP(A) +  \tilde \bbP_n(A^{\complement})\\
 &= \left( \sqrt{\bbP(A)} + \sqrt{\tilde \bbP_n(A^{\complement})}\right)^2.
\qedhere \end{split}\end{equation}
\end{proof}

\subsection{Spine representation for the size-biased measure}

In this section, we recall a well-known construction which allows for a simple representation of the size-biased measure, see for instance \cite[Lemma 1]{B04}.
We define $(\hat \go_i)_{i\ge 1}$ as a sequence of i.i.d. random variable with marginal distribution given by
\begin{equation}\label{tilted}
\hat \bbP[  \hat \go_1\in \cdot]= \bbE\left[ e^{\beta\go_{1,0}-\gl(\beta)}\ind_{\{\go_{1,0}\in \cdot\}}\right],
\end{equation}
 and $X$ a simple random walk ($\hat \bbP$ and $P$ denote the respective distributions). Note that the notations $\hat\omega$ and $\hat\bbP$ are not related to the point-to-point partition function $\hat W_n^\beta(x)$. Given $\go$, $\hat \go$ and $X$, all sampled independently, we define a new environment $\tilde \go=\tilde \go(X,\go,\hat \go)$  by
\begin{equation}
 \tilde \go_{i,x}\coloneqq \begin{cases}
	 \go_{i,x}  & \text{ if } x\ne X_i,\\
	 \hat \go_i  & \text{ if } x= X_i.
            \end{cases}
\end{equation}
In words,  $\tilde \go$ is obtained by tilting the distribution of the environment on the graph of $(i,X_i)_{i=1}^\infty$.
The following result states that the distribution of $\tilde \go$ corresponds to the size-biased measure.
\begin{lemma} \label{sbrepresent}It holds that
\begin{equation}
\tilde \bbP_n[\ (\go_{i,x})_{i\in \lint 1, n\rint, x\in \bbZ^d }\in \cdot \ ]=P\otimes \bbP\otimes \hat \bbP[\ (\tilde \go_{i,x})_{i\in \lint 1, n\rint, x\in \bbZ^d } \in \cdot ].
\end{equation}
\end{lemma}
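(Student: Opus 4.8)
The plan is to prove Lemma~\ref{sbrepresent} by a direct computation: unwind both sides as expectations/integrals and check they agree against an arbitrary bounded test function $F$ of the finitely many variables $(\go_{i,x})_{i\in\lint 1,n\rint, x\in\bbZ^d}$. By definition of the size-biased measure, the left-hand side applied to $F$ is $\bbE[W^\beta_n F(\go)]$. Now expand $W^\beta_n = E[e^{\beta H_n(\go,X)-n\gl(\beta)}]$ using \eqref{defwn}, and pull the expectation over the walk $X$ outside by Fubini (everything is nonnegative): this gives $E\otimes\bbE\big[e^{\beta H_n(\go,X)-n\gl(\beta)}F(\go)\big]$. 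The point is that, conditionally on the walk $X=(X_i)_{i=1}^n$, the factor $e^{\beta H_n(\go,X)-n\gl(\beta)} = \prod_{i=1}^n e^{\beta\go_{i,X_i}-\gl(\beta)}$ only involves the $n$ coordinates $\go_{i,X_i}$ (which are distinct as pairs $(i,X_i)$, since the time index $i$ alone already distinguishes them), and these are i.i.d.\ across $i$ under $\bbP$.

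First I would fix the walk trajectory and perform a change of measure coordinate by coordinate along its graph. For each $i\in\lint 1,n\rint$, the weight $e^{\beta\go_{i,X_i}-\gl(\beta)}$ has $\bbP$-mean one, and tilting the law of $\go_{i,X_i}$ by this weight produces exactly the distribution $\hat\bbP$ of $\hat\go_i$ from \eqref{tilted}, while leaving the law of all other coordinates $\go_{i,x}$ with $x\ne X_i$ untouched and independent. Hence, conditionally on $X$,
\begin{equation}
\bbE\Big[ e^{\beta H_n(\go,X)-n\gl(\beta)} F\big((\go_{i,x})_{i\le n,x}\big)\ \Big|\ X\Big] = \bbE\otimes\hat\bbE\Big[ F\big((\tilde\go_{i,x}(X,\go,\hat\go))_{i\le n,x}\big)\ \Big|\ X\Big],
\end{equation}
because replacing $\go_{i,X_i}$ by the $\hat\bbP$-distributed $\hat\go_i$ is precisely the definition of $\tilde\go$. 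Taking the expectation over $X$ (with $P$) on both sides and invoking Fubini once more yields $\bbE[W^\beta_n F(\go)] = P\otimes\bbP\otimes\hat\bbP\big[F\big((\tilde\go_{i,x})_{i\le n,x}\big)\big]$, which is exactly the claimed identity tested against $F$. Since $F$ was an arbitrary bounded measurable function of the relevant coordinates, the two laws coincide.

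The only genuinely delicate point — and the step I would present most carefully — is the bookkeeping that, along a fixed trajectory, the pairs $(i,X_i)_{i=1}^n$ are all distinct, so that the product $\prod_{i=1}^n e^{\beta\go_{i,X_i}-\gl(\beta)}$ really is a product over $n$ \emph{distinct} i.i.d.\ coordinates of $\go$ and the per-coordinate tilting is legitimate without any re-weighting collisions; this is automatic here because the first index $i$ ranges over distinct values $1,\dots,n$. After that, the rest is Fubini (justified by nonnegativity of all integrands) plus the elementary fact that tilting an integrable random variable by a mean-one positive weight gives the measure in \eqref{tilted}. No deep estimate is needed; the content is entirely in identifying the spine construction $\tilde\go$ with the coordinatewise change of measure. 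I would also remark that $W^\beta_n$ is $\cF_n$-measurable so the test function need only involve $i\le n$, matching the statement.
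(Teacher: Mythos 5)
Your proof is correct and follows essentially the same route as the paper: rewrite $\tilde\bbP_n[F(\go)] = \bbE[W^\beta_n F(\go)]$, apply Fubini to bring the walk expectation outside, and then observe that for a fixed trajectory $X$ the density $e^{\beta H_n(\go,X)-n\gl(\beta)} = \prod_{i=1}^n e^{\beta\go_{i,X_i}-\gl(\beta)}$ tilts exactly the coordinates along the graph of $X$ to the law $\hat\bbP$ of \eqref{tilted} while leaving everything else i.i.d.\ with the original law, which is precisely the definition of $\tilde\go$. The extra care you take in noting that the pairs $(i,X_i)$ are automatically distinct (the time index already separates them, so there are no re-weighting collisions) is the right point to emphasize, though the paper leaves it implicit.
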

We include a proof for completeness.
\begin{proof}
Given a bounded measurable function $f$ which depends only on the value of $\go$ up to time $n$ we have, by Fubini's theorem,
\begin{equation}
 \tilde \bbP_n[f(\go)]= \bbE\left[ W^{\beta}_{n}f(\go)\right]
 =E\left[  \bbE\left[e^{\beta H_n(\go,X)-\gl(\beta)} f(\go) \right]\right].
\end{equation}
Now for a fixed realization of $X$, $e^{\beta H_n(\go,X)-\gl(\beta)}$ is a probability density. Setting
$  \tilde \bbP_{n,X}(\dd \go)\coloneqq  e^{\beta H_n(\go,X)-\gl(\beta)}\bbP(\dd \go) $
we have
\begin{equation}
  \tilde \bbP_n[f(\go)]= E\left[ \tilde \bbE_{n,X}\left[ f(\go) \right]\right]
\end{equation}
and we can conclude by noticing that under  $\tilde \bbE_{n,X}$ the variables $\go_{i,x}$ are independent, the distribution of
$(\go_{i,X_i})_{i=1}^n$ is given by \eqref{tilted} while elsewhere the distribution of $\go_{i,x}$ is the same as under $\bbP$.
\end{proof}

\section{Organization of the proof of Theorem~\ref{main}}\label{orgamain}

\subsection{Identifying the right event}

As explained in Section~\ref{sizeb}, to prove that very strong disorder holds, we need to find an event which is typical under the size-biased measure $\tilde \bbP_n$ and atypical under the original measure $\bbP$.

\medskip

Our idea is to consider atypical fluctuations in the environment.
We look for regions in $\lint 1, n\rint\times \lint -n,n\rint^d$ (which is the area in which the size-biasing by $W^{\beta}_n$ has an effect) where the environment is especially favorable -- this corresponds to high values of $\go$ -- and show that these appear more frequently under the size-biased measure.

\medskip

Let us now be more precise  about the definition of favorable regions.
We fix an integer $s$. For $m\ge 0$ and $y\in \bbZ^d$, we say that $(m,y)$ is a \textit{good starting point} for $\go$ if $\theta_{m,y} W^{\beta}_{s} \ge n^{4d}$ (recall the definition of the shift operator introduced in \eqref{shift1}-\eqref{shift2}). Our goal is to show that
under $\tilde \bbP_n$ the probability to find a good starting point within $\lint 1, n-s\rint\times \lint -n,n\rint^d$ is very close to $1$ while it is close to zero under $\bbP$.
 We introduce thus the event
$$ A_n\coloneqq  \left\{\exists (y,m)\in \lint 1, n-s\rint\times \lint -n,n\rint^d, \ \theta_{m,y} W^{\beta}_{s} \ge n^{4d} \right\}.$$

\begin{proposition}\label{spot}
Assume that strong disorder holds. Then there 
exist $C>0$ and $n_0\in\N$ such that for all $n\ge n_0$ there exists  an $s=s_n\in  [0,C \log n]$ such that
\begin{equation}\label{bounds}\begin{split}
\bbP(A_n)\le \frac{1}{8 (2n+1)^{2d}}  \quad \text{ and } \quad  \tilde \bbP_n(A^{\complement}_n)\le \frac{1}{8 (2n+1)^{2d}}.
\end{split}\end{equation}

\end{proposition}

 The above immediately implies our main result.

\begin{proof}[Proof of Theorem~\ref{main} assuming Proposition~\ref{spot}]
Assume that strong disorder holds at $\beta$ and let $A_n$ be as in Proposition~\ref{spot} for $n$ sufficiently large. Using  Lemma~\ref{babac}, we have
\begin{equation}\label{upperlower}
 \bbE\left[ \sqrt{W_n}\right]\le \sqrt{\bbP(A_n)}+\sqrt{  \tilde \bbP_n(A^{\complement}_n)}\le \frac{1}{\sqrt{2}(2n+1)^d}
\end{equation}
and using Proposition~\ref{finitevol} we conclude that very strong disorder holds at $\beta$. This implies $\beta_c=\bar\beta_c$. For part (b), assume for contradiction that strong disorder holds at $\beta_c$, hence by the previous argument we must also have very strong disorder at $\beta_c=\bar\beta_c$. This contradicts part (b) of Theorem~\ref{thmx:CY}, so we must have weak disorder at $\beta_c$.
\end{proof}

Let us now make a couple of observations concerning Proposition~\ref{spot}. As a consequence of \eqref{fromabove} we have $W_n^\beta\leq L^{n}$, hence the choice of $s_n$ is optimal in the sense that the second bound in \eqref{upperlower} cannot hold for $s_n<c \log n$, where $c=1/(\log L)$.

\smallskip We also note that bounding $\bbP(A_n)$ is not  difficult (and the bound is valid for any choice of $s$).
Indeed, by translation invariance and Markov's inequality, we have
\begin{equation}\label{pan}
 \bbP(A_n)\le  \!\!\!\!\!\!\! \sum_
 {(m,y)\in \lint 1, n\rint\times \lint -n,n\rint^d}  \!\!\!\!\!\!\! \bbP\left( \theta_{m,y} W^{\beta}_{s} \ge n^{4d}\right)=  n (2n+1)^d
  \bbP\left( W^{\beta}_{s} \ge n^{4d}\right)\le n^{1-4d}(2n+1)^d.
\end{equation}

The main task in the proof of Proposition~\ref{spot} is thus to estimate $\tilde \bbP_n(A^{\complement}_n)$, which requires the combination of several ingredients and is performed in Section~\ref{proofspot}.
Let us provide here a rough sketch.
The main idea is to use the spine representation of Lemma~\ref{sbrepresent} to find for each $m$ a a random value $X_n$ such that $\tilde \bbP_n( \theta_{m,X_m} W^{\beta}_s\in \cdot )= \tilde \bbP_s( W^{\beta}_s\in \cdot)$.
To obtain a lower bound on  $\tilde \bbP_n(A^{\complement}_n)$, we then look at the event that $\{ \theta_{m,X_m} W^{\beta}_s\le n^{4d}\}$ for all $m$ which are multiples of $s$. The spacing by an amount $s$ ensures that $(\theta_{m,X_m} W^{\beta}_s)_{m\in (s\bbZ) \cap [0,n-s]}$ are independent under $\tilde \bbP_n$.
To conclude, we need a lower bound on $\tilde \bbP_s(W^{\beta}_{s} \ge n^{4d})$ which is provided by the following result.
\begin{proposition}\label{almostthesame}
If strong disorder holds then for any $\gep>0$ there exist $C(\gep),u_0(\gep)>0$ such that every $u\ge u_0$
 \begin{equation}\label{needed2}
 \exists s\in \lint 0,C \log u\rint , \quad \bbP\left[  W^{\beta}_s\ge u\right]\ge u^{-(1+\gep)}.
\end{equation}
 and as a consequence 
 \begin{equation}\label{needed3}
 \exists s\in \lint 0,C \log u\rint , \quad \tilde\bbP\left[  W^{\beta}_s\ge u\right]\ge u^{-\gep}.
\end{equation}
\end{proposition}
 As can be seen from above, the range of $s_n$ in Proposition~\ref{spot} comes from the fact that \eqref{needed3} is applied for $u=n^{4d}$.
While it is important in our reasoning that $s_n$ is not too large, we have a considerable margin of error and $s_n\le n^{\delta}$ for some small $\delta>0$ would be sufficient for our purpose.

\medskip

 Proposition~\ref{almostthesame}, contains the main technical difficulty in the paper and its proof is detailed in Sections~\ref{proofalsmostthesame},~\ref{intermezzo} and~\ref{discretos}.  We provide more insight concerning the organization of this proof in the next subsection.
 
 \subsection{Tail distribution of the maximum of the point-to-point partition function}\label{insight}

The proof of Proposition~\ref{almostthesame} relies on an estimate for the maximum of the point-to-point partition function over all possible endpoints, which we now present.
To motivate this result, we first notice that we can easily obtain, up to a constant factor, the tail distribution of the variable $\max_{n\ge 0}  W^{\beta}_n$.

\begin{lemma}\label{dull}
If strong disorder holds then for any $u\ge 1$,
\begin{equation}\label{dullbound}
 \bbP\left( \max_{n\ge 0}  W^{\beta}_n\ge u\right) \in\left[ \frac{1}{Lu},\frac{1}{u}\right].
 \end{equation}
\end{lemma}

\begin{proof}
We set $\tau_u\coloneqq  \inf\{ n\ge 0 \ \colon  \ W^{\beta}_n\ge u \}$ (with the convention $\inf \emptyset= \infty$). Note that from \eqref{fromabove} we have, on the event $\tau_u<\infty$,
\begin{equation}\label{step1}
W^{\beta}_{\tau_u}\in [u, Lu).
\end{equation}
Using the optional stopping theorem (first equality), and dominated convergence together with the assumption that $W^{\beta}_{\infty}=0$, we have
\begin{equation}\label{step2}
 1=\lim_{n\to \infty} \bbE\left[W^{\beta}_{\tau_u\wedge n} \right]=\bbE\left[W^{\beta}_{\tau_u}\ind_{\{\tau_u<\infty\}} \right].
\end{equation}
The result follows from the combination of \eqref{step1} and \eqref{step2}.
\end{proof}

In view of the desired result \eqref{needed2}, the task is thus to get a bound on the time when $(W_n^\beta)_{n\in\N}$ achieves its maximum. 
The following theorem is the key step to obtaining this control. It states that
the maximum of \textit{point-to-point} partition functions has the same tail distribution (up to a constant factor) as $\max_{n\ge 0} W^{\beta}_n$. This result is of independent interest and is the most technical part of the proof.

\begin{theorem}\label{locaend}
If strong disorder holds then there exists a constant $c>0$ such that for all $u\ge 1$
\begin{equation}\label{locatail}
 \bbP\left[ \max_{n\ge 0, x\in \bbZ^d} \hat  W^{\beta}_n(x)\ge u\right] \ge \frac{c}{u}.
 \end{equation}
\end{theorem}

To close this section, we  further mention that under the very strong disorder assumption, the maximum of  $W^{\beta}_n(x)$ is with large probability attained for a ``small'' value of $n$. More precisely, we have the following  consequence of Lemma~\ref{dull} and Theorem~\ref{locaend}.

\begin{corollary}\label{notessential}
If very strong disorder holds then there exist constants $c,C>0$ which are such that for all $u\ge 1$,
 \begin{align}
	 \bbP\left[ \max_{(n,x)\in \llbracket 0, C\log u\rrbracket \times\bbZ^d} \hat  W^{\beta}_n(x)\ge u\right] &\ge \frac{c}{u},\\
	 \quad \bbP\left[  \exists s\in \lint 0,C \log u\rint , \    W^{\beta}_s\ge u\right]&\ge \frac{c}{u}.
\end{align}

\end{corollary}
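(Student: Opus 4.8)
The plan is to deduce Corollary~\ref{notessential} from Theorem~\ref{locaend}, the only extra input being that, under very strong disorder, the event $\{\max_{n,x}\hat W^{\beta}_n(x)\ge u\}$ is realised at a time $n$ of order $\log u$ with overwhelming conditional probability. Concretely, let $c_0>0$ be the constant furnished by Theorem~\ref{locaend}, so that $\bbP[\max_{n\ge0,x}\hat W^{\beta}_n(x)\ge u]\ge c_0/u$. Since $\hat W^{\beta}_n(x)\le W^{\beta}_n$ for all $n,x$, one has, for every $m\ge1$,
\[
\bbP\Big[\exists\, n>m,\ x\in\bbZ^d:\ \hat W^{\beta}_n(x)\ge u\Big]\le\bbP\Big[\sup_{n>m}W^{\beta}_n\ge u\Big]\le\sum_{n>m}\bbP\big[W^{\beta}_n\ge u\big]\le u^{-1/2}\sum_{n>m}\bbE\big[\sqrt{W^{\beta}_n}\,\big].
\]
I would then invoke the fact that, under very strong disorder, $\bbE[\sqrt{W^{\beta}_n}]$ decays exponentially, say $\bbE[\sqrt{W^{\beta}_n}]\le K e^{-\delta n}$ for some $K,\delta>0$ and all $n$. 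Choosing $m=m_u:=\lceil C\log u\rceil$ with $C$ large enough that $\delta C\ge 1$, the displayed bound becomes $\le\tfrac{K}{1-e^{-\delta}}\,u^{-1/2}e^{-\delta m_u}\le\tfrac{K}{1-e^{-\delta}}\,u^{-3/2}$, which is $\le\tfrac{c_0}{2u}$ once $u$ is large; the remaining bounded range of $u$ is absorbed into the final constant.

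With this in hand the first assertion follows from
\[
\bbP\Big[\max_{(n,x)\in[0,C\log u]\times\bbZ^d}\hat W^{\beta}_n(x)\ge u\Big]\ge\bbP\Big[\max_{n\ge0,\,x}\hat W^{\beta}_n(x)\ge u\Big]-\bbP\Big[\exists\, n>C\log u,\ x:\ \hat W^{\beta}_n(x)\ge u\Big]\ge\frac{c_0}{u}-\frac{c_0}{2u}=\frac{c_0}{2u},
\]
so the claim holds with $c=c_0/2$. The second assertion is then immediate: since $W^{\beta}_s\ge\hat W^{\beta}_s(x)$ for every $x$, the event $\{\max_{n\le C\log u,\,x}\hat W^{\beta}_n(x)\ge u\}$ is contained in $\{\exists\, s\in\lint 0,C\log u\rint:\ W^{\beta}_s\ge u\}$, whose probability is therefore also $\ge c/u$.

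The one genuinely non-routine ingredient — and what I expect to be the main obstacle — is the exponential decay of $\bbE[\sqrt{W^{\beta}_n}]$ under very strong disorder; note that polynomial (or even stretched-exponential with exponent $<1$) decay would not suffice here, since we need the tail sum $\sum_{n>C\log u}\bbE[\sqrt{W^{\beta}_n}]$ to beat $u^{-1/2}$. This exponential decay belongs to the fractional-moment circle of ideas (cf. \cite{CV06} and the discussion around Proposition~\ref{finitevol}) and I would cite it; a self-contained route is to note that, by the submultiplicativity $\bbE[\sqrt{W^{\beta}_{n+m}}]\le(2n+1)^d\,\bbE[\sqrt{W^{\beta}_n}]\,\bbE[\sqrt{W^{\beta}_m}]$ already established in the proof of Proposition~\ref{finitevol}, together with $\sqrt{W^{\beta}_{k+1}}\le\sqrt{L}\,\sqrt{W^{\beta}_k}$ from \eqref{fromabove}, exponential decay at all scales follows as soon as $\bbE[\sqrt{W^{\beta}_{n_0}}]<(2n_0+1)^{-d}$ for a single $n_0$ — so the substantive point to settle (or to quote) is precisely that the very strong disorder condition $\f(\beta)<0$ forces such a single-scale estimate. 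Everything else in the argument is a union bound and Markov's inequality.
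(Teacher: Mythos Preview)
Your overall architecture is exactly the one the paper uses: invoke Theorem~\ref{locaend} for the unrestricted maximum, then subtract the ``late-time'' contribution $\{\exists\,n>C\log u:\ W^{\beta}_n\text{ is large}\}$, bounded by a union over $n$. The second assertion is deduced the same way.

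The substantive difference is how the late-time term is handled. You keep the threshold $u$, apply Markov to $\sqrt{W^{\beta}_n}$, and thus need exponential decay of $\bbE[\sqrt{W^{\beta}_n}]$; you correctly flag that deducing this from $\f(\beta)<0$ is the non-routine step, and your proposed self-contained route (submultiplicativity plus a single-scale estimate) leaves precisely the implication ``$\f(\beta)<0\Rightarrow\bbE[\sqrt{W^{\beta}_{n_0}}]<(2n_0+1)^{-d}$ for some $n_0$'' open. The paper sidesteps fractional moments entirely: since $u\ge1$, it replaces the threshold by $1$, using the inclusion
\[
\Big\{\max_{(n,x)\in[0,C\log u]\times\bbZ^d}\hat W^{\beta}_n(x)\ge u\Big\}\supset\Big\{\max_{(n,x)\in\bbN\times\bbZ^d}\hat W^{\beta}_n(x)\ge u\Big\}\setminus\Big\{\max_{n>C\log u}W^{\beta}_n\ge 1\Big\},
\]
and then bounds $\bbP[W^{\beta}_n\ge 1]=\bbP[\log W^{\beta}_n-\bbE[\log W^{\beta}_n]\ge -n\f(\beta)]\le e^{-\alpha n}$ directly via the concentration inequality \cite[Theorem~6.1]{LW09}, which requires only \eqref{expomoment}. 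This buys two things over your route: it closes the gap you identified with a single citation (no fractional-moment machinery), and it avoids \eqref{fromabove} altogether --- a point the paper explicitly notes. Your approach can in fact be completed using the same concentration result (split $\bbE[\sqrt{W^{\beta}_n}]$ according to $\{W^{\beta}_n\le e^{n\f(\beta)/2}\}$ and apply Cauchy--Schwarz on the complement), but that is a detour compared to the paper's direct argument.
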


We note that \eqref{fromabove} is not used in the proof of this corollary, only the weaker assumption \eqref{expomoment}. Moreover, the above corollary is not used to prove the main result. We state it rather to highlight that
 once Theorem~\ref{main} is established, a stronger variant of Proposition~\ref{almostthesame} (with no need for $\gep>0$ in the exponent) can be proved rather easily.
Proposition~\ref{almostthesame} is thus not an optimal result but rather a (crucial) technical step in our proof.
The proof of Corollary~\ref{notessential} is presented in Appendix~\ref{ouix}.

\subsection{Organization of the remaining sections}
Let us outline how the remainder of the paper is organized:
\begin{itemize}
 \item
In Section~\ref{proofspot} we prove Proposition~\ref{spot} assuming Proposition~\ref{almostthesame}.
\item
In Section~\ref{proofalsmostthesame} we show how to deduce  Proposition~\ref{almostthesame} from Theorem~\ref{locaend}.
\end{itemize}
The proof of Theorem~\ref{locaend} relies on a series of arguments which include martingale techniques: Doob's martingale decomposition, martingale concentration estimates, etc...

\smallskip In order to help the reader, we first provide the proof in a discrete-space, continuous-time setup: the directed polymer in a Brownian environment \cite{CH06}.
The computations for continuous martingales are cleaner than the ones performed in discrete time, and we believe this allows for a clearer exposition of the main ideas behind the computation. Section~\ref{intermezzo} is devoted to this technical intermezzo. 
Finally in Section~\ref{discretos}, we provide the proof of Theorem~\ref{locaend}, paralleling the ideas introduced in Section~\ref{intermezzo}.

\section{Proof of Proposition~\ref{spot}}\label{proofspot}

Recall from Equation \eqref{pan} that the required bound on $\bbP(A_n)$ is valid for any choice of $s$. We can thus focus on bounding $\tilde \bbP(A^{\complement}_n)$ assuming that  Proposition~\ref{almostthesame} holds.
We apply Proposition~\ref{almostthesame} for $u=n^{4d}$  and $\gep=1/(12d)$ and we consider $s\in \lint 0, 4C \log n\rint$, which is such that 
\begin{equation}\label{sbes}
 \tilde \bbP_s(W^\beta_s\ge n^{4d})  \ge n^{-4d\gep}=n^{-1/3}
\end{equation}
(here the value $1/3$ is chosen rather arbitrarily and could be replaced by any number in $(0,1)$).
 Using Lemma~\ref{sbrepresent} (and the fact that the event $A_n$ is $\cF_n$-measurable), we have
\begin{equation}\label{gjiks}
 \tilde \bbP_n(\go \in A^{\complement}_n) = P\otimes \bbP \otimes \hat \bbP\left(  \tilde\go \in A^{\complement}_n  \right).
\end{equation}
We let  $\theta_{m,y}\tilde W^{\beta}_{s}$ denote the partition function constructed with the environment $\theta_{m,y} \tilde \go:= (\tilde{\go}_{i+m,x+y})_{i\ge 0,x\in \bbZ^d}$. From the definition of $A_n$ and the fact that $|X_m|\le m$, it follows immediately that
\begin{multline}\label{gjikss}
 \{ \tilde \go  \in A^{\complement}_n\}=  \left\{ \forall (m,y)\in \lint0,n-s\rint \times \lint -n,n\rint^d \ \theta_{m,y}\tilde W_s< n^{4d}  \right\} \\  \subseteq \left\{ \forall j\in \left\lint0, \left\lfloor  \frac{n}{s}-1\right\rfloor\right\rint, \ \theta_{j,X_{js}}\tilde W_s< n^{4d}  \right\}.
\end{multline}
Now the important observation is the following.
\begin{lemma}\label{cruxial}
 Under  $P\otimes \bbP \otimes \hat \bbP$, the variables $(\theta_{js,X_{js}}\tilde W_s)_{j\ge 0}$ are independent and identically distributed.
\end{lemma}
\begin{proof}
 Letting $\tilde \go^{(j)}:=(\tilde{\go}_{i+js, x+ X_{js}})_{i\in \lint 1,s\rint,x\in \bbZ^d}$ denote the shifted environment restricted to the time slice $\lint 1,s\rint$. Since $(\theta_{js,X_{js}}\tilde W_s)$ is a function of $\tilde \go^{(j)}$, it is sufficient to show that $(\tilde \go^{(j)})_{j\ge 0}$ are independent and identically distributed.
 
\medskip

\begin{figure}
 \begin{center}
\includegraphics[width=.7\textwidth]{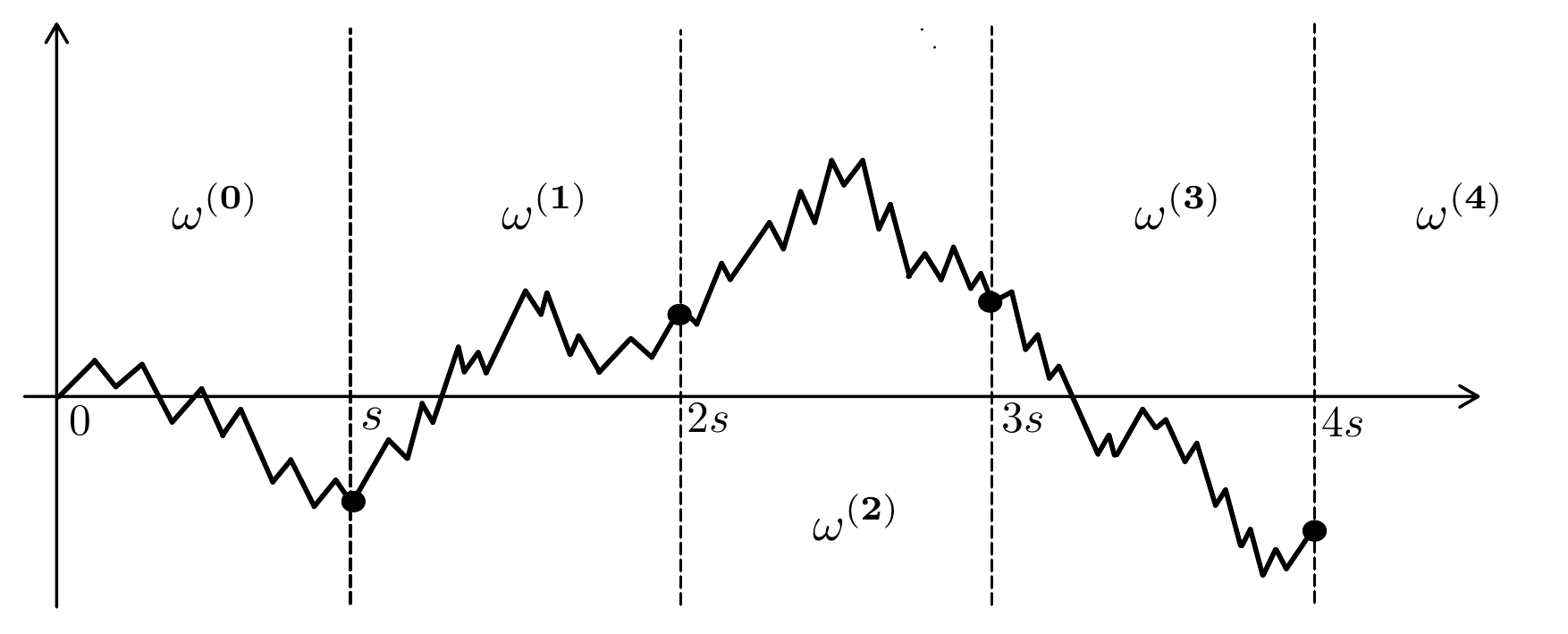}
\end{center}
\caption{\label{lafigure}\footnotesize{
The environment $\go^{(j)}$ is simply the environment restricted to the $j$-th slice of width $s$, which has been shifted so that $(js,X_{js})$ (the solid circles on the pictures) plays the role of the origin. Since the slices are disjoint and $X$ is independent of $\go$, the environments $(\go^{(j)})_{j\ge 1}$ are independent and distributed like $\go^{(0)}$.
The environments $\tilde \go^{(j)}$ are obtained by replacing the environment by $\hat \go$ along the spine. The Markov property applied to $X$ ensures that $(X^{(j)})_{j\ge 0}$ (which described the spine portions after shift) are i.i.d.\ and the same thing can be said about the restriction of $\hat \go$ to each time interval $\lint js+1,(j+1)s\rint$.}
}
\end{figure}

Let us consider 
$$X^{(j)}= (X_{js+i}-X_{js})_{i\in \lint 1,s\rint}, \quad   \go^{(j)}=({\go}_{i+js, x+ X_{js}})_{i\in \lint 1,s\rint,x\in \bbZ^d} \quad \text{ and } \quad \hat \go^{(j)}=(\hat \go_{js+n})_{n\in \lint 1,s\rint}.$$
The environment $\tilde \go^{(j)}$ is a function of the triplet $(X^{(j)},\go^{(j)},\hat \go^{(j)})$. More precisely we have
$$\tilde \go^{(j)}_{i,x}= \go^{(j)}_{i,x}\ind_{\{X^{(j)}_i\ne x\}}+  \hat \go^{(j)}_i \ind_{\{X^{(j)}_i=x\}}. $$
Furthermore, under $P\otimes \bbP \otimes \hat \bbP$, $(X^{(j)}, \go^{(j)}, \hat \go^{(j)})_{j\ge 0}$ is an i.i.d.\ sequence (see Figure~\ref{lafigure})
yielding the desired conclusion.
\end{proof}
Combining \eqref{gjiks} and \eqref{gjikss} we have 
\begin{equation}\begin{split}
  \tilde \bbP_n(A^{\complement}_n)&\le P\otimes \bbP \otimes \hat \bbP \left( \forall j\in \left\lint0, \left\lfloor  \frac{n}{s}-1\right\rfloor\right\rint, \ \theta_{j,X_{js}}\tilde W_s< n^{4d}  \right)\\
  &= P\otimes \bbP \otimes \hat \bbP \left( \tilde W_s< n^{4d}  \right)^{\lfloor n/s\rfloor}\\
  &= \tilde \bbP_s\left(W_s<n^{4d}\right)^{\lfloor n/s\rfloor}
\end{split}\end{equation}
where the first equality above is from Lemma~\ref{cruxial} and the second one from Lemma~\ref{sbrepresent}.
Using \eqref{sbes} and the fact that $s\le C\log n$ we conclude that $\tilde \bbP_n(A^{\complement}_n)\le \exp(-\sqrt{n})$.
\qed

\section{Proof of Proposition~\ref{almostthesame}}\label{proofalsmostthesame}

In this section, we show how Proposition~\ref{almostthesame} can be deduced from Theorem~\ref{locaend}.
Let us first explain how  \eqref{needed3} follows from \eqref{needed2}. We have
\begin{equation}
\tilde \bbP_s( W^{\beta}_{s} \ge u  )=   \bbE\left[ W^{\beta}_{s}\ind_{\{W^{\beta}_{s} \ge u\}}  \right]\ge  u  \bbP( W^{\beta}_{s} \ge u),
\end{equation}
and hence $\bbP( W^{\beta}_{s} \ge u)\ge u^{-1-\gep}$ implies that $\tilde \bbP_s( W^{\beta}_{s} \ge u  )\ge u^{-\gep}$.
Concerning \eqref{needed2}, we observe that by a union bound we have
\begin{equation}\begin{split}
 \max_{n\in \lint 0,C \log u\rint }  \bbP\!\left[  W^{\beta}_n\ge u\right] &\ge (C\log u)^{-1}  \bbP
 \left[ \max_{n\in \lint 0,C \log u\rint } W^{\beta}_n\ge u\right] \\ &\ge (C\log u)^{-1} \bbP\left[ \max_{(n,x)\in \lint 0,C \log u\rint \times \bbZ^d} \hat W^{\beta}_n(x)\ge u\right].
\end{split}\end{equation}
Hence \eqref{needed2} can be deduced immediately from the following stronger result, which we prove with the help of Theorem~\ref{locaend}.
\begin{proposition}\label{neededresult}
Assume that strong disorder holds.
Given $\gep>0$, there exist constants $C=\ C(\gep)$ and $u_0=\ u_0(\gep)>0$ such that for all $u\ge u_0$,
\begin{equation}\label{needed}
 \bbP\left[ \max_{(n,x)\in \lint 0,C \log u\rint \times \bbZ^d} \hat W^{\beta}_n(x)\ge u\right]\ge u^{-(1+\gep)}.
\end{equation}
\end{proposition}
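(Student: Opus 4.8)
The goal is to deduce Proposition~\ref{neededresult} (equivalently Proposition~\ref{almostthesame}) from Theorem~\ref{locaend}, which gives $\bbP[\max_{n\ge 0, x}\hat W^{\beta}_n(x)\ge u]\ge c/u$. The issue is twofold: first, Theorem~\ref{locaend} concerns the \emph{point-to-point} partition function $\hat W^{\beta}_n(x)$ rather than the full $W^{\beta}_n$; second, it gives a bound on the maximum over \emph{all} times $n\ge 0$, whereas we need the maximum restricted to $n\le C\log u$ and, moreover, the bound on the unrestricted quantity only of order $1/u$ rather than $u^{-(1+\gep)}$. The strategy is: (i) pass from $\hat W^{\beta}_n(x)$ back to $W^{\beta}_n$ at the cost of a polynomial-in-$n$ factor, using that there are at most $(2n+1)^d$ reachable endpoints; (ii) truncate the time range to $n\le C\log u$, paying the $\gep$ in the exponent to absorb the contribution of large times.

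\textbf{Step 1: from point-to-point to the full partition function.} Since $\hat W^{\beta}_n(x)\le W^{\beta}_n$ and $\hat W^{\beta}_n(x)=0$ unless $|x|\le n$, on the event $\{\max_{n\ge 0,x}\hat W^{\beta}_n(x)\ge u\}$ we have in particular $W^{\beta}_n\ge u$ for the relevant $n$, so $\max_{n\ge 0} W^{\beta}_n\ge u$. Combined with Lemma~\ref{dull} this already shows $\bbP[\max_{n}W^{\beta}_n\ge u]\in[1/(Lu),1/u]$, consistent with Theorem~\ref{locaend}. To get the \emph{restricted} statement \eqref{needed}, I would argue as follows. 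Fix $C$ (to be chosen large depending on $\f(\beta)$ and $\gep$) and split according to whether the maximizing time is $\le C\log u$ or not:
\begin{equation}
 \bbP\left[\max_{n\ge 0,x}\hat W^{\beta}_n(x)\ge u\right] \le \bbP\left[\max_{(n,x)\in\lint 0, C\log u\rint\times\bbZ^d}\hat W^{\beta}_n(x)\ge u\right] + \bbP\left[\exists\, n> C\log u:\ W^{\beta}_n\ge u\right].
\end{equation}
The first term on the right is what we want to bound below; the left side is $\ge c/u$ by Theorem~\ref{locaend}. So it suffices to show the second term is $\le \tfrac{c}{2}u^{-1}$ for $u$ large, and then in fact we would get the \emph{stronger} bound $c/(2u)$ rather than $u^{-(1+\gep)}$ — but here is where strong versus very strong disorder matters: under mere strong disorder we do not have exponential decay, so controlling $\bbP[\exists n>C\log u:\ W^{\beta}_n\ge u]$ is delicate, and this is why the statement settles for the weaker $u^{-(1+\gep)}$.

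\textbf{Step 2: controlling large times under strong disorder.} The key estimate is a bound of the form $\bbP[W^{\beta}_n\ge u]\le g(n)/u$ uniformly, and more importantly a bound on $\bbP[\exists n\ge N:\ W^{\beta}_n\ge u]$ that decays in $N$. Using the optional-stopping/maximal-inequality idea from Lemma~\ref{dull} applied to the martingale $W^{\beta}_n$ started from time $N$: conditionally on $\cF_N$, the process $(W^{\beta}_{N+k}/W^{\beta}_N)_{k\ge 0}$ is again a nonnegative martingale with the same law as $(W^{\beta}_k)$ (by the shift/independence structure), and under strong disorder it tends to $0$. Hence $\bbP[\exists n\ge N:\ W^{\beta}_n\ge u\mid \cF_N] = \bbP[\exists k\ge 0:\ W^{\beta}_k\ge u/W^{\beta}_N]\le W^{\beta}_N/u$ by Lemma~\ref{dull} (monotone in the threshold), so taking expectations,
\begin{equation}
 \bbP\left[\exists\, n\ge N:\ W^{\beta}_n\ge u\right]\le \frac{\bbE[W^{\beta}_N]}{u}=\frac{1}{u},
\end{equation}
which, annoyingly, does not decay in $N$. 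To recover decay one must instead use a fractional moment: for $p\in(0,1)$, under strong disorder $\bbE[(W^{\beta}_N)^p]\to 0$ as $N\to\infty$ (this follows because $W^{\beta}_N\to 0$ a.s.\ and $(W^{\beta}_N)^p$ is bounded in $L^{1/p'}$ for suitable $p'$, or more simply by a concavity/supermartingale argument giving $\bbE[(W^{\beta}_N)^p]\le \bbE[(W^{\beta}_1)^p]^N$ when this is $<1$, which holds for $N$ past some $n_1$ and then iterating on blocks). Then by Markov,
\begin{equation}
 \bbP\left[\exists\, n\ge N:\ W^{\beta}_n\ge u\mid\cF_N\right]\le \min\!\left(1,\frac{W^{\beta}_N}{u}\right)\le \frac{(W^{\beta}_N)^p}{u^{p}},
\end{equation}
so $\bbP[\exists n\ge N:\ W^{\beta}_n\ge u]\le u^{-p}\bbE[(W^{\beta}_N)^p]\le u^{-p}\rho^{N}$ for some $\rho=\rho(p)<1$ and $N$ large. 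Choosing $N=C\log u$ with $C$ large enough that $\rho^{C\log u}=u^{C\log\rho}\le u^{-2}$, this term is $\le u^{-p-2}$, which for $u$ large is $\le \tfrac{c}{2u}$; plugging into the display of Step~1 gives the first term $\ge \tfrac{c}{2u}\ge u^{-(1+\gep)}$ for $u\ge u_0(\gep)$. Finally, the restricted point-to-point statement \eqref{needed} implies \eqref{needed2}: on the event in \eqref{needed}, since $\hat W^{\beta}_n(x)\le W^{\beta}_n$, there is $s\le C\log u$ with $W^{\beta}_s\ge u$, giving $\bbP[\exists s\in\lint 0,C\log u\rint: W^{\beta}_s\ge u]\ge u^{-(1+\gep)}$, which is Proposition~\ref{almostthesame}.

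\textbf{Main obstacle.} The delicate point is Step~2: obtaining genuine decay in $N$ for $\bbP[\exists n\ge N:\ W^{\beta}_n\ge u]$ using only strong disorder (not very strong). The naive first-moment/optional-stopping bound is flat in $N$; one genuinely needs a fractional-moment input, and the subtlety is that fractional moments $\bbE[(W^{\beta}_N)^p]$ need not decay \emph{exponentially} under strong disorder alone — one must extract \emph{some} quantitative decay, e.g.\ via the fact that $W^{\beta}_N\to 0$ a.s.\ together with a uniform integrability / submultiplicativity argument over blocks, and then this only yields the weaker exponent $1+\gep$ in the end (matching the statement). This is precisely the place where the difference between Proposition~\ref{almostthesame} and the sharper Corollary~\ref{notessential} (which assumes very strong disorder and gets $c/u$) originates.
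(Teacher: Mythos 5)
Your proposal takes a genuinely different route from the paper, but it has a fatal gap in Step~2, which you yourself flag in the "Main obstacle" paragraph without resolving it.

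\textbf{The gap.} Your argument requires a quantitative decay estimate of the form $\bbE[(W^{\beta}_N)^p]\le \rho^N$ for some $p\in(0,1)$ and $\rho<1$, so that taking $N=C\log u$ kills the "large time" contribution. But this exponential decay of a fractional moment is essentially \emph{equivalent} to very strong disorder: by Jensen applied to $\log$, $\bbE[\log W^{\beta}_N]=\frac1p\,\bbE[\log (W^{\beta}_N)^p]\le\frac1p\log\bbE[(W^{\beta}_N)^p]\le\frac{N}{p}\log\rho<0$, so $\f(\beta)\le\frac1p\log\rho<0$. Since Proposition~\ref{neededresult} must hold under the assumption of strong disorder only — indeed it is the crucial step in proving that strong disorder \emph{implies} very strong disorder — assuming this decay is circular. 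The submultiplicativity bound $\bbE[(W^{\beta}_N)^p]\le\bbE[(W^{\beta}_1)^p]^N$ you invoke is false as stated (the fractional moment of a supermartingale is non-increasing but not submultiplicative in this way), and the correct block-wise submultiplicativity, as in Proposition~\ref{finitevol}, passes through $\sum_x\bbE[(\hat W_n(x))^p]$, whose smallness is again exactly the very strong disorder criterion. Under strong disorder alone one can show $\bbE[(W^{\beta}_N)^p]\to 0$ by uniform integrability, but with no rate, and no rate means $N=C\log u$ cannot absorb the large-time contribution. Note also that if your scheme worked it would yield the bound $c/u$ of Corollary~\ref{notessential} from strong disorder alone, whereas the paper explicitly needs very strong disorder (via the free energy concentration estimate) for that corollary.

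\textbf{What the paper does instead.} The paper's proof of Proposition~\ref{neededresult} avoids any decay-rate estimate. It first uses Theorem~\ref{locaend} to fix a large but \emph{finite} time horizon $T=T(v)$ with $\bbP[\max_{(n,x)\in[0,T]\times\bbZ^d}\hat W_n(x)>v]\ge c/(2v)$. It then iterates: using the supermultiplicativity $\hat W_{n+m}(x+z)\ge \hat W_m(z)\,\theta_{m,z}\hat W_n(x)$ and the independence of the environment across disjoint time slabs, one shows by induction that $\bbP[\max_{(n,x)\in[0,kT]\times\bbZ^d}\hat W_n(x)>v^k]\ge(c/(2v))^k$. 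Setting $u=v^k$ and $C(\gep)=T(v)/\log v$ with $v$ large enough that $2v^{-\gep/2}\le c$ gives \eqref{needed}. The restart argument — stop at the first $(m,z)$ where $\hat W_m(z)>v^k$ and apply the one-block estimate afresh to $\theta_{m,z}\go$ — is what makes the bound work under strong disorder alone, and it is also where the $\gep$-loss in the exponent is incurred, cleanly and explicitly, rather than via an unavailable moment estimate.
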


 \begin{proof}
Using Theorem~\ref{locaend}, for any fixed $v$, we can find $T=T(v)$ such that
\begin{align}\label{indstart}
\bbP\left[ \max_{(n,x)\in \lint 0,T\rint \times \bbZ^d} \hat W_n(x)>v\right]\ge \frac{c}{2v}.
\end{align}
We are going to prove by induction that for every $k\ge 1$,
\begin{equation}\label{recurs}
\bbP\left[ \max_{(n,x)\in \lint 0,kT\rint \times \bbZ^d} \hat W_n(x)>v^k\right]\ge \left(\frac{c}{2v}\right)^{k}.
\end{equation}
Then \eqref{needed} can be deduced from \eqref{recurs} by choosing $v=v(\eps)$ such that $2 v^{-\gep/2}\le c$ and setting $C(\eps)= 2T(v)/(\log v)$. Indeed, for $u\geq u_0$ we have $k\coloneqq \lceil\log_v(u) \rceil\leq 2\log(u)/\log(v)$ and $v^{-(1+\frac{\gep}{2})}\geq u^{-\frac{\gep}{2}}$. Thus, by \eqref{recurs},
\begin{align*}
	\bbP\left[ \max_{(n,x)\in \lint 0,C \log u\rint \times \bbZ^d} \hat W^{\beta}_n(x)\ge u\right]\ge \bbP\left[ \max_{(n,x)\in \lint 0,kT\rint \times \bbZ^d} \hat W_n(x)>v^k\right]\geq v^{-k(1+\frac{\gep}{2})}\geq u^{-1-\gep}.
\end{align*}

Let us now prove \eqref{recurs} by induction. The case $k=1$ in \eqref{recurs} is simply \eqref{indstart}. For the induction step,  we let $A_k$ denote the event in \eqref{recurs} and define
$$(\sigma,Z)\coloneqq \min \left\{ (n,x)\in \N\times\Z^d \ \colon  \   \hat W_n(x)>v^k  \right \} $$
where $\min$ refers to the minimal element of the set for  the lexicographical order on $\N\times\Z^d$. By convention  $(\sigma,Z)=(\infty,0)$ if the set is empty. Recalling \eqref{shift2},
\begin{equation}
 \hat W_{n+m}(x+z)\ge \bbE\left[ e^{\beta H_{n+m}(X)-(n+m)\gl(\beta)}\ind_{\{X_m=z, X_{n+m}=x+z\}}\right]= \hat W_{m}(z) \theta_{m,z}  \hat W_{n}(x).
 \end{equation}
Hence  $\hat W_{n+m}(x+z)\ge v^{k+1}$ if $\hat W_{m}(z)\ge v^k$ and  $\theta_{m,z}\hat W_{n}(x)\ge v$.
Decomposing over the possible values of $(\sigma,Z))$ (recall \eqref{shift2}) we  have
\begin{equation}
\bigcup_{m\le kT , z\in  \bbZ^d}\big( \{(\sigma,Z)=(m,z) \}\cap \theta_{m,z} A_1 \big)\subset A_{k+1}.
\end{equation}
Furthermore for every $m$ and $z$ the events $(\sigma,Z)=(m,z)$ and $\theta_{m,z}A_1$ are independent since their realizations depend on disjoint regions of an i.i.d. environment.
Hence we have
\begin{equation}\begin{split}
 \bbP\left[A_{k+1}\right]&\ge \sumtwo{m\le kT}{z\in \bbZ^d} \bbP\left[ \{(\sigma,Z)=(m,z)\} \cap\theta_{m,z} A_1  \right],\\
&=\sumtwo{m\le kT}{z\in \bbZ^d}  \bbP\left[ (\sigma,Z)=(m,z)\right] \bbP\left[ \theta_{m,z} A_1  \right]= \bbP[A_{k}]\bbP[A_1],
\end{split}\end{equation}
where the last equality uses that $\bbP\left[ \theta_{m,z}A_1 \right]=\bbP[A_1]$ and that  $\{(\sigma,Z)=(m,z)\}_{m\le kT, z\in \bbZ^d}$ is a partition of the event $A_k$.
\end{proof}

\section{Intermezzo: Tail estimate for the partition function in a continuum setup } \label{intermezzo}

\subsection{Model and result}

We consider in this section a continuous-time variant of the directed polymer model. We let  $Y=(Y_t)_{t\ge 0}$ denote the continuous-time simple random walk on $\bbZ^d$, i.e., $Y$ has the same law as $(X_{N_t})_{t\geq 0}$ where $X$ is as before and $(N_t)_{t\geq 0}$ is an independent, homogeneous Poisson process of rate $1$. The law $Y$ is denoted by $\bP$. The environment $\eta=(\eta^{(x)}_t)_{x\in \bbZ^d, t\ge 0}$ is given by a collection of independent standard Brownian motions indexed by $\bbZ^{d}$, and we use the letter $\bbP$ for the associated probability distribution. Given a fixed realization of $\eta$ and $t\ge 0$, we set
\begin{equation}
\bP_{\eta,t}^\beta(\dd Y)\coloneqq\frac{1}{Z_t^\beta}e^{\beta \bH_t(\eta,Y)-\frac{\beta^2}{2}t}\bP(\dd Y),
\end{equation}
where the Hamiltonian $\bH_t(\eta,Y):= \sum_{x\in \bbZ^d}\int^t_{0} \ind_{\{Y_t=x\}}\dd \eta^{(x)}_t$ is  defined by integrating the environment along the trajectory $Y$ and 
  $Z_t^\beta:= \bE\big[ e^{\beta \bH_t(\eta,Y)-\frac{\beta^2}{2}t}\big]$.
We let $(\mathcal F_t)_{t\ge 0}$ denote the natural filtration associated with the environment $\eta$.
 As in the discrete case, one can check that $Z^{\beta}_t$ is a martingale with respect to this filtration.
We further define the point-to-point partition function by
\begin{equation}
 \hat Z_t^\beta(x)\coloneqq \bE\big[e^{\beta \bH_t(\eta,Y)-\frac{t\beta^2}{2}}\ind_{\{Y_t=x\}}\big].
\end{equation}
We assume that $\beta$ is such that \textit{strong disorder} holds, that is to say that  $\lim_{t\to \infty } Z^{\beta}_t=0$. The analogue of Theorem~\ref{b2bcrit} is also valid in that context (it is proved in \cite{BS10} for $d\ge 4$ and in \cite{BS11} for $d=3$) meaning that strong disorder implies that
\begin{equation}\label{strictl2}
\beta > \beta_2= \frac{1}{\sqrt{\int^\infty_0\bP( Y^{(1)}_t=   Y^{(2)}_t)}}.
\end{equation}
The main object of this section is to prove the following analogue of Theorem~\ref{locaend}.
\begin{theorem}\label{locaend2}
If strong disorder holds then there exists a constant $c>0$ such that for all $u\ge 1$
\begin{equation}
 \bbP\left[ \max_{t\ge 0, x\in \bbZ^d} \hat  Z^{\beta}_t(x)\ge u\right] \ge \frac{c}{u}.
 \end{equation}
\end{theorem}

\begin{remark}\label{remdrop}
We could in principle go further and also prove the analogue of Theorem~\ref{main} for the directed polymer in a Brownian environment. This would require some minor adaptation of the argument since, for instance, the proofs of Proposition~\ref{finitevol} and Proposition~\ref{spot} rely on the fact that $X$ cannot walk a distance greater than $n$ in $n$ steps ( one possibility for this would be to follow the approach used in  \cite{JL25}). This goes beyond our scope. Our sole reason to present the proof of Theorem~\ref{locaend2} is to expose more clearly the mechanism at work in the proof of Theorem~\ref{locaend}.
\end{remark}

\subsection{Proof strategy for Theorem~\ref{locaend2}}

Before getting to the details of the proof, we give an overview of the argument for the proof of Theorem~\ref{locaend2}. We denote the first hitting time of $u$ by
\begin{equation}\label{tauucont}
 \tau_u\coloneqq \inf\left\{ t>0 \ \colon  \ Z_t^\beta=u\right\}.
 \end{equation}
 The strong disorder assumption 
implies that (see the proof of Lemma~\ref{dull})
\begin{equation}\label{simple}
\bbP(\tau_u<\infty)=1/u.
\end{equation}
We want to show that conditional on $\tau_u<\infty$, with probability larger than $\delta>0$ (where $\delta$  does not depend on $u$)  we can find a random time $\cT>\tau_u$ which is such that two things occur:
\begin{itemize}
 \item [(A)] The end point is localized, that is, $\max_{x\ge 0}\bP^{\beta}_{\eta,\cT}(X_\cT=x)\ge \delta$.
 \item [(B)] $Z^{\beta}_t$ is not too small, or more specifically $Z^{\beta}_{\cT}\ge \delta u$.
\end{itemize}
Note that $(A)$ and $(B)$ combined imply that
$ \max_{x\in\bbZ^d} \hat Z^{\beta}_{{\cT}} (x)\ge \delta^2 u$. Hence, if we show that the two conditions can be satisfied with probability $\delta$, we have
\begin{equation}
 \bbP\left(\max_{t\ge 0,x\in\bbZ^d} \hat Z^{\beta}_{t}(x)\ge \delta^2 u\right) \ge \delta \bbP(\tau_u<\infty)= \delta u^{-1},
\end{equation}
which is equivalent to Theorem~\ref{locaend2}.

\subsubsection*{Connection between the partition function and endpoint replica overlap}
It is convenient at this point to introduce the analogs of \eqref{endpoint} and \eqref{overlap},
\begin{equation}\label{def_It}
 \mu^{\beta}_{t}(x)\coloneqq \bP^{\beta}_{\eta,t}(Y_t=x) \quad  \text{ and } \quad
I_t\coloneqq  \sum_{x\in \bbZ^d} \mu^{\beta}_{t} (x)^2.
\end{equation}
The quantity $I_t$ is naturally associated with the $\log$-partition function $\log Z^{\beta}_t$. More precisely, from Itô's formula we have
\begin{equation}\label{eq:ito}
 \log Z^{\beta}_t= M_t-\frac{\beta^2}{2} \int^t_0 I_s \dd s,
 \end{equation}
where $(M_t)_{t\ge 0}$ is the martingale defined by $ M_t=\int^t_0\frac{\dd Z^{\beta}_s}{Z^{\beta}_s}.$
The quantity $I_t$ also appears a second time when computing the quadratic variation of $\log Z^{\beta}_t$ (which is also that of $M_t$)
\begin{equation} \langle M \rangle_t=   \beta^2 \int^t_0 I_s \dd s.
 \end{equation}
 The above connection between the $\log$-partition function and the replica overlap for the endpoint is not specific to the Brownian polymer, it also holds (although in a weaker form) for the discrete model. It has been extensively used to prove localization results both in the very strong disorder phase \cite{BC20,CH02,CSY03} and in the strong disorder phase \cite{CH06,Y10}. The reasoning we develop to find a localization time -- that is, to find $t>\tau_u$ for which $(A)$ and $(B)$ are satisfied -- relies on this connection and can be decomposed into three steps.

 \subsubsection*{Step 1: The integrated endpoint replica overlap is large}

Fix some parameter $K>1$ and observe from \eqref{simple} that if $\tau_u<\infty$ then with conditional probability $1/K$ we also have $\tau_{Ku}<\infty$ (our choice of $K$ will be such that $1/K$ is larger than $\delta$).
Within the time interval $(\tau_u,\tau_{Ku}]$, the supermartingale $\log Z^{\beta}_t$ increases by an amount $\log K$, so due to \eqref{eq:ito} we must have
\begin{equation}
	M_{\tau_{Ku}}-M_{\tau_u}\geq \log K.
\end{equation}
Using stochastic calculus, we can show that this implies that with probability larger than  $\frac1K-\frac1{K^2}$ the increment of the quadratic variation  $\langle M\rangle_{\tau_{Ku}}-\langle M\rangle_{\tau_u}$ is larger than $(\log K)/4$. This is done in Proposition~\ref{keyprop1}.

 \subsubsection*{Step 2: $I_s$ cannot be small everywhere on $(\tau_u,\tau_{Ku}]$}
The hard part is to show that if $\int^{\tau_{Ku}}_{\tau_u} I_t\ \dd t$ is large then, most likely, there exists a time $t\in (\tau_u,\tau_{Ku}]$ such that $I_t\ge \delta$. For this latter step, we borrow tools and ideas which were originally used to prove that endpoint localization occurs infinitely often in the strong disorder regime. They were introduced in \cite{CH02,CH06}. In  \cite{Y10}
the argument was adapted to a more general setup with an improved presentation. The idea to use these techniques to derive upper tail bounds has appeared in \cite{J22} in the weak disorder regime. 
Both the original argument presented in \cite{CH02,CH06,Y10} and our elaboration on it are very technical. 

\smallskip Let us try to give a rough idea of the argument. It relies on the study of  an \textit{ad-hoc} continuous process $(J_t)_{t\ge 0}$ (introduced in a discrete setup in \cite[Equation (3.3)]{Y10} as $(X_t)_{t\ge 0}$). The quantity $J_t$ is obtained by taking the scalar product between  $\mu^{\beta}_t$ and $G \mu^{\beta}_t$ where $G$ is the Green function associated to the random walk.
The key point of the arguments are that:
\begin{itemize}
 \item [(i)] The process $(J_t)_{t\ge 0}$ is bounded,
 \item [(ii)] Whenever $I_t\le \delta$, the \textit{drift} of $J_t$ (the derivative of the finite variation part in the semimartingale decomposition of $J$) is positive and proportional to $I_t$.
\end{itemize}
The specific choice we make for $J_t$ is crucial to make property $(ii)$ valid. Another requirement for the proof is that we must have $\beta\ne \beta_2$ (which by Theorem \ref{b2bcrit} holds whenever strong disorder holds). We provide more detailed explanation on this specific point after the proof in Section~\ref{expl}.

\smallskip Now, if  $\int^{\tau_{Ku}}_{\tau_u} I_t\ \dd t$ is large  and $I_t\le \delta$ on the interval $(\tau_u,\tau_{Ku}]$, $(ii)$ implies that the integrated drift of $J_t$ over the interval  $(\tau_u,\tau_{Ku}]$ is also large. On the other hand, the probability that the martingale part of $J_t$ ``compensates'' this drift is small, and  thus we obtain a contradiction with the fact that the process is bounded.

 \subsubsection*{Step 3: There is no ``dip'' between $\tau_u$ and $\tau_{Ku}$}
The combination of the first two steps allows to find $t\in (\tau_u,\tau_{Ku}]$ that satisfies $(A)$.
To check that it also satisfies $(B)$, we show that the probability that $Z^{\beta}_t$ drops below level $u/K$ within the interval 
$(\tau_u,\tau_{Ku}]$ is small. This is proved in Lemma~\ref{keylemma}.

\subsection{Organization for the proof of Theorem~\ref{locaend2}}

 We introduce the short-hand notation $\bbP_u=\bbP\left[ \ \cdot \   | \ \cF_{\tau_u}\right]$.
The following two results correspond to the first two steps of the argument.

 \begin{proposition}\label{keyprop1}
If strong disorder holds, then for any $K>1$ we have
 \begin{equation}\label{rrrr}
    \bbP_u\left( \tau_{Ku}<\infty \ ; \ \langle M \rangle_{\tau_{Ku}}-\langle M \rangle_{\tau_{u}}<\frac{\log K}{4}\right)\le \frac{1}{K^2}
 \end{equation}
on the event $\{\tau_u<\infty\}$.
 \end{proposition}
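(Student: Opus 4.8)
The plan is to work conditionally on $\cF_{\tau_u}$, on the event $\{\tau_u<\infty\}$ (which we may assume throughout since otherwise the statement is vacuous), and to exploit the relation \eqref{eq:ito} together with the fact that $\langle M\rangle_t = \beta^2\int_0^t I_s\,\dd s$. First I would note that on $\{\tau_{Ku}<\infty\}$ the supermartingale $\log Z^\beta_t$ increases by exactly $\log K$ over $[\tau_u,\tau_{Ku}]$, so from \eqref{eq:ito} the martingale increment satisfies $M_{\tau_{Ku}}-M_{\tau_u} = \log K + \frac{\beta^2}{2}\int_{\tau_u}^{\tau_{Ku}} I_s\,\dd s \geq \log K$. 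Hence the event in \eqref{rrrr} is contained in
\begin{equation}
 \left\{ M_{\tau_{Ku}}-M_{\tau_u}\geq \log K \ ; \ \langle M\rangle_{\tau_{Ku}}-\langle M\rangle_{\tau_u} < \tfrac{\log K}{4}\right\}.
\end{equation}
So it suffices to bound the probability of this latter event by $K^{-2}$.

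The key step is a standard exponential martingale (Bernstein-type) estimate for continuous local martingales. Consider the stopped martingale $N_t := M_{(\tau_u+t)\wedge\tau_{Ku}} - M_{\tau_u}$, which starts at $0$ and has quadratic variation $\langle N\rangle_t = \langle M\rangle_{(\tau_u+t)\wedge\tau_{Ku}} - \langle M\rangle_{\tau_u}$. For $\lambda>0$ the process $\exp(\lambda N_t - \tfrac{\lambda^2}{2}\langle N\rangle_t)$ is a nonnegative local martingale, hence a supermartingale, with initial value $1$; this holds conditionally on $\cF_{\tau_u}$. On the event in question we have $N_\infty \geq \log K$ and $\langle N\rangle_\infty < (\log K)/4$, so $\lambda N_\infty - \tfrac{\lambda^2}{2}\langle N\rangle_\infty > \lambda\log K - \tfrac{\lambda^2}{8}\log K$. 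Taking $\lambda = 4$ gives $\lambda\log K - \tfrac{\lambda^2}{8}\log K = 4\log K - 2\log K = 2\log K$, so on that event $\exp(4N_\infty - 8\langle N\rangle_\infty) > K^2$. By the optional stopping / supermartingale inequality applied conditionally, $\bbP_u$-a.s.,
\begin{equation}
 \bbP_u\left(M_{\tau_{Ku}}-M_{\tau_u}\geq \log K \ ; \ \langle M\rangle_{\tau_{Ku}}-\langle M\rangle_{\tau_u} < \tfrac{\log K}{4}\right) \leq \bbP_u\left( \sup_{t}\exp(4N_t - 8\langle N\rangle_t) > K^2\right) \leq \frac{1}{K^2}.
\end{equation}

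The main obstacle I anticipate is not the computation but the technical justification that $\exp(\lambda N_t - \tfrac{\lambda^2}{2}\langle N\rangle_t)$ is genuinely a supermartingale (rather than merely a local martingale) and that the maximal inequality can be applied up to the possibly infinite time $\tau_{Ku}$: one should argue via Fatou's lemma after localizing, using that $\langle N\rangle$ is bounded on the relevant event or introducing a further stopping time at which $\langle N\rangle$ first exceeds $(\log K)/4$ so that the exponent stays controlled. A minor point to handle with care is that $\tau_u$ is defined via hitting the level $u$ exactly, which is legitimate here because $Z^\beta_t$ has continuous paths in the Brownian environment; this continuity is also what makes $\log Z^\beta_{\tau_{Ku}}-\log Z^\beta_{\tau_u}=\log K$ an exact identity. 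Once these points are in place, the choice $\lambda=4$ is exactly what is needed to turn the $1/4$ in the statement into the $1/K^2$ bound.
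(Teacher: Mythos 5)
Your proposal is correct and takes essentially the same route as the paper: both rely on the observation from \eqref{eq:ito} that $M_{\tau_{Ku}}-M_{\tau_u}\ge \log Z^\beta_{\tau_{Ku}}-\log Z^\beta_{\tau_u}=\log K$, combined with an exponential-supermartingale bound on the probability that a continuous martingale travels at least $v$ while accumulating quadratic variation at most $a$, with the optimal tilt $\lambda=v/a=4$. The paper packages this estimate as Lemma~\ref{largedevcont} (using the hitting time $T_v$ and letting $t\to\infty$ via Fatou), whereas you inline it and phrase it as a maximal inequality for the nonnegative supermartingale $\exp(4N_t-8\langle N\rangle_t)$; these are equivalent. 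The technical point you flag — that the stochastic exponential is a supermartingale rather than a true martingale, so one only gets $1\ge\bbE[\cdots]$ in the limit — is indeed what is implicitly used, and it is all that is needed.
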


\begin{proposition}\label{keyprop2}
 If strong disorder holds and $\delta$ is sufficiently small, then 
 \begin{equation}
  \bbP_u\left(  \langle M \rangle_{\tau_{Ku}}-\langle M \rangle_{\tau_{u}}\ge\frac{\log K}{4}\ ; \ \max_{t\in(\tau_u,\tau_{Ku}]  }
  I_t\le \delta \right)\le \frac{1}{K^2}
 \end{equation}
 on the event $\{\tau_u<\infty\}$.
\end{proposition}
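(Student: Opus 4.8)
The plan is to realise the three-step strategy sketched above, producing the auxiliary process $J_t$ by adapting the construction of \cite{CH02,CH06,Y10} (in the upper-tail spirit of \cite{J22}). Recall that $\langle M\rangle_t=\beta^2\int_0^t I_s\,\dd s$, so the event under consideration asserts that on $[\tau_u,\tau_{Ku}]$ the replica overlap stays below $\delta$ while its integral $\int_{\tau_u}^{\tau_{Ku}}I_s\,\dd s\ge \tfrac{1}{4\beta^2}\log K$ is large.

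The first and principal task is to construct, from the endpoint distribution $\mu^\beta_t$ and the Green's function $G$ of the continuous-time simple random walk (finite since $d\ge 3$), a continuous semimartingale $t\mapsto J_t$ with decomposition $J_t=J_0+N_t+V_t$, where $N$ is a continuous local martingale and $V$ is continuous of finite variation, such that, for suitable constants $0<c\le C_1<\infty$ and $B<\infty$ depending only on $(\beta,d)$ and provided $\delta$ is small enough:
\begin{itemize}
 \item[(i)] $0\le J_t\le B$ for all $t\ge 0$;
 \item[(ii)] $\dd V_t\ge c\,I_t\,\dd t$ on the random set $\{t\colon I_t\le\delta\}$;
 \item[(iii)] $\dd\langle N\rangle_t\le C_1\,I_t\,\dd t$ on the same set.
\end{itemize}
Property (i) expresses transience of the walk; property (ii), the positivity of the drift of $J$ when the overlap is small, is where the inequality $\beta>\beta_2$ forced by strong disorder (cf.\ \eqref{strictl2}) enters. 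Carrying out this construction and verifying (i)--(iii) --- especially the drift lower bound (ii) --- is the technical core of the section, and is where we expect the real difficulty to lie; the remainder is the bookkeeping that turns (i)--(iii) into the stated estimate, which we now describe.

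Condition on $\cF_{\tau_u}$, work on $\{\tau_u<\infty\}$, and write $E$ for the event in the proposition. Let $\theta\coloneqq\inf\{t>\tau_u\colon I_t>\delta\}\wedge\tau_{Ku}$; on $E$ one has $\theta=\tau_{Ku}$, hence $\int_{\tau_u}^{\theta}I_s\,\dd s\ge\tfrac{1}{4\beta^2}\log K$. Put $N'_t\coloneqq N_{(\tau_u+t)\wedge\theta}-N_{\tau_u}$, $V'_t\coloneqq V_{(\tau_u+t)\wedge\theta}-V_{\tau_u}$ and $J'_t\coloneqq J_{(\tau_u+t)\wedge\theta}-J_{\tau_u}=N'_t+V'_t$; these are adapted to the shifted filtration $(\cF_{\tau_u+t})_{t\ge0}$, and $N'$ is a continuous local martingale with $N'_0=0$. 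Since $I_t\le\delta$ on $[\tau_u,\theta]$, (ii) makes $V'$ non-decreasing, so by (i) one has $N'_t=J'_t-V'_t\le B$ for all $t$. On $E$, $\sup_t V'_t\ge c\int_{\tau_u}^{\theta}I_s\,\dd s\ge \tfrac{c}{4\beta^2}\log K=:\ell_K$, so $N'$ eventually falls below $B-\ell_K$; with $\rho\coloneqq\inf\{t\ge0\colon N'_t\le B-\ell_K\}$ we get $\rho<\infty$ and $N'_\rho=B-\ell_K$ on $E$ by continuity, whence $V'_\rho=J'_\rho-N'_\rho\le\ell_K$, and therefore, by (ii) then (iii), $\int_{\tau_u}^{(\tau_u+\rho)\wedge\theta}I_s\,\dd s\le\ell_K/c$ and $\langle N'\rangle_\rho\le C_1\ell_K/c$.

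Thus $E$ is contained in the event that the continuous local martingale $N'$ attains the level $-(\ell_K-B)$ before $\langle N'\rangle$ exceeds $C_1\ell_K/c$. Using the exponential bound for continuous local martingales vanishing at the origin, $\bbP(\exists t\colon N'_t\le-a,\ \langle N'\rangle_t\le b)\le e^{-a^2/(2b)}$ for $a,b>0$, applied with $a=\ell_K-B$ and $b=C_1\ell_K/c$, we obtain
\begin{equation}
 \bbP_u(E)\ \le\ \exp\!\left(-\frac{c\,(\ell_K-B)^2}{2\,C_1\,\ell_K}\right).
\end{equation}
Since $\ell_K=\tfrac{c}{4\beta^2}\log K\to\infty$ with $K$, for $K$ large we have $(\ell_K-B)^2\ge\ell_K^2/4$, so $\bbP_u(E)\le\exp(-\tfrac{c\,\ell_K}{8C_1})=K^{-c^2/(32C_1\beta^2)}$. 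The normalisation $\tfrac{1}{4}$ in the statement, together with a sufficiently small choice of $\delta$ (which governs the ratio $c^2/C_1$), is arranged precisely so that this exponent is at least $2$; in any case an exponent strictly larger than $1$ is all that the combination with Proposition~\ref{keyprop1} needs. The hard part of the argument is thus not this computation but the construction of $J_t$ and, within it, the drift lower bound (ii).
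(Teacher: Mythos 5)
Your strategy matches the paper's: build a bounded process $J_t$ from $\mu^\beta_t$ and the Green's operator, establish a drift lower bound of order $I_t$ and a quadratic-variation upper bound of order $\sqrt{\delta}\,I_t$ on $\{I_t\le\delta\}$, then conclude via the exponential concentration inequality of Lemma~\ref{largedevcont}. Your properties (i)--(iii) are exactly what the paper establishes (respectively \eqref{boundJ}; Proposition~\ref{itoto} together with Lemma~\ref{splash}; the formula for $N_t$ in \eqref{ntat} together with Lemma~\ref{splash}), and your observation that (ii) is where $\beta>\beta_2$ enters is correct. The bookkeeping in your second half is sound, though somewhat heavier than the paper's: the paper stops at $\tau'\coloneqq\inf\{t\ge\tau_u\colon\int_{\tau_u}^t I_s\,\dd s=\frac{\log K}{4\beta^2}\}$, which makes the bound on $\langle N\rangle_{\tau'}-\langle N\rangle_{\tau_u}$ automatic and avoids your auxiliary stopping time $\rho$. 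Both schemes give an exponent of order $c^2/(\beta^2C_1)$, made as large as desired by taking $\delta$ small, since $C_1=O(\sqrt{\delta})$ while $c\to\beta^2g(0)-1>0$.

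The genuine gap: the whole technical substance of the proof --- identifying $J_t=(\mu^\beta_t,G\mu^\beta_t)$, computing its It\^o decomposition (Proposition~\ref{itoto}, obtained by differentiating the ratio $\hat Z^\beta_t/Z^\beta_t$ via \eqref{dhatz}--\eqref{itoj} and collecting the drift contributions $(\beta^2 g(0)-1)I_t$, $3\beta^2 I_tJ_t$ and $-4\beta^2((\mu^\beta_t)^2,G\mu^\beta_t)$), and showing via the H\"older estimate of Lemma~\ref{splash} that the two error terms are bounded by $\|g\|_4 I_t^{5/4}$, hence subordinate to $(\beta^2 g(0)-1)I_t$ once $I_t\le\delta$ (using that $\|g\|_4<\infty$ for $d\ge 3$) --- is deferred as ``the real difficulty'', which you rightly anticipate. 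As written, your proposal establishes only the reduction of the statement to (i)--(iii); it does not prove them, and that verification is most of Section~\ref{seckeyprop}.
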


Now for the third step, we set $
  \sigma_{u,K}\coloneqq  \inf\{ t\ge \tau_u \ \colon  \  Z^{\beta}_t\le u/K   \}.$
\begin{lemma}\label{keylemma}
If strong disorder holds, then on the event $\{\tau_u<\infty\}$
\begin{equation}
 \bbP_u( \sigma_{u,K}<\tau_{Ku}<\infty)=\frac{1}{K(K+1)}.
\end{equation}
\end{lemma}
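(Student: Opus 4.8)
The plan is to use nothing beyond the fact that, under strong disorder, $(Z^\beta_t)_{t\ge0}$ is a nonnegative continuous martingale with $Z^\beta_\infty=0$ — exactly the input used in the proof of Lemma~\ref{dull} and of \eqref{simple}. We work throughout on $\{\tau_u<\infty\}$, where $Z^\beta_{\tau_u}=u$ by path-continuity, and we record at the outset that $\sigma_{K,u}<\infty$ $\bbP$-a.s.\ since $Z^\beta_t\to0<u/K$, and that $\tau_u\le\tau_{Ku}$ (a continuous path started at $Z^\beta_0=1\le u$ must reach level $u$ no later than level $Ku$).

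First I would establish the one-sided estimate $\bbP_u(\tau_{Ku}<\infty)=1/K$. The stopped martingale $(Z^\beta_{t\wedge\tau_{Ku}})_{t\ge\tau_u}$ is bounded above by $Ku$ (by continuity, $Z^\beta_t<Ku$ for $t<\tau_{Ku}$ and $Z^\beta_{\tau_{Ku}}=Ku$), so optional stopping up to a deterministic time $T$ gives $\bbE_u[Z^\beta_{\tau_{Ku}\wedge T}]=u$; letting $T\to\infty$ and using $Z^\beta_\infty=0$ with dominated convergence yields $u=Ku\,\bbP_u(\tau_{Ku}<\infty)$. Next I would compute the two-sided exit probability using the a.s.\ finite stopping time $\rho:=\sigma_{K,u}\wedge\tau_{Ku}$: the stopped martingale $(Z^\beta_{t\wedge\rho})_{t\ge\tau_u}$ stays in $[u/K,Ku]$ and, by continuity, equals $u/K$ on $\{\sigma_{K,u}<\tau_{Ku}\}$ and $Ku$ on $\{\tau_{Ku}<\sigma_{K,u}\}$ (the two hitting times are a.s.\ distinct since $u/K\neq Ku$). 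Optional stopping then gives
\[
u=\bbE_u\big[Z^\beta_\rho\big]=\frac uK\,\bbP_u(\sigma_{K,u}<\tau_{Ku})+Ku\,\bbP_u(\tau_{Ku}<\sigma_{K,u}),
\]
which, combined with $\bbP_u(\sigma_{K,u}<\tau_{Ku})+\bbP_u(\tau_{Ku}<\sigma_{K,u})=1$, yields $\bbP_u(\tau_{Ku}<\sigma_{K,u})=\tfrac1{K+1}$.

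Finally, since $\sigma_{K,u}<\infty$ a.s.\ and $\sigma_{K,u}\neq\tau_{Ku}$ a.s., the event $\{\tau_{Ku}<\infty\}$ is the disjoint union (up to a null event) of $\{\tau_{Ku}<\sigma_{K,u}\}$ and $\{\sigma_{K,u}<\tau_{Ku}<\infty\}$, so
\[
\bbP_u(\sigma_{K,u}<\tau_{Ku}<\infty)=\bbP_u(\tau_{Ku}<\infty)-\bbP_u(\tau_{Ku}<\sigma_{K,u})=\frac1K-\frac1{K+1}=\frac1{K(K+1)}.
\]
There is no serious obstacle here; the only points to treat with care are the justification of optional stopping at the possibly infinite times $\tau_{Ku}$ and $\rho$ — handled by truncating at a deterministic time and passing to the limit, using the uniform bound $Z^\beta\le Ku$ up to these times together with $Z^\beta_\infty=0$ — and the repeated appeal to path-continuity of $t\mapsto Z^\beta_t$ to identify its values at $\tau_u$, $\sigma_{K,u}$ and $\tau_{Ku}$.
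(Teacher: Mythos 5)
Your proof is correct, and it rests on the same ingredients as the paper's — optional stopping for the nonnegative martingale $Z^\beta$, path continuity to identify hitting values, and $Z^\beta_\infty=0$ to kill the tail in the limit — but the bookkeeping is organized differently. The paper computes $\bbP_u(\sigma_{K,u}<\tau_{Ku})=\tfrac{K}{K+1}$ by a two-sided gambler's-ruin from level $u$, then restarts at $\sigma_{K,u}$ and applies optional stopping again to get $\bbP_u(\tau_{Ku}<\infty\mid\sigma_{K,u}<\tau_{Ku})=\tfrac1{K^2}$, and multiplies. You instead reuse the one-sided identity $\bbP_u(\tau_{Ku}<\infty)=\tfrac1K$ (which is the conditional version of \eqref{simple} and Lemma~\ref{dull}, already available in the paper) together with the complementary two-sided estimate $\bbP_u(\tau_{Ku}<\sigma_{K,u})=\tfrac1{K+1}$, and subtract, using the a.s.\ finiteness of $\sigma_{K,u}$ and continuity to justify the disjoint decomposition $\{\tau_{Ku}<\infty\}=\{\tau_{Ku}<\sigma_{K,u}\}\sqcup\{\sigma_{K,u}<\tau_{Ku}<\infty\}$. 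Your additive route avoids the explicit restart at $\sigma_{K,u}$ and leans more heavily on a fact already proved; the paper's multiplicative route is self-contained within the lemma. Both are equally rigorous and equally short; the choice is purely a matter of taste.
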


Proposition~\ref{keyprop1} and Lemma~\ref{keylemma} have short and direct proofs which we include at the end of this subsection. 
The proof Proposition~\ref{keyprop2} on the other hand requires considerable work and is given in Section~\ref{seckeyprop}.
Let now show how the combination of the three results yields the main theorem.

\begin{proof}[Proof of Theorem~\ref{locaend2}]

We use the following inclusion

\begin{equation}\label{incluclu} \left\{  \tau_{Ku}<\sigma_{u,K} \ ;  \langle M \rangle_{\tau_{Ku}}-\langle M \rangle_{\tau_{u}} \ge \frac{\log K}{4}\ ; \ \max_{t\in(\tau_u,\tau_{Ku}]  }
  I_t> \delta  \right\}
  \subset \left\{ \maxtwo{t\ge 0}{x\in \bbZ^d} \hat Z^{\beta}_t(x)\ge \frac{\delta u}{K} \right\}.
  \end{equation}
  Indeed, if we define $T\coloneqq \inf\{t>\tau_u \ \colon  \ I_t\ge \delta \}$, then on the first event we have $T\le \sigma_{u,K}$. Hence
  $Z^{\beta}_{T}\ge u /K$ and, recalling \eqref{def_It},
  \begin{equation}
\max_{x\in \bbZ^d} \hat Z^{\beta}_{T}(x)= Z^{\beta}_{T} \max_{x\in \bbZ^d}\mu^{\beta}_{T}(Y_T=x)\ge Z^{\beta}_{T} I_{T}\ge \frac{\delta u}{K}.
\end{equation}
Now, using Propositions~\ref{keyprop1} and~\ref{keyprop2} as well as Lemma~\ref{keylemma}, we have
\begin{multline}\label{frombelow}
  \bbP_u\left(  \tau_{Ku}<\sigma_{u,K} \ ; \langle M \rangle_{\tau_{Ku}}-\langle M \rangle_{\tau_{u}}\ge \frac{\log K}{4}\ ; \ \max_{t\in(\tau_u,\tau_{Ku}]  }
  I_t> \delta  \right)\\ \ge   \bbP_u(  \tau_{Ku}<\infty) -
  \bbP_u\left( \sigma_{u,K}<\tau_{Ku}<\infty \right)-  \bbP_u\left( \tau_{Ku}<\infty \ ; \   \langle M \rangle_{\tau_{Ku}}-\langle M \rangle_{\tau_{u}} <\frac{\log K}{4}\right)\\-\bbP_u\left(   \langle M \rangle_{\tau_{Ku}}-\langle M \rangle_{\tau_{u}}\ge  \frac{\log K}{4} \ ; \ \max_{t\in(\tau_u,\tau_{Ku}]  }
  I_t\le \delta\right) \ge \frac{1}{K}-\frac{3}{K^2}.
\end{multline}
Taking expectation over the event $\{\tau_u<\infty\}$ (recall \eqref{simple})
we obtain
\begin{equation}
 \bbP\left( \maxtwo{t\ge 0}{x\in \bbZ^d} \hat Z^{\beta}_t(x)\ge \frac{\delta u}{K}\right)\ge \frac{1}{u}\left(\frac{1}{K}-\frac{3}{K^2}\right).\tag*{\qedhere}
\end{equation}

\end{proof}

The proof of Proposition~\ref{keyprop1} boils down to applying the following classical estimate for which we include a proof.
\begin{lemma}\label{largedevcont}
 Let $N_t$ be a continuous-time martingale such that $t\mapsto N_t$ is almost surely continuous, starting at $0$. Let $v>0$ and $T_v$ be the first time that
 $N_t$ reaches $v$.
For any $a>0$,
 \begin{equation}
  \bbP[ \langle N\rangle_{T_v}\le  a \ ; \ T_v<\infty ] \le e^{-\frac{v^2}{2a}}.
 \end{equation}
In particular, for any stopping time $T'$, 
\begin{equation}
	\bbP[ \langle N\rangle_{T'}\le  a \ ; \ N_{T'}\geq v\ ; \ T'<\infty ] \le e^{-\frac{v^2}{2a}}.
\end{equation}
\end{lemma}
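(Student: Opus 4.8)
\textbf{Proof proposal for Lemma~\ref{largedevcont}.}

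The plan is to exploit the standard exponential supermartingale associated with a continuous martingale. Fix $\lambda>0$ and consider the process $E^\lambda_t\coloneqq \exp\!\big(\lambda N_t-\tfrac{\lambda^2}{2}\langle N\rangle_t\big)$. Since $t\mapsto N_t$ is almost surely continuous, $(E^\lambda_t)_{t\ge 0}$ is a nonnegative local martingale, hence a supermartingale, with $E^\lambda_0=1$. The first step is to apply the optional stopping theorem to $(E^\lambda_t)$ at the bounded stopping times $T_v\wedge t$, which gives $\bbE[E^\lambda_{T_v\wedge t}]\le 1$ for every $t$.

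Next I would localize on the event $\{\langle N\rangle_{T_v}\le a\,;\,T_v<\infty\}$. On this event, continuity of $N$ forces $N_{T_v}=v$ (the level $v$ is actually reached), so the integrand on that event satisfies $E^\lambda_{T_v}=\exp\!\big(\lambda v-\tfrac{\lambda^2}{2}\langle N\rangle_{T_v}\big)\ge \exp\!\big(\lambda v-\tfrac{\lambda^2}{2}a\big)$, using $\langle N\rangle_{T_v}\le a$ and $\lambda>0$. Therefore, restricting the expectation $\bbE[E^\lambda_{T_v\wedge t}]$ to this event and letting $t\to\infty$ (using Fatou's lemma, since $E^\lambda\ge 0$ and $E^\lambda_{T_v\wedge t}\to E^\lambda_{T_v}$ on $\{T_v<\infty\}$), we get
\begin{equation}
 e^{\lambda v-\frac{\lambda^2}{2}a}\,\bbP[\langle N\rangle_{T_v}\le a\,;\,T_v<\infty]\le \bbE\big[E^\lambda_{T_v}\ind_{\{\langle N\rangle_{T_v}\le a\,;\,T_v<\infty\}}\big]\le 1.
\end{equation}
Rearranging yields $\bbP[\langle N\rangle_{T_v}\le a\,;\,T_v<\infty]\le e^{-\lambda v+\frac{\lambda^2}{2}a}$, and optimizing over $\lambda>0$ with the choice $\lambda=v/a$ gives the bound $e^{-v^2/(2a)}$.

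For the second assertion, I would simply observe that on the event $\{N_{T'}\ge v\,;\,T'<\infty\}$, continuity of $N$ implies $T_v\le T'$ and hence, since $\langle N\rangle$ is nondecreasing, $\langle N\rangle_{T_v}\le \langle N\rangle_{T'}\le a$ on the further intersection with $\{\langle N\rangle_{T'}\le a\}$; thus this event is contained in $\{\langle N\rangle_{T_v}\le a\,;\,T_v<\infty\}$ and the first bound applies. The only mildly delicate point — and the place to be careful — is the passage to the limit $t\to\infty$ and the justification that $N$ genuinely attains the level $v$ on $\{T_v<\infty\}$ (so that $N_{T'}\ge v$ really does force $T_v\le T'$): both rely on path-continuity, which is exactly the hypothesis, so there is no real obstacle here, only bookkeeping with Fatou's lemma applied to the nonnegative supermartingale $E^\lambda$.
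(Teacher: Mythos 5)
Your proof is correct and follows essentially the same route as the paper: both rest on the exponential (super)martingale $\exp(\lambda N_{t\wedge T_v}-\tfrac{\lambda^2}{2}\langle N\rangle_{t\wedge T_v})$, pass to the limit via Fatou, lower-bound the integrand on the target event using path-continuity (so $N_{T_v}=v$) and $\langle N\rangle_{T_v}\le a$, and handle the second claim via $T_v\le T'$ on $\{N_{T'}\ge v\}$ together with monotonicity of $\langle N\rangle$. The only (harmless) difference is presentational: you keep $\lambda$ free and optimize at the end, and you correctly write the supermartingale bound $\bbE[E^\lambda_{T_v\wedge t}]\le 1$ where the paper writes an equality — your version is in fact the more careful one, since the stochastic exponential is in general only a supermartingale.
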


\begin{proof}
Using the stochastic exponential of the martingale $N_{T_v\wedge t}$, we have
\begin{equation}
1=\bbE\left[ e^{ \frac{v}{a} N_{T_v\wedge t}- \frac{v^2}{2a^2}\langle N\rangle_{T_v\wedge t} }\right].
\end{equation}
Letting $t\to \infty$, we have
$$1\ge \bbE\left[ e^{ \frac{v}{a} N_{T_v}- \frac{v^2}{2a^2} \langle N\rangle_{T_v} }\ind_{\{T_v<\infty\}}  \right]
\ge e^{\frac{v^2}{2a}}  \bbP\left[ \ \langle N\rangle_{T_v}\le a \ ; \ T_v<\infty \ \right],$$
which yields the desired conclusion. For the second claim, we to note that $T_v\leq T'$ on the event $N_{T'}\geq v$, hence $\langle N\rangle_{T_v}\leq \langle N\rangle_{T'}$.
\end{proof}

\begin{proof}[Proof of Proposition~\ref{keyprop1}]
 We consider the martingale $(M_{s+\tau_u}-M_{\tau_u})$ with the filtration $(\cF_{s+\tau_u})$, and apply Lemma~\ref{largedevcont}  with 
 $v=\log K$, $a=\log K/4$  and $T_v=\inf\{s\geq 0\colon  M_{s+\tau_u}>M_{\tau_u}+v\}$.
 We obtain 
 \begin{equation}\label{rrrrr}
   \bbP_u\left( T_{v}<\infty \ ; \ \langle M \rangle_{T_v+\tau_u}-\langle M \rangle_{\tau_{u}}<\frac{\log K}{4}\right)\le K^{-2}.
 \end{equation}
Strictly speaking, Lemma~\ref{largedevcont} yields the result for $\bbP$ but its proof can be replicated \textit{verbatim} to obtain the inequality for the conditional expectation $\bbP_u$.
Now, due to \eqref{eq:ito} we have 
\begin{equation}
M_{\tau_{Ku}}-M_{\tau_{u}}\ge \log Z_{\tau_{Ku}}- \log Z_{\tau_u}=\log K
 \end{equation}
 and hence $T_v+\tau_u\le \tau_{Ku}$. Thus \eqref{rrrrr} implies \eqref{rrrr}.
\end{proof}

\begin{proof}[Proof of Lemma~\ref{keylemma}]
We consider the martingale $(Z^{\beta}_{s+\tau_u})$ with the filtration $(\cF_{s+\tau_u})$. 
Applying the optional stopping theorem at  time $(\sigma_{u,K}\wedge \tau_{Ku})-\tau_u$ we have
\begin{equation}
  \bbP_u( \sigma_{u,K}<\tau_{Ku})=\frac{K-1}{K-(1/K)}=\frac{K}{K+1}.
\end{equation}
We have used that $\lim_{t\to\infty}Z_t=0$. In the same manner, considering the martingale $Z^{\beta}_{s+\sigma_{u,K}}$ on the event  $\{\sigma_{u,K}<\tau_{Ku}\}$ and applying  the optional stopping theorem at $\tau_{Ku}$, we obtain
\begin{equation}
   \bbP_u( \tau_{Ku}<\infty  \ | \ \sigma_{u,K}<\tau_{Ku} )=\frac{1}{K^2},
\end{equation}
which allows to conclude.
\end{proof}

\subsection{Proof of Proposition~\ref{keyprop2}}\label{seckeyprop}

We define $G$ to be Green's operator associated with the random walk obtained by taking the difference of two independent copies of $Y$. We set
\begin{equation}\label{defg}
 g(x)=\int^{\infty}_0 \bP^{\otimes 2}(Y^{(1)}_t-Y^{(2)}_t=x)\dd t \quad  \text{ and } \quad  G(x,y)=g(x-y).
\end{equation}
Note that  $G=(-2\Delta)^{-1}$, where $\Delta$ is the discrete Laplace operator on $\bbZ^d$, $\Delta=D-\mathrm{id}$ where $\mathrm id$ denotes the identity. Note also that $2\Delta$ is the generator of the walk $(Y^{(1)}_t-Y^{(2)}_t)_{t\ge 0}$.
Furthermore, under the strong disorder assumption, by \eqref{strictl2} we have
\begin{align}\label{g0positive}
	\beta^2g(0)>1.
\end{align}

Given a function $a\colon \bbZ^d\to \bbZ$, the function $Ga$ is defined by
setting $Ga(x)\coloneqq \sum_{y\in \bbZ^d} G(x,y)a(y)$ when the sum is well defined.
If $a$ and $b$ are functions defined on $\bbZ^d$ such that
 $\sum_{x\in \bbZ^d}|a(x)b(x)|<\infty$
we use the notation $(a, b)$ to denote the scalar product
\begin{equation}\label{scalarprod}
(a, b)=\sum_{x\in \bbZ^d}a(x)b(x).
 \end{equation}
We extend our use of the notation \eqref{scalarprod} to the case where one of the terms is the infinitesimal variation of a stochastic process.
We consider the process $(J_t)_{t\ge 0}$ defined by
\begin{equation}\label{deffjt}
 J_t\coloneqq ( \mu^{\beta}_t , G \mu^{\beta}_{t})
 =\frac{\left(\hat Z^{\beta}_t , G \hat Z^{\beta}_t\right)}{\left(Z^{\beta}_t\right)^2}.
\end{equation}
Note that $(J_t)$ is bounded: since  $\mu^{\beta}_t$ is a probability measure we have, for any $x$,
$$G \mu^{\beta}_t(x)\le \max_{y\in \bbZ} G(x,y)= g(0)$$ and hence
\begin{equation}\label{boundJ}
0\le J_t\le g(0).
\end{equation}
Next, we consider the decomposition of $J_t$ into a martingale and a predictable process, which can be obtained by the use of Itô calculus.
\begin{proposition}\label{itoto}
We have
 \begin{equation}\label{itojj}
  J_t=g(0)+ N_t + A_t,
  \end{equation}
where
\begin{equation}\begin{split}\label{ntat}
N_t&\coloneqq 2\beta \int^t_0 \sum_{x\in \bbZ^d}\left( - J_s \mu^{\beta}_s(x)+\left(G \mu^{\beta}_{s}\right)(x)\mu^{\beta}_{s}(x)\right) \dd \eta^{(x)}_s,\\
A_t&\coloneqq \int^t_{0}\left[(\beta^2 g(0)-1)I_s+3\beta^2 I_sJ_s
  -4\beta^2\left((\mu^{\beta}_s)^2 , G\mu^{\beta}_s \right)\right]\dd s.
\end{split}\end{equation}

\end{proposition}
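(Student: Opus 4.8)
The plan is to compute the stochastic differential of $J_t = (\mu_t^\beta, G\mu_t^\beta)$ directly from Itô's formula, using the bilinearity of $(\cdot,\cdot)$ and the symmetry of $G$. First I would record the basic stochastic dynamics of the building blocks. The point-to-point partition function $\hat Z_t^\beta(x)$ solves $\dd \hat Z_t^\beta(x) = \beta\, \hat Z_t^\beta(x)\, \dd\eta_t^{(x)} + (\text{generator term})\,\dd t$; more precisely, writing $\hat Z_t^\beta(x) = \bE[e^{\beta \bH_t - t\beta^2/2}\ind_{\{Y_t=x\}}]$ and differentiating through the expectation, one gets $\dd \hat Z_t^\beta(x) = \tfrac12(\Delta \hat Z_t^\beta)(x)\,\dd t + \beta\, \hat Z_t^\beta(x)\,\dd\eta_t^{(x)}$, where $\Delta$ is the generator of $Y$ (this is the standard ``stochastic heat equation'' form, and the $\dd t$ drift of $Z_t^\beta = \sum_x \hat Z_t^\beta(x)$ vanishes because $\sum_x \Delta f(x) = 0$, consistent with $Z_t^\beta$ being a martingale). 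From here, $\mu_t^\beta(x) = \hat Z_t^\beta(x)/Z_t^\beta$ and one applies Itô's quotient/product rule. It is cleaner, though, to work with $J_t$ in the normalized form $J_t = (\hat Z_t^\beta, G\hat Z_t^\beta)/(Z_t^\beta)^2$ and apply Itô's formula to the function $(\mathbf z, w)\mapsto (\mathbf z, G\mathbf z)/w^2$ of the (infinite) vector $(\hat Z_t^\beta(\cdot), Z_t^\beta)$; but the most transparent route is to compute $\dd J_t$ as $\dd(\mu_t^\beta, G\mu_t^\beta) = 2(\dd\mu_t^\beta, G\mu_t^\beta) + (\dd\mu_t^\beta, G\,\dd\mu_t^\beta)$, using symmetry of $G$, and then plug in the semimartingale decomposition of $\mu_t^\beta$.

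The key computation is the dynamics of $\mu_t^\beta$. Using $\dd\hat Z_t^\beta(x)$ above and $\dd Z_t^\beta = \beta\sum_y \hat Z_t^\beta(y)\,\dd\eta_t^{(y)}$, Itô's rule gives
\begin{equation}
\dd\mu_t^\beta(x) = \beta\,\mu_t^\beta(x)\big(\dd\eta_t^{(x)} - \textstyle\sum_y \mu_t^\beta(y)\,\dd\eta_t^{(y)}\big) + \big(\text{drift}\big)\,\dd t,
\end{equation}
where the martingale part is the ``centered'' noise felt along the path and the $\dd t$ drift comes from (i) the Laplacian term $\tfrac12(\Delta\mu_t^\beta)(x)$, wait—actually $\tfrac12 \Delta$ acts on $\hat Z$, producing after normalization a term of the form $\tfrac12(\Delta\mu_t^\beta)(x)$ plus transport corrections, and (ii) the Itô cross-variation between $\hat Z_t^\beta(x)$ and $Z_t^\beta$ together with the $(Z_t^\beta)^{-2}$ convexity term, which together produce the $\beta^2$ terms $-\beta^2\mu_t^\beta(x)I_t + \beta^2\mu_t^\beta(x)\sum_y\mu_t^\beta(y)^2$-type contributions (note $I_t = \sum_y \mu_t^\beta(y)^2$). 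Then I would substitute this into $\dd J_t = 2(\dd\mu_t^\beta, G\mu_t^\beta) + \dd\langle \mu^\beta, G\mu^\beta\rangle_t$. The martingale part collapses, using symmetry of $G$, to exactly $N_t$ as written: the term $\sum_x G\mu_t^\beta(x)\mu_t^\beta(x)\,\dd\eta_t^{(x)}$ from the ``diagonal'' noise and the term $-J_t\sum_x\mu_t^\beta(x)\,\dd\eta_t^{(x)}$ from the centering (since $(\mu_t^\beta, G\mu_t^\beta) = J_t$). For the drift, the $\tfrac12\Delta$ contribution is handled by the crucial identity $G = -\tfrac12\Delta^{-1}$: the bilinear term $2(\tfrac12\Delta\mu_t^\beta, G\mu_t^\beta) = (\Delta\mu_t^\beta, G\mu_t^\beta) = (\mu_t^\beta, G\Delta\mu_t^\beta) = -(\mu_t^\beta, \mu_t^\beta) = -I_t$ — this is where the ``$-1$'' in the coefficient $(\beta^2 g(0)-1)$ comes from. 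The remaining $\beta^2$ drift terms, plus the quadratic variation term $\dd\langle\mu^\beta, G\mu^\beta\rangle_t = \beta^2\big[\,\cdot\,\big]\dd t$ coming from the noise in both slots, must be collected and matched, after careful bookkeeping, to $3\beta^2 I_s J_s - 4\beta^2((\mu_s^\beta)^2, G\mu_s^\beta) + \beta^2 g(0) I_s$; the $g(0) I_s$ piece arises from the diagonal of the quadratic variation $\sum_x G(x,x)\mu_t^\beta(x)^2 = g(0) I_t$.

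The main obstacle is purely the combinatorial bookkeeping of the $\dd t$ terms: there are several sources of $\beta^2$ drift — the Itô correction from normalizing by $Z_t^\beta$ (which contributes to each $\mu_t^\beta(x)$ separately), the cross-variation terms between the two $\mu^\beta$ factors in $J_t$, and the diagonal quadratic-variation term — and they must be assembled without sign or factor errors into precisely the three-term expression in $A_t$. I would organize this by first writing $\dd\mu_t^\beta(x)$ with all drift terms named, then expanding $2(\dd\mu_t^\beta, G\mu_t^\beta)$ and $\dd\langle\mu^\beta,G\mu^\beta\rangle_t$ term by term, repeatedly using symmetry of $G$, the identity $(\mu_t^\beta, G\mu_t^\beta)=J_t$, and $G = -\tfrac12\Delta^{-1}$ to kill the Laplacian. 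A secondary technical point worth a remark is justifying the exchange of differentiation and the infinite sum over $x$ (and the convergence of $G\mu_t^\beta$), which follows from $0 \le G\mu_t^\beta(x)\le g(0)$ and the fact that $\mu_t^\beta$ is a probability measure, so all sums in sight are absolutely convergent and Itô's formula applies componentwise and then sums.
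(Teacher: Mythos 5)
Your overall route is the same as the paper's: apply Itô's formula to $J_t$ using the stochastic heat equation for $\hat Z_t^\beta(x)$. The paper, however, never computes $\dd\mu^\beta_t$ directly: it first computes the decomposition of the bilinear quantity $(\hat Z^\beta_t, G\hat Z^\beta_t)$ (which is cleaner because it involves no division), separately records $\dd Z^\beta_t$, $\dd\langle Z^\beta\rangle_t$ and $\dd\langle Z^\beta, (\hat Z^\beta, G\hat Z^\beta)\rangle_t$, and then applies Itô once to $f(a,b)=a/b^2$. Your route of nesting two Itô computations (first for $\mu^\beta_t=\hat Z^\beta_t/Z^\beta_t$, then for $(\mu^\beta_t,G\mu^\beta_t)$) is correct in principle but invites exactly the bookkeeping errors you are trying to avoid, and you do not in fact carry the bookkeeping out: the remaining $\beta^2$ drift terms are ``collected and matched, after careful bookkeeping'' only by assertion. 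Since the precise coefficients $3$, $-4$ and $g(0)$ in $A_t$ are the entire content of the proposition (they are what make Lemma~\ref{splash} and the subsequent bound \eqref{bbb} work), the argument as written has a genuine gap here.

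There is also a concrete factor-of-two inconsistency that you should fix. You write $\dd\hat Z_t^\beta(x)=\tfrac12(\Delta\hat Z_t^\beta)(x)\,\dd t+\beta\hat Z_t^\beta(x)\,\dd\eta_t^{(x)}$ ``where $\Delta$ is the generator of $Y$'', but if $\Delta$ is the generator then the Kolmogorov drift is $\Delta\hat Z$, with no prefactor; the paper indeed uses $(\Delta\hat Z_t^\beta)(x)\,\dd t$ in \eqref{dhatz}. Combined with $G=-\tfrac12\Delta^{-1}$ (which is forced because $g$ is the Green's function of the \emph{difference} walk $Y^{(1)}-Y^{(2)}$, whose generator is $2\Delta$), the Laplacian term contributes $2(\Delta\mu_t^\beta, G\mu_t^\beta)=2(\mu_t^\beta, G\Delta\mu_t^\beta)=-(\mu_t^\beta,\mu_t^\beta)=-I_t$. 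With your extra $\tfrac12$ you would instead get $2(\tfrac12\Delta\mu_t^\beta,G\mu_t^\beta)=(\mu_t^\beta,G\Delta\mu_t^\beta)=-\tfrac12 I_t$, which is inconsistent with your own claimed $-I_t$ and with the coefficient $(\beta^2 g(0)-1)$ in the statement. Your remark at the end about justifying the exchange of $\dd$ and $\sum_x$ by absolute convergence is sensible; the paper relegates this to a remark about $L^2$-convergence of truncated sums.
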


The proof of Proposition~\ref{itoto} is computation-intensive, so for expository reasons we postpone it to Section \ref{lastproof}.
We now give a quick overview of the proof of Proposition~\ref{keyprop2}.
The first important point, proved in Lemma~\ref{splash}, is that  the term $(\beta^2 g(0)-1)I_t$ dominates the two others terms appearing in the integrand of $A_t$ (namely, $I_sJ_s$ and $((\mu^{\beta}_t)^2 , G\mu^{\beta}_t )$) if $I_t$ is small.

\medskip

 Since $\beta^2 g(0)-1>0$ (cf. \eqref{g0positive}), on time intervals  $[s,t]$ where  $I_{\cdot}$ is small, the increment of $A_t-A_s$ is comparable to the  integral  $\int^t_{s}I_r\ \dd r$.
This implies that, if $K$ is chosen to be large, on the event
\begin{align*}
\left\{ \langle M \rangle_{\tau_{Ku}}-\langle M \rangle_{\tau_{u}}\ge\frac{\log K}{4}\ ; \ \max_{t\in(\tau_u,\tau_{Ku}]  } I_t\le \delta\right\}
\end{align*}
the increment $A_{\tau_{Ku}}-A_{\tau_u}$ is large. Since $J_t$ is bounded this must be compensated by having  $N_{\tau_{Ku}}-N_{\tau_u}$ be large in absolute value (and negative). We show that the probability of this latter event is small. This is done by computing the quadratic variation of $N$ and using Lemma~\ref{largedevcont}.

  \medskip
  
 The definition of $J_t$ may appear unmotivated at first. To get some intuition, notice that if we were to drop $G$ from the definition we are simply left with $I_t$, so $J_t$ measures how localized the polymer measure is. In fact, in \cite[Section 4]{CH06} they consider the process $I_t$ for the same purpose as $J_t$ defined above (note that the process $J_t$ appearing in \cite{CH06} has a different meaning). By including $G$ in \eqref{deffjt} we measure the localization of a ``smeared out'' version of $\mu_t^\beta$. A detailed explanation why this is desirable is given in Section~\ref{expl} after the conclusion of the calculations.

\smallskip At this point, we mention that the definition for $J_t$ is derived from a similar quantity defined in \cite[Section 3]{Y10} (for a discrete model, and using a truncated version of the partition function. See also \eqref{deffjn} in Section~\ref{discretos} of the present paper).
The Green operator $G$ seems a natural choice as a quantity to convolve with $\mu_t^\beta$ as it cancels with the Laplace operator $\Delta$ which appears naturally in the expression of $\dd \hat Z^{\beta}_{t}(x)$ -- this part yields a contribution $-I_t$ to $\dd A_t$.
The term $\beta^2 g(0) I_t$ corresponds to the quadratic variation terms coming from the numerator of \eqref{deffjt} (in the second expression involving $\hat Z^{\beta}_t$).
The other two terms come from the contribution of terms coming from the denominator, and from the interaction between numerator and denominator, respectively.

  \medskip

Let us now delve more into the proof of Proposition~\ref{keyprop2}. 
\begin{proof}[Proof of Proposition~\ref{keyprop2} assuming Proposition~\ref{itoto}]
For convenience, we introduce 
$ \tau'\coloneqq \inf \{t\geq \tau_u \ \colon  \ \int^t_{\tau_u} I_s \dd s=\frac{\log K}{4\beta^2} \}$.
Since on the event $\langle M\rangle_{\tau_{Ku}}-\langle M\rangle_{\tau_u}\geq \frac{\log K}4$ we have $\tau'\leq\tau_{Ku}$, it is sufficient to prove that 
\begin{equation}\label{mox}
   \bbP_u\left(  \max_{t\in(\tau_u,\tau']
   }  
I_t\le \delta\,;\,\tau'<\infty \right)\le \frac{1}{K^2}.
\end{equation}

We start with a technical result which allows to control the two last terms which appear in  \eqref{ntat} in terms of $I_t$.

\begin{lemma}\label{splash}
For every probability measure $\mu$ on $\bbZ^d$, it holds that, for all $x\in\Z^d$,
 \begin{equation}\label{3hold}
   \|G\mu\|_{\infty}\le \|g\|_4 \| \mu\|_2^{1/2}.
 \end{equation}
 where  $\|\cdot\|_{p}$ denote the $\ell_p$ norm for functions defined on $\bbZ^d$.
In particular,
 \begin{equation}\label{conseq}
  \|G \mu^{\beta}_t\|_{\infty} \le \|g \|_4 I^{1/4}_t, \quad J_t\le  \|g \|_4 I^{1/4}_t \quad \text{ and } \quad \left((\mu^{\beta}_t)^2 , G\mu^{\beta}_t \right)
  \le \|g\|_4 I^{5/4}_t.
 \end{equation}

\end{lemma}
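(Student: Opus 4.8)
The plan is to prove the pointwise bound \eqref{3hold} and then read off \eqref{conseq} as immediate consequences. For \eqref{3hold}, fix $x\in\Z^d$ and a probability measure $\mu$. Write $G\mu(x)=\sum_{y}g(x-y)\mu(y)=(g(x-\cdot),\mu)$. The naive Cauchy--Schwarz bound $G\mu(x)\le\|g\|_2\,(\sum_y\mu(y)^2)^{1/2}$ is the wrong exponent, so instead I would exploit that $\mu(y)\le 1$ to write $\mu(y)=\mu(y)^{1/2}\cdot\mu(y)^{1/2}$ and, more precisely, split the exponent as $\mu(y)=\mu(y)^{1/4}\cdot\mu(y)^{3/4}$. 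Applying Hölder's inequality with exponents $4$ and $4/3$,
\begin{equation}
 G\mu(x)=\sum_y g(x-y)\mu(y)^{1/4}\mu(y)^{3/4}\le \Big(\sum_y g(x-y)^4\mu(y)\Big)^{1/4}\Big(\sum_y \mu(y)\Big)^{3/4}.
\end{equation}
Hmm, this gives a factor $(\sum_y g(x-y)^4\mu(y))^{1/4}$ which I would then bound by $\|g\|_4\cdot\big(\sup_y\mu(y)\big)^{1/4}\le\|g\|_4\,(\sum_y\mu(y)^2)^{1/8}$, which is again the wrong power. A cleaner route: use Hölder with the \emph{pair} $(4/3,4)$ applied as $\sum_y g(x-y)\mu(y)\le\|g\|_4\,\|\mu\|_{4/3}$, and then bound $\|\mu\|_{4/3}$ by interpolation between $\|\mu\|_1=1$ and $\|\mu\|_2$: since $\tfrac{3}{4}=\tfrac{1}{2}\cdot 1+\tfrac{1}{2}\cdot\tfrac12$, one gets $\|\mu\|_{4/3}\le\|\mu\|_1^{1/2}\|\mu\|_2^{1/2}=\big(\sum_y\mu(y)^2\big)^{1/4}$, which is exactly \eqref{3hold}. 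So the key steps are: (i) Hölder $G\mu(x)\le\|g\|_4\|\mu\|_{4/3}$; (ii) the interpolation inequality $\|\mu\|_{4/3}\le\|\mu\|_1^{1/2}\|\mu\|_2^{1/2}$; (iii) $\|\mu\|_1=1$. I should also record $\|g\|_4<\infty$: since $d\ge 3$, $g(x)\asymp |x|^{2-d}$ as $|x|\to\infty$, so $g(x)^4\asymp|x|^{4(2-d)}=|x|^{8-4d}$ is summable precisely because $4d-8>d$ for $d\ge 3$ (indeed $3d>8$ fails only at $d=2$); at $d=3$ one has $8-4d=-4$ and $\sum_{|x|\ge 1}|x|^{-4}<\infty$ since $4>3$. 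Good.

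For \eqref{conseq}: apply \eqref{3hold} with $\mu=\mu^\beta_t$, recalling $\sum_x\mu^\beta_t(x)^2=I_t$, to get $G\mu^\beta_t(x)\le\|g\|_4 I_t^{1/4}$ for every $x$. Then $J_t=(\mu^\beta_t,G\mu^\beta_t)=\sum_x\mu^\beta_t(x)G\mu^\beta_t(x)\le\|g\|_4 I_t^{1/4}\sum_x\mu^\beta_t(x)=\|g\|_4 I_t^{1/4}$, using again that $\mu^\beta_t$ is a probability. Finally $\big((\mu^\beta_t)^2,G\mu^\beta_t\big)=\sum_x\mu^\beta_t(x)^2 G\mu^\beta_t(x)\le\|g\|_4 I_t^{1/4}\sum_x\mu^\beta_t(x)^2=\|g\|_4 I_t^{5/4}$.

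The only genuine obstacle is getting the exponent $1/4$ right — i.e. recognizing that one wants the $L^{4/3}$ norm of $\mu$ (dual to $\|g\|_4$) and then interpolating it between $L^1$ and $L^2$ — rather than falling into the $L^2$ Cauchy--Schwarz trap. Everything after that is bookkeeping: checking $\|g\|_4<\infty$ from the $|x|^{2-d}$ decay of the Green's function in $d\ge 3$, and three one-line substitutions into the definitions of $J_t$ and $\big((\mu^\beta_t)^2,G\mu^\beta_t\big)$.
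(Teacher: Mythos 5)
Your proof is correct and is essentially the same as the paper's: the paper writes $g(x-y)\mu(y)=g(x-y)\sqrt{\mu(y)}\sqrt{\mu(y)}$ and applies the three-function H\"older inequality with exponents $(4,4,2)$ directly, while you apply two-function H\"older with exponents $(4,4/3)$ and then interpolate $\|\mu\|_{4/3}\le\|\mu\|_1^{1/2}\|\mu\|_2^{1/2}$ --- two packagings of the identical estimate. The derivation of \eqref{conseq} and the check that $\|g\|_4<\infty$ match the paper as well.
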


\begin{remark}
 Here it is important to notice that since $g(x)\le C |x|^{2-d}$ in $d\ge 3$, we have $\|g\|_4<\infty$.
\end{remark}

\begin{proof}[Proof of Lemma~\ref{splash}]

The inequality is simply the multi-index H\"older inequality applied to the product the three functions with respective exponents $4$, $4$ and $2$,
\begin{equation}
 G\mu(x) =\sum_{y\in \bbZ^d} (g(x-y)\sqrt{\mu(y)}\sqrt{\mu(y)}
 \le \|g\|_4 \left( \sum_{y\in \bbZ^d} (\sqrt{\mu(y)})^4 \right)^{1/4}
 \left( \sum_{x\in \bbZ^d} (\sqrt{\mu(y)})^2 \right)^{1/2}.
\end{equation}
The inequalities in \eqref{conseq} immediately follow by applying \eqref{3hold} to $\mu^{\beta}_t$.
\end{proof}

%
%

Now, as a consequence of Lemma~\ref{splash} and \eqref{ntat}, we have  
\begin{equation}
A_{\tau'}-A_{\tau_{u}}\ge \int^{\tau'}_{\tau_u} \left[(\beta^2 g(0)-1)I_s
  -4\beta^2\|g\|_{4} I^{5/4}_s\right]\dd s.
\end{equation}
Hence, if $\tau'<\infty$ and $\max_{t\in(\tau_u,\tau']  }   I_t\le \delta$, we have 
  \begin{equation}
    A_{\tau'}-A_{\tau_{u}}\ge \int^{\tau'}_{\tau_u} (\beta^2 g(0)-1-4\beta^2\|g\|_4 \delta^{1/4})I_s\dd s =\frac{(\beta^2 g(0)-1-4\beta^2 \|g\|_4 \delta^{1/4})\log K}{4 \beta^2}.
  \end{equation}
This implies that 
\begin{equation}
   N_{\tau'}-N_{\tau_{u}}=J_{\tau'}-J_{\tau_u}- A_{\tau'}+A_{\tau_{u}}\le g(0)-\frac{(\beta^2 g(0)-1-4\beta^2 \|g\|_4  \delta^{1/4})\log K}{4 \beta^2}.
\end{equation}
and hence if $\delta$ is sufficiently small and $K$ is sufficiently large
\begin{equation}\label{bbb}
N_{\tau'}-N_{\tau_{u}} \le  - \frac{\left(\beta^2 g(0)-1\right)(\log K) }{5 \beta^2},
\end{equation}
where we recall that the numerator is positive due to \eqref{g0positive}.
On the other hand, we have 
\begin{equation}
	\langle N \rangle_{t}= 4\beta^2\int^{t}_{0}\sum_{x\in \bbZ^d}\mu^{\beta}_{t}(x)^2\left( G \mu^{\beta}_{t}(x)- J_t\right)^2\dd t.
\end{equation}
If   $\max_{t\in (\tau_u,\tau']}I_t< \delta$ then we obtain
 \begin{equation}\label{ccc}
  \langle N\rangle_{\tau'} -\langle N\rangle_{\tau_{u}}
  \le 4\beta^2 \| g\|^2_4 \delta^{1/2} \int^{\tau'}_{\tau_u} I_s \dd s=
  \beta^2 \| g\|^2_4 \delta^{1/2} \log K.
 \end{equation}
 We have used Lemma~\ref{splash} to bound $J_t$ and  $G \mu^{\beta}_{t}(x)$ and applied the inequality $(a-b)^2\leq \max(a^2,b^2)$ (valid for $a,b\geq 0$).
 We are finally ready to prove \eqref{mox}. Indeed from \eqref{bbb}--\eqref{ccc} (which both hold when $\tau'<\infty$ and $\max_{t\in (\tau_u,\tau']}I_t< \delta$) we have 
 \begin{multline}
   \bbP_u\left(  \max_{t\in(\tau_u,\tau']
   }  
  I_t\le \delta\;;\;\tau'<\infty \right)\\ \le 
  \bbP_u\left(  N_{\tau'}-N_{\tau_{u}} \le  - \frac{\left(\beta^2 g(0)-1\right)\log K }{5\beta^2}  \ ; \     \langle N\rangle_{\tau'} -\langle N\rangle_{\tau_{u}}\le   \beta^2 \| g\|^2_4 \delta^{1/2} \log K \right).
 \end{multline}
Applying Lemma~\ref{largedevcont} to the martingale $-(N_{t+\tau_u}-N_{\tau_u})$ (with $v=  \frac{\left(\beta^2 g(0)-1\right)\log K }{5\beta^2}$, and $a=\beta^2 \| g\|^2_4 \delta^{1/2} \log K$ -- we again note that the proof of the lemma works \textit{verbatim} with conditional expectation) we obtain that
\begin{equation}   \bbP_u\left(  \max_{t\in(\tau_u,\tau']
   }  
  I_t\le \delta,\tau'<\infty \right)
 \le \exp\left( - \frac{\left(\beta^2 g(0)-1\right)^2\log K}{50 \beta^6 \|g\|^2_4\delta^{1/2}}\right)\le K^{-2}
\end{equation}
where the last inequality is satisfied for $\delta$ sufficiently small. 
\end{proof}

\subsection{Proof of Proposition~\ref{itoto}}\label{lastproof} In
	 order to make the computation easier to follow, we start by decomposing the semimartingale $\left(\hat Z^{\beta}_t , G \hat Z^{\beta}_t\right)$ into a martingale part and a finite variation part. 
We start by observing that for each $x\in \bbZ^d$
\begin{equation}\label{dhatz}
 \dd \hat Z^{\beta}_t(x)= \beta  \hat Z^{\beta}_t(x)\dd \eta^{(x)}_t + \left(\Delta  \hat Z^{\beta}_t\right)(x)\dd t,
\end{equation}
that is to say $\hat Z^{\beta}_{\cdot}(\cdot)$ is the solution of the discrete-space continuous-time stochastic Heat equation with multiplicative noise (we refer to e.g. \cite[Proposition 5]{CH06} for a proof of this fact).
Hence we have, recalling that $G=-(2\Delta)^{-1}$,
\begin{equation}\begin{split}\label{dghatz}
 \dd \left(G \hat Z^{\beta}_t\right)(x)&=\beta\sum_{y\in \bbZ^d} g(x-y)\hat Z^{\beta}_{t}(y) \dd  \eta^{(y)}_t - \frac{1}{2}\hat Z^{\beta}_t(x)\dd t,\\
 \dd Z^{\beta}_t&= \beta \sum_{x\in \bbZ^d} \hat Z^{\beta}_t(x)\dd \eta^{(x)}_t.
 \end{split}
\end{equation}
\begin{remark}
The two lines of \eqref{dghatz} are obtained by taking the linear combination of countably many stochastic differential equations and similar operations are performed in the remainder of this proof starting with \eqref{dzgz}.
To justify this, we consider the truncated sums and check that both sides convergence in $L^2$. In order to keep this intermezzo section short we leave this verification, which presents no difficulty, to the reader.
\end{remark}
Now we can use Itô's formula for the scalar product and obtain
\begin{equation}\label{dzgz}
\dd \left(\hat Z^{\beta}_t , G \hat Z^{\beta}_t\right)
= 2\left( \hat Z^{\beta}_t , \dd G \hat Z^{\beta}_t\right)
+\sum_{x\in \bbZ^d } \dd \langle \hat Z^{\beta}(x), G \hat Z^{\beta}(x)\rangle_t.
\end{equation}
In the above formula we have used that $( \dd \hat Z^{\beta}_t , G \hat Z^{\beta}_t)
 =(  \hat Z^{\beta}_t ,  \dd G \hat Z^{\beta}_t)$ by self-adjointness of $G$.
We can compute the first term in the r.h.s.\ of \eqref{dzgz} using  \eqref{dhatz} and \eqref{dghatz}
\begin{equation}
 \left(  \hat Z^{\beta}_t ,  \dd G \hat Z^{\beta}_t\right)
 =\beta \sum_{x\in \bbZ^d} \hat Z^{\beta}_t(x) G \hat Z^{\beta}_t(x)
 \dd  \eta^{(x)}_t - \frac{1}{2}  \left( \hat Z^{\beta}_t ,  \hat Z^{\beta}_t\right)\dd t.
 \end{equation}
 More precisely, we use \eqref{dhatz} to compute the martingale part of  $(\dd   \hat Z^{\beta}_t ,  G \hat Z^{\beta}_t)$
(which is the same as that of  $(  \hat Z^{\beta}_t , \dd  G \hat Z^{\beta}_t)$) and \eqref{dghatz} for the predictable part. 
 Also using also \eqref{dhatz}-\eqref{dghatz}, we obtain
\begin{equation}
	\sum_{x\in\Z^d} \dd \langle \hat Z^{\beta}(x), G \hat Z^{\beta}(x)\rangle_t
 =\beta^2 g(0) \left( \hat Z^{\beta}_t ,  \hat Z^{\beta}_t\right)\dd t=\beta^2g(0) (Z^{\beta}_t)^2   I_t \dd t
\end{equation}
and altogether
\begin{equation}\label{itodenominator}
 \frac{\dd \left(\hat Z^{\beta}_t , G \hat Z^{\beta}_t\right)}{(Z^{\beta}_t)^2}= 2\beta \sum_{x\in \bbZ^d} \mu^{\beta}_t(x) G \mu^{\beta}_t(x)
 \dd  \eta^{(x)}_t+ (\beta^2 g(0)-1)I_t \dd t.
\end{equation}
To compute the infinitesimal variation of $J_t$, we apply Itô's formula to $f\big((\hat Z^{\beta}_t , G \hat Z^{\beta}_t)\ , Z^{\beta}_{t}\big)$ where $f(a,b)=a/b^2$.
 We obtain
 \begin{equation}\label{itoj}
  \dd J_t=\frac{\dd \big(\hat Z^{\beta}_t , G \hat Z^{\beta}_t\big)}{(Z^{\beta}_t)^2}-\frac{2 \dd Z^{\beta}_t \big(\hat Z^{\beta}_t , G \hat Z^{\beta}_t\big) }{ (Z^{\beta}_t)^3}+3\frac{ \dd \langle Z^{\beta}\rangle_t \big(\hat Z^{\beta}_t , G \hat Z^{\beta}_t\big)  }{ (Z^{\beta}_t)^4}
  -2 \frac{\dd \big\langle Z^{\beta}_t ,\big(\hat Z^{\beta} , G \hat Z^{\beta}\big)  \big\rangle_t}{(Z^{\beta}_t)^3}.
 \end{equation}
 We compute each term separately. 
Using \eqref{dghatz} (the second line) we have
\begin{equation}
 \dd \langle Z^{\beta}\rangle_t= \beta^2 \big( \hat Z^{\beta}_t ,  \hat Z^{\beta}_t\big)\dd t= \beta^2 (Z^{\beta}_t)^2   I_t \dd t.
\end{equation}
Combining \eqref{dghatz} with \eqref{dzgz} we have
 \begin{equation}
  \dd \big\langle Z^{\beta}_t ,\big(\hat Z^{\beta} , G \hat Z^{\beta}\big)  \big\rangle_t= 2\sum_{x\in \bbZ^d} \hat Z^{\beta}_t(x)^2 G\hat Z^{\beta}_{t}(x) \dd t=2\big ( (\hat Z^{\beta}_t)^2 , G\hat Z^{\beta}_{t}\big)\dd t.
 \end{equation}
 Replacing all terms in \eqref{itoj} by the expressions we have computed, and using the fact that $\hat Z^{\beta}_t/Z^{\beta}_t=\mu^{\beta}_t$ we obtain
 \begin{multline}
  \dd J_t= 2\beta\sum_{x\in \bbZ^d}\left(\mu^{\beta}_t(x) G \mu^{\beta}_t(x)-\mu^{\beta}_t(x) J_t\right)\dd \eta^{(x)}_t \\
  + \left[(\beta^2 g(0)-1)I_t+3\beta^2 I_tJ_t
  -4\beta^2\left ( (\mu^{\beta}_t)^2 , G\mu^{\beta}_t \right)\right]\dd t.
 \end{multline}
which is the required result.
 \qed

\subsection{Additional insight concerning the choice of $J_t$}\label{expl}
Let us now take advantage of the computation made in the previous section to provide further justification for the choice of $J_t$.
Given a function $f:\bbZ^d\to \bbR_+$ such that $\|f\|_4<\infty$ ( this restriction is taken only in view of Lemma \ref{splash}), let us define  $$J^{(f)}_t:=(\mu^{\beta}_t, f\ast \mu^{\beta}_t)= (\hat Z^{\beta}_t, f\ast \hat Z^{\beta}_t) / (Z^{\beta}_t)^2$$ where $\ast$ denotes the convolution. 
Repeating the computation leading to \eqref{itodenominator}, the reader might check that 
\begin{equation}\label{ddroipio}
 \frac{\dd (\hat Z^{\beta}_t, f\ast \hat Z^{\beta}_t) }{(Z^{\beta}_t)^2}= 2\beta \sum_{x\in \bbZ^d} \mu^{\beta}_t(x) (f\ast \mu^{\beta}_t)(x) \dd \eta_t(x) + \left(\mu^{\beta}_t, f'\ast \mu^{\beta}_t \right)\dd t
\end{equation}
where the function $f'$ is defined by  $f'(x):= 2 (\Delta f)(x)+ \beta^2 f(0)$, or to put is more simply $f'= (2\Delta +\beta^2\delta_0)f$.
The crucial point that makes the proof in Section \ref{lastproof} work is that we have chosen $f$ such that $f'=c_{\beta}\delta_0$ for some $c_{\beta}\ne 0$ so that $c_\beta I_t$ appears on the r.h.s.\ of \eqref{ddroipio}. The other terms that appear in the drift part of $\dd J^{(f)}_t$ involve third or fourth power of $\mu^{\beta}_t$ and are thus negligible w.r.t.\ to $I_t$ when $I_t<\delta$ (this is rigorously taken care of by Lemma \ref{splash}).

\medskip

The value $\beta_2$ it a threshold value. The operator  $(2\Delta +\beta^2\delta_0)$ has a positive eigenvalue if and only if $\beta>\beta_2$.
The function $g$ defined in \eqref{defg} corresponds to the eigenfunction  of the operator $(2\Delta +\beta^2_2\delta_0)$ associated with the eigenvalue $0$ and hence hence $g'= (\beta^2-\beta^2_2)g(0)\delta_0$ (strictly speaking $g\in \ell_2$ only if $d\ge 5$ but it is a minor point for the sake of this discussion).

\section{Proof of Theorem~\ref{locaend}}\label{discretos}

\subsection{Organization of the proof}

In this Section we adapt the proof presented in the previous section to the discrete setup of directed polymer.
In analogy with $(M_t)$ we set for $n\ge 1$,  $M_{n}\coloneqq \sum_{i=1}^n\left(\frac{W^{\beta}_{i+1}}{W^{\beta}_i}-1\right).$
The process $(M_n)$ is a martingale and the convexity of $\log$ implies that
\begin{equation}\label{telescop}
\log W^{\beta}_n=\sum_{i=1}^n \log \left(\frac{W^{\beta}_{i+1}}{W^{\beta}_i}\right)\le M_n.
\end{equation}
Furthermore, as $\go_{1,0}$ is bounded from above, $M_n$ has bounded increments, namely
$ M_{n+1}-M_{n}\in[-1, L-1]$, where $L$ is as in \eqref{fromabove}. 
Using \eqref{condvar} we obtain that the predictable bracket of the martingale $M_n$ is given by
\begin{equation}\label{disbracket}
\langle M\rangle_n\coloneqq  \sum_{k=1}^n \bbE\left[ (M_{n}-M_k-1)^2 \ | \ \cF_{k-1} \right]=  \chi(\beta) \sum_{k=1}^n I_{k}.
\end{equation}
We set, in analogy with \eqref{tauucont}, given $u\ge 1$ and an event $A$,
\begin{equation}
\tau_u\coloneqq \inf\{ n\ge 0 \ \colon  \ W^{\beta}_n\ge u\} \quad \text{ and } \quad \bbP_u[A]\coloneqq  \bbE\left[\ind_A \middle| \cF_{\tau_u}\right].
\end{equation}
The first step in the proof is to show (as in Proposition~\ref{keyprop1}) that with large probability the increment of the bracket of $M$ over the interval $(\tau_u,\tau_{Ku}]$ is large.

\begin{proposition}\label{keyprop1dis}
If strong disorder holds then there exists  a constant $\theta>0$ (which depends only on $L$) such that for every $u$ and $K\ge L^2$, on the event $\tau_u<\infty$,
 \begin{equation}\label{disversion}
  \bbP_u\left( \langle M\rangle_{\tau_{Ku}}-\langle M\rangle_{\tau_{u}}\le \theta \log K    \ ; \ \tau_{Ku}<\infty \right)\le K^{-2}.
 \end{equation}

\end{proposition}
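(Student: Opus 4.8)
The plan is to adapt the proof of Proposition~\ref{keyprop1} to the discrete martingale $M_n$. Two ingredients are needed. The first is a deterministic lower bound on how much $M$ must increase between $\tau_u$ and $\tau_{Ku}$: by \eqref{telescop} we have $\log W^{\beta}_n\le M_n$, and moreover $M_n-\log W^{\beta}_n$ is non-decreasing in $n$ since each of its increments is of the form $(\rho-1)-\log\rho\ge0$ with $\rho=W^{\beta}_{i+1}/W^{\beta}_i>0$; hence, on the event $\{\tau_{Ku}<\infty\}$,
\begin{equation}
M_{\tau_{Ku}}-M_{\tau_u}\ \ge\ \log W^{\beta}_{\tau_{Ku}}-\log W^{\beta}_{\tau_u}\ \ge\ \log(Ku)-\log(Lu)\ =\ \log(K/L)\ \ge\ \tfrac12\log K ,
\end{equation}
using $W^{\beta}_{\tau_{Ku}}\ge Ku$, the bound $W^{\beta}_{\tau_u}\le L\,W^{\beta}_{\tau_u-1}<Lu$ from \eqref{fromabove}, and the hypothesis $K\ge L^{2}$. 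The second ingredient is a discrete counterpart of Lemma~\ref{largedevcont}: a bound on the probability that a martingale reaches level $v\coloneqq\tfrac12\log K$ while its predictable bracket is still at most $a\coloneqq\theta\log K$. Combining the two will produce \eqref{disversion}.

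The genuinely new point is the second ingredient, and here I would avoid Freedman's inequality --- whose error term is linear in the increment bound, hence far too lossy to reach $K^{-2}$ --- and instead exploit the explicit form of the increments of $M$. Since $W^{\beta}_n/W^{\beta}_{n-1}=\sum_{x\in\bbZ^d}D\mu^{\beta}_{n-1}(x)\,e^{\beta\go_{n,x}-\gl(\beta)}$, the additive increment of $M$ at step $n$ equals $\sum_{x}D\mu^{\beta}_{n-1}(x)\big(e^{\beta\go_{n,x}-\gl(\beta)}-1\big)$; conditionally on $\cF_{n-1}$ this is a convex combination, with $\cF_{n-1}$-measurable weights $D\mu^{\beta}_{n-1}(x)$, of independent mean-zero variables valued in $[-D\mu^{\beta}_{n-1}(x),\,(L-1)D\mu^{\beta}_{n-1}(x)]$, and its conditional variance is $\chi(\beta)I_n$ by \eqref{condvar}. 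Applying Hoeffding's lemma to each independent contribution and multiplying over $x$, together with the definition \eqref{overlap} of $I_n$, gives the Gaussian conditional bound
\begin{equation}
\bbE\!\left[e^{\lambda(M_n-M_{n-1})}\,\middle|\,\cF_{n-1}\right]\ \le\ \exp\!\big(\tfrac{L^{2}}{8}\lambda^{2}I_n\big)\qquad\text{for every }\lambda>0 .
\end{equation}
Since $\langle M\rangle_n=\chi(\beta)\sum_{k\le n}I_k$ is predictable by \eqref{disbracket}, a telescoping computation then shows that, for each $\lambda>0$, the process $\exp\!\big(\lambda M_n-\tfrac{L^{2}}{8\chi(\beta)}\lambda^{2}\langle M\rangle_n\big)$ is a non-negative supermartingale equal to $1$ at time $0$ --- the exact discrete analogue of the stochastic exponential used in the proof of Lemma~\ref{largedevcont}.

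To conclude, I would transfer this to $N_n\coloneqq M_{n+\tau_u}-M_{\tau_u}$, which under $\bbP_u$ is a martingale with the same conditional moment bound and with vanishing bracket at time $0$; running the proof of Lemma~\ref{largedevcont} verbatim (optional stopping at $T_v\coloneqq\inf\{n\ge0:N_n\ge v\}$, then Markov's inequality and Fatou) yields $\bbP_u[\,\langle N\rangle_{T_v}\le a\,;\,T_v<\infty\,]\le\exp\!\big(-2\chi(\beta)v^{2}/(L^{2}a)\big)$ for every $a>0$. By the first ingredient, on the event in \eqref{disversion} one has $T_v\le\tau_{Ku}-\tau_u<\infty$ and $\langle N\rangle_{T_v}\le\langle M\rangle_{\tau_{Ku}}-\langle M\rangle_{\tau_u}\le\theta\log K$, so the probability in \eqref{disversion} is bounded by $\exp\!\big(-\chi(\beta)(\log K)/(2L^{2}\theta)\big)=K^{-\chi(\beta)/(2L^{2}\theta)}$, which is $\le K^{-2}$ as soon as $\theta\le\chi(\beta)/(4L^{2})$. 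The main obstacle is the middle step: a large-deviation estimate free of any term linear in the increment bound is not automatic in discrete time and must be read off from the particular structure of the increments of $M$ (weighted averages of bounded i.i.d.\ centred variables) via Hoeffding's lemma; the remaining steps are a faithful transcription of the argument behind Proposition~\ref{keyprop1}.
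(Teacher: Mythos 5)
Your proof is correct, but your second ingredient is a genuinely different concentration tool from the paper's. The first ingredient (the deterministic bound $M_{\tau_{Ku}}-M_{\tau_u}\ge\log(K/L)\ge\tfrac12\log K$ via \eqref{fromabove} and $K\ge L^2$) is identical to the paper's, and your justification via the monotonicity of $M_n-\log W^\beta_n$ is a clean way to phrase it. For the concentration step, the paper proves Lemma~\ref{largedevdis}, a Bennett-type inequality valid for \emph{any} martingale with increments bounded by $A$, giving $\bbP[\langle N\rangle_{T_v}\le a;T_v<\infty]\le \exp(-\tfrac{v}{A+1}(\log\tfrac{v}{a(A+1)}-1))$; the logarithmic factor can be made as large as desired by shrinking $\theta$, which is why that route beats $K^{-2}$ and yields a $\theta$ depending only on $L$. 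You instead exploit the specific structure of the increment $M_n-M_{n-1}=\sum_x D\mu^\beta_{n-1}(x)(e^{\beta\go_{n,x}-\gl(\beta)}-1)$ as a weighted sum of conditionally independent, bounded, centered variables, and Hoeffding's lemma plus $\langle M\rangle_n-\langle M\rangle_{n-1}=\chi(\beta)I_n$ gives the purely Gaussian supermartingale $\exp(\lambda M_n-\tfrac{L^2\lambda^2}{8\chi(\beta)}\langle M\rangle_n)$ and the bound $\exp(-2\chi(\beta)v^2/(L^2a))$. This is arguably more elementary and yields a cleaner exponent, and you are right that vanilla Freedman (with its term linear in $A$) would not reach $K^{-2}$ — but the paper circumvents that by Bennett, not by Hoeffding. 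One small discrepancy: your $\theta$ must satisfy $\theta\le\chi(\beta)/(4L^2)$, so it depends on $\chi(\beta)$ as well as $L$, whereas the Proposition asserts (and the paper's proof delivers) a $\theta$ depending only on $L$. Since $\chi(\beta)$ can be made arbitrarily small at fixed $L$, this is a genuine weakening of the claim as stated; it is, however, harmless downstream, since Proposition~\ref{keyprop2dis} accepts any fixed $\theta>0$.
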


The second step (the analogue of  Proposition~\ref{keyprop1}) establishes that with large probability, if $\langle M\rangle_{\tau_{Ku}}-\langle M\rangle_{\tau_{u}}$ is large then $I_n$ has to be large somewhere in the interval $(\tau_u,\tau_{Ku}]$. 

\begin{proposition}\label{keyprop2dis}
 If strong disorder holds, then for any $\theta>0$ there exists a constant $\delta>0$ such that, for all $K$ large enough,  on the event $\tau_u<\infty$,
 \begin{equation}
  \bbP_u\left(   \langle M \rangle_{\tau_{Ku}} -\langle M \rangle_{\tau_u}\ge \theta \log K \ ; \  \max_{n\in(\tau_u,\tau_{Ku}]}
  I_n\le \delta \right)\le K^{-2}.
 \end{equation}

\end{proposition}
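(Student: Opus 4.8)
The plan is to transpose the argument of Section~\ref{seckeyprop} to discrete time, with Doob's decomposition playing the role of Itô's formula and a Freedman-type inequality playing the role of Lemma~\ref{largedevcont}. First I would introduce the discrete counterpart of the process $(J_t)$ from \eqref{deffjt}: a process $(J_n)_{n\ge 0}$ built from the endpoint distribution $\mu^{\beta}_n$ --- more precisely, as in \cite[Section~3]{Y10}, from a suitably truncated partition function, which is essentially the object \eqref{deffjn} --- and from the Green operator $G$ of the difference of two independent copies of $X$, of the form $J_n\coloneqq(\mu^{\beta}_n,G\mu^{\beta}_n)$. Since $d\ge 3$ the difference walk is transient, so $g(x)\le C|x|^{2-d}$, $\|g\|_4<\infty$, and $(J_n)$ is bounded, $0\le J_n\le g(0)$, exactly as in \eqref{boundJ}. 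The multi-index Hölder inequality of Lemma~\ref{splash} carries over verbatim and yields $G\mu^{\beta}_n(x)\le\|g\|_4 I_n^{1/4}$, $J_n\le\|g\|_4 I_n^{1/4}$ and $\big((\mu^{\beta}_n)^2,G\mu^{\beta}_n\big)\le\|g\|_4 I_n^{5/4}$, together with analogous control on the other quadratic-form quantities appearing below.

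The core of the argument --- and what I expect to be the main obstacle --- is the discrete analogue of Proposition~\ref{itoto}: the Doob decomposition $J_n=J_0+N_n+A_n$, with $(N_n)$ a martingale and $(A_n)$ predictable. There is no chain rule, so one computes $\bbE[J_n\mid\cF_{n-1}]$ directly, expanding $J_n$ via the recursion $\hat W^{\beta}_n(x)=e^{\beta\go_{n,x}-\gl(\beta)}(D\hat W^{\beta}_{n-1})(x)$ and integrating out $\go_{n,\cdot}$; concretely, one Taylor-expands the random factor $W^{\beta}_n/W^{\beta}_{n-1}=\sum_x e^{\beta\go_{n,x}-\gl(\beta)}D\mu^{\beta}_{n-1}(x)$ around $1$ and keeps track of all third- and higher-order moment contributions of the i.i.d.\ weights $e^{\beta\go_{n,x}-\gl(\beta)}$ (whose moments are finite by \eqref{expomoment} and are controlled through $e^{\beta\go_{1,0}-\gl(\beta)}\le L$). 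The outcome I anticipate is a drift increment of the form
\begin{equation*}
A_n-A_{n-1}\ge\kappa_\beta\,I_n-r_n,\qquad |r_n|\le C\big(I_n J_{n-1}+I_n^{5/4}+\cdots\big),
\end{equation*}
with $\kappa_\beta>0$: the strict positivity is precisely where strong disorder enters, through Theorem~\ref{b2bcrit} (i.e.\ through $\beta>\beta_2$), just as $\beta^2 g(0)>1$ was used in Section~\ref{seckeyprop}; by the discrete Lemma~\ref{splash} one then has $r_n\le C'(\max_{k\le n}I_k)^{1/4}I_n$ up to a bounded boundary term. The same computation should also record that on $\{I_k\le\delta\}$ one has $|N_n-N_{n-1}|\le C\delta^{1/4}$ (there $J_{n-1},J_n\le\|g\|_4\delta^{1/4}$ and $|A_n-A_{n-1}|\le C\delta$) and $\langle N\rangle_n-\langle N\rangle_{n-1}\le C\delta^{1/2}I_n$. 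Handling the truncation in \eqref{deffjn} --- and the comparison between $\mu^{\beta}_n$ and $D\mu^{\beta}_{n-1}$ it is designed to guarantee --- belongs to this step; everything else is a routine, if lengthy, transcription of Section~\ref{seckeyprop}.

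The endgame then mirrors the proof of \eqref{mox}. Set $\tau'\coloneqq\min\{n\ge\tau_u:\langle M\rangle_n-\langle M\rangle_{\tau_u}\ge\theta\log K\}$; since $\langle M\rangle_n-\langle M\rangle_{n-1}=\chi(\beta)I_n\le\chi(\beta)$ by \eqref{disbracket}, on the event of the statement we have $\tau'\le\tau_{Ku}<\infty$, $\max_{n\in[\tau_u,\tau']}I_n\le\delta$ and $\theta\log K/\chi(\beta)\le\sum_{k=\tau_u+1}^{\tau'}I_k\le\theta\log K/\chi(\beta)+1$. Stopping $N$ at $\sigma-1$, where $\sigma\coloneqq\inf\{n>\tau_u:I_n>\delta\}$ (this does not affect the event under consideration, on which $I\le\delta$ throughout $[\tau_u,\tau']$), makes the increments bounded by $C\delta^{1/4}$ almost surely (bar a single boundary increment at $\tau_u$, absorbed into constants). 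On that event the drift estimate gives
\begin{equation*}
A_{\tau'}-A_{\tau_u}\ge\big(\kappa_\beta-C'\delta^{1/4}\big)\sum_{k=\tau_u+1}^{\tau'}I_k\ge\frac{\kappa_\beta}{2}\cdot\frac{\theta}{\chi(\beta)}\log K
\end{equation*}
for $\delta$ small, whence $N_{\tau'}-N_{\tau_u}=J_{\tau'}-J_{\tau_u}-(A_{\tau'}-A_{\tau_u})\le g(0)-\tfrac{\kappa_\beta\theta}{2\chi(\beta)}\log K\le-c_1\log K$ for $K$ large, while $\langle N\rangle_{\tau'}-\langle N\rangle_{\tau_u}\le C\delta^{1/2}\big(\theta\log K/\chi(\beta)+1\big)\le C_2\delta^{1/2}\log K$. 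Hence the event of the proposition is contained in the event that the martingale $-\big(N_{\,\cdot\,\wedge(\sigma-1)}-N_{\tau_u}\big)$ reaches $v\coloneqq c_1\log K$ while its predictable bracket stays below $a\coloneqq C_2\delta^{1/2}\log K$, and the discrete replacement of Lemma~\ref{largedevcont} --- a Freedman-type inequality $\bbP_u(\,\cdot\,)\le\exp\big(-v^2/(2a+2\rho v)\big)$ for martingales with increments bounded by $\rho$, here $\rho=C\delta^{1/4}$ --- bounds its conditional probability by
\begin{equation*}
\exp\!\left(-\frac{(c_1\log K)^2}{2C_2\delta^{1/2}\log K+2C\delta^{1/4}c_1\log K}\right)=\exp\big(-c_\delta\log K\big),
\end{equation*}
where $c_\delta\to\infty$ as $\delta\downarrow 0$. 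Choosing $\delta=\delta(\theta)$ so small that $c_\delta\ge 2$, and then $K$ large enough, yields the bound $K^{-2}$; exactly as in Section~\ref{seckeyprop}, the deviation inequality and all the above go through \emph{verbatim} with $\bbP$ replaced by the conditional law $\bbP_u$.
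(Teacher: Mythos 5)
Your overall architecture matches the paper's: introduce a discrete $J_n$, take its Doob decomposition $J_n=J_0+N_n+A_n$, show that on $\{\max_{[\tau_u,\tau']}I\le\delta\}$ the drift $A_{\tau'}-A_{\tau_u}$ is of order $\log K$ while the predictable bracket $\langle N\rangle_{\tau'}-\langle N\rangle_{\tau_u}$ is of order $\delta^{\text{power}}\log K$, then conclude with a Freedman-type inequality (the paper's Lemma~\ref{largedevdis}). You also correctly identify where strong disorder enters (via Theorem~\ref{b2bcrit}, giving a strictly positive drift coefficient) and you correctly take $\tau'$ as the stopping time at which $\langle M\rangle-\langle M\rangle_{\tau_u}$ reaches $\theta\log K$.

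However, there is a real gap precisely at the step you declare ``routine'': the control of $\langle N\rangle$. You claim the ``same computation'' yields $\langle N\rangle_n-\langle N\rangle_{n-1}\le C\delta^{1/2}I_n$, but this is the one place where the truncation of the Green function is indispensable, and your description of the truncation's role (``the comparison between $\mu^\beta_n$ and $D\mu^\beta_{n-1}$'') is not its actual purpose. With the full Green operator $G$, the only available pointwise bound is $J_n\le\|g\|_4\,\|\mu_n\|_2^{1/2}$ from the Hölder lemma (note: the relevant $\ell_2$ norm here is $\|\mu_n\|_2$, not $I_n^{1/2}=\|D\mu_{n-1}\|_2$, which are $\cF_n$- and $\cF_{n-1}$-measurable respectively); squaring and taking conditional expectation using $\bbE[\|\mu_n\|_2^4\mid\cF_{n-1}]\lesssim I_n^2$ (the paper's \eqref{anotherineq}) only gives $\bbE[(N_n-N_{n-1})^2\mid\cF_{n-1}]\lesssim I_n^{1/2}$. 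On $\{I_n\le\delta\}$ this is a \emph{constant} of order $\delta^{1/2}$, not proportional to $I_n$, and summing over $n\in(\tau_u,\tau']$ gives $\delta^{1/2}(\tau'-\tau_u)$, which is not $O(\log K)$ since $\tau'-\tau_u$ can be much larger than $\log K$. The paper sidesteps this by truncating $G$ to $G_0$ so that $\|g_0\|_1=n_0<\infty$; the Young-type bound $J_n\le\|g_0\|_1\,\|\mu_n\|_2^2=n_0\|\mu_n\|_2^2$ (equation \eqref{borne}) then yields the crucial quadratic estimate $\bbE[(N_n-N_{n-1})^2\mid\cF_{n-1}]\le\kappa I_n^2$ of Lemma~\ref{Nquadratic}, whence $\le\kappa\delta I_n$, which sums to $O(\delta\log K)$ via \eqref{disbracket}. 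The truncation also gives the a.s.\ bound $|N_n-N_{n-1}|\le 2n_0$ needed in Lemma~\ref{largedevdis} without any stopping device; your alternative of stopping at $\sigma-1$ is workable in spirit but introduces a boundary term you have not actually controlled, since $J_{\sigma-1}$ depends on $I_\sigma>\delta$. In short: the skeleton is right, but Lemma~\ref{Nquadratic} — the part you compress into ``the same computation should also record'' — is the technical heart of the discrete proof, and it genuinely requires $\|g_0\|_1<\infty$, not merely $\|g\|_4<\infty$.
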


Finally, in a third step (analogous to Lemma~\ref{keylemma}), we guarantee that with large probability, there are no significant dips 
of $W^{\beta}_n$ between $\tau_u$ and $\tau_{Ku}$.
We set $
 \sigma_{u,K}\coloneqq  \min\{ n \ge \tau_u \ \colon  \ W^{\beta}_n\le \frac{u}{K}\}$.

\begin{lemma}\label{keylemmadis}
On the event $\tau_u<\infty$, we have
\begin{equation}
 \bbP_u( \sigma_{u,K}<\tau_{Ku}<\infty)\le K^{-2}.
\end{equation}
\end{lemma}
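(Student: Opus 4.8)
The plan is to follow the strategy of Lemma~\ref{keylemma}, but in a purely one-sided form. In the Brownian setting the stopped martingale $Z^{\beta}$ takes the exact values $u/K$ and $Ku$, which makes it possible to compute $\bbP_u(\sigma_{K,u}<\tau_{Ku}<\infty)$ exactly; in the discrete setting the downward jumps of $W^{\beta}_n$ are unbounded, so I only have the one-sided bound $W^{\beta}_{\sigma_{K,u}}\le u/K$ (with no matching lower bound). Since only the upper estimate $K^{-2}$ is required, this suffices. I note in passing that, in contrast with Lemma~\ref{keylemma}, the strong disorder assumption plays no role here: the sole input is that $(W^{\beta}_n)$ is a non-negative martingale.

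The proof would consist of three short steps. First, on the event $\{\sigma_{K,u}<\infty\}$ the definition of $\sigma_{K,u}$ immediately gives $W^{\beta}_{\sigma_{K,u}}\le u/K$. Second, since $\sigma_{K,u}$ is a stopping time, the shifted process $(W^{\beta}_{\sigma_{K,u}+k})_{k\ge 0}$ is a non-negative martingale for the filtration $(\cF_{\sigma_{K,u}+k})_{k\ge 0}$; applying Doob's maximal inequality to it conditionally on $\cF_{\sigma_{K,u}}$ — which is obtained, as in the proof of Lemma~\ref{dull}, by optional stopping at the first passage time to level $Ku$, truncated at a finite horizon, followed by monotone convergence, so that no uniform integrability is needed — yields, on $\{\sigma_{K,u}<\infty\}$,
\[
\bbP\Big(\sup_{n\ge\sigma_{K,u}}W^{\beta}_n\ge Ku\ \Big|\ \cF_{\sigma_{K,u}}\Big)\ \le\ \frac{W^{\beta}_{\sigma_{K,u}}}{Ku}\ \le\ \frac{1}{K^{2}}.
\]
Third, I would use the inclusion
\[
\{\sigma_{K,u}<\tau_{Ku}<\infty\}\ \subseteq\ \{\sigma_{K,u}<\infty\}\cap\Big\{\sup_{n\ge\sigma_{K,u}}W^{\beta}_n\ge Ku\Big\},
\]
which holds because on the left-hand event $\tau_{Ku}$ is finite and strictly larger than $\sigma_{K,u}$, so $W^{\beta}_{\tau_{Ku}}\ge Ku$ witnesses the supremum. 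Since $\tau_u\le\sigma_{K,u}$, we have $\cF_{\tau_u}\subseteq\cF_{\sigma_{K,u}}$; taking conditional expectation given $\cF_{\tau_u}$ and combining the two displays via the tower property then gives $\bbP_u(\sigma_{K,u}<\tau_{Ku}<\infty)\le K^{-2}$.

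I do not expect a genuine obstacle: this is the discrete counterpart of the elementary Lemma~\ref{keylemma}. The only two points requiring a little care are (i) that one should use the finite-horizon maximal inequality rather than optional stopping ``at infinity'', because $(W^{\beta}_n)$ is not uniformly integrable in the strong disorder regime; and (ii) that the size of the downward overshoot of $W^{\beta}$ below $u/K$ at time $\sigma_{K,u}$ is irrelevant, as only the upper bound $W^{\beta}_{\sigma_{K,u}}\le u/K$ enters (whereas in the Brownian case path-continuity pins this value exactly).
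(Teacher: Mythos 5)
Your proof is correct and follows essentially the same route as the paper's: the paper applies the optional stopping theorem to the martingale $W^{\beta}_{\sigma_{K,u}+s}$, which is exactly your Doob-maximal-inequality / first-passage argument in slightly different dress. Your side remark that strong disorder is not needed here is accurate and is consistent with the paper's statement of the lemma, which (unlike the two surrounding propositions) omits that hypothesis.
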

\begin{proof}
 Using the optional stopping theorem for the martingale 
 $M_{\sigma_{u,K}+s}$
 we obtain that on the event $\{\tau_u<\infty\}\cap\{ \sigma_{u,K}<\tau_{Ku}\}$ we have 
 \begin{equation}
  \bbP\left[ \tau_{Ku}<\infty | \ \cF_{\sigma_{u,K}} \right]
  \le \frac{1}{K^2},
 \end{equation}
which implies the desired result after integration.
\end{proof}

Like in the continuous setup, the hard work is the proof of Proposition~\ref{keyprop2dis}.
We postpone it to the next section. The proof of Proposition~\ref{keyprop1dis} relies on a concentration property for the martingale which is analogous to Lemma~\ref{largedevcont} (recall the definition of the martingale bracket in discrete time \eqref{disbracket}).

\begin{lemma}\label{largedevdis}
 Let $(N_n)$ be a discrete time martingale starting at $0$, with increments that are bounded in absolute value by $A> 0$. For $v>0$, let $T_v$ be the first time that
 $(N_n)$ hits $[v,\infty)$. For any $\alpha>0$, we have
 \begin{equation}
	 \bbP[ \langle N\rangle_{T_v}\le  a \ ; \ T_v<\infty ] \le e^{-\frac{v}{A+1}(\log \frac{v}{a(A+1)}-1)}.
 \end{equation}
\end{lemma}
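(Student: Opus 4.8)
The plan is to use the classical exponential‑supermartingale (Freedman/Bennett) argument in discrete time. Write $\phi(x):=e^x-1-x$ and let $(\cF_n)$ be a filtration to which $(N_n)$ is adapted, so that the predictable bracket satisfies $\langle N\rangle_k-\langle N\rangle_{k-1}=\bbE[(N_k-N_{k-1})^2\mid\cF_{k-1}]$. First I would record the elementary bound
\begin{equation*}
e^{\lambda y}-1-\lambda y\;\le\;\frac{y^2}{A^2}\,\phi(\lambda A)\qquad\text{for }\lambda>0,\ |y|\le A,
\end{equation*}
which is checked term by term from the power series (for $k\ge2$ one has $y^{k-2}\le A^{k-2}$, whether $k$ is even or odd, using $|y|\le A$). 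Applied to an increment $Y=N_k-N_{k-1}$, and using $\bbE[Y\mid\cF_{k-1}]=0$, this gives $\bbE[e^{\lambda(N_k-N_{k-1})}\mid\cF_{k-1}]\le\exp\big(\tfrac{\phi(\lambda A)}{A^2}(\langle N\rangle_k-\langle N\rangle_{k-1})\big)$, so that for each $\lambda>0$ the process
\begin{equation*}
Z^{\lambda}_n\;:=\;\exp\!\Big(\lambda\,N_{n\wedge T_v}-\tfrac{\phi(\lambda A)}{A^2}\,\langle N\rangle_{n\wedge T_v}\Big)
\end{equation*}
is a non‑negative supermartingale with $Z^{\lambda}_0=1$ (past $T_v$ both exponents are frozen, so there is nothing to check).

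Next I would extract the tail bound. By non‑negativity and Fatou's lemma, $\bbE[\liminf_n Z^{\lambda}_n]\le1$; on $\{T_v<\infty\}$ the sequence is eventually constant equal to $Z^{\lambda}_{T_v}$, hence $\bbE[Z^{\lambda}_{T_v}\ind_{\{T_v<\infty\}}]\le1$. On $\{T_v<\infty\}\cap\{\langle N\rangle_{T_v}\le a\}$ we have $N_{T_v}\ge v$ (the martingale enters $[v,\infty)$) and $\langle N\rangle_{T_v}\le a$, so $Z^{\lambda}_{T_v}\ge\exp(\lambda v-\tfrac{\phi(\lambda A)}{A^2}a)$, which yields
\begin{equation*}
\bbP\big[\langle N\rangle_{T_v}\le a\,;\,T_v<\infty\big]\;\le\;\exp\!\Big(-\lambda v+\tfrac{\phi(\lambda A)}{A^2}\,a\Big)\qquad\text{for every }\lambda>0 .
\end{equation*}

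Finally I would optimise in $\lambda$. One may assume $v>a(A+1)$, since otherwise the right‑hand side of the claim is already $\ge1$. Because $\phi(x)/x^2=\sum_{j\ge0}x^j/(j+2)!$ has non‑negative coefficients it is non‑decreasing on $(0,\infty)$, so $\phi(\lambda A)/A^2\le\phi(\lambda(A+1))/(A+1)^2$; substituting $\lambda=\mu/(A+1)$ and using $\phi(\mu)\le e^{\mu}$ gives, for all $\mu>0$,
\begin{equation*}
\bbP\big[\langle N\rangle_{T_v}\le a\,;\,T_v<\infty\big]\;\le\;\exp\!\Big(-\tfrac{\mu v}{A+1}+\tfrac{a}{(A+1)^2}\,e^{\mu}\Big).
\end{equation*}
Choosing $\mu=\log\big(v(A+1)/a\big)>0$ makes the second summand equal to $v/(A+1)$, giving the bound $\exp\big(-\tfrac{v}{A+1}(\log\tfrac{v(A+1)}{a}-1)\big)$, which is at most $\exp\big(-\tfrac{v}{A+1}(\log\tfrac{v}{a(A+1)}-1)\big)$ since $(A+1)^2\ge1$ --- the asserted estimate (in fact a slightly stronger one).

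I do not anticipate a genuine obstacle: this is a routine concentration estimate. The only points requiring a little care are the passage to the limit for a supermartingale that need not converge (handled by Fatou together with non‑negativity), and working at scale $A+1$ rather than $A$ in the last step, which is precisely what lets a single clean choice of $\mu$ work uniformly over all $A>0$ (optimising at scale $A$ instead would force a case distinction for small $A$).
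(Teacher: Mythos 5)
Your proof is correct and takes essentially the same route as the paper: both build an exponential supermartingale from a Taylor-series bound on the moment generating function of the bounded increments, apply optional stopping (you use Fatou on the stopped process, the paper invokes the optional stopping theorem directly), and optimize the exponent in $\lambda$. The only cosmetic difference is that you work with the Bennett-type coefficient $\phi(\lambda A)/A^2$ and relax it to $e^{\lambda(A+1)}/(A+1)^2$, whereas the paper passes straight to $\frac{e^{A\lambda}}{2}\lambda^2\le e^{(A+1)\lambda}$; this gives you a marginally sharper intermediate estimate, but the final bound and the optimal choice of parameter are the same.
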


\begin{proof}

Given $\gl>0$, we have for every $u\in [-A,A]$
\begin{equation*}
(e^{\gl u} - \gl u -1)\le  \frac{e^{A\gl}}{2} \left(\gl u\right)^2\le
e^{(A+1)\gl}u^2.
\end{equation*}
Hence we have
\begin{multline}
\bbE[ e^{\gl(N_{n+1}-N_n)} \ | \ \cF_n]
= \bbE[ e^{\gl(N_{n+1}-N_n)}-\gl(N_{n+1}-N_n)-1 \ | \ \cF_n]\\ \le  1+ e^{(A+1)\gl}
\bbE\left[ (N_{n+1}-N_n)^2 \ | \ \cF_n \right] 
\le e^{ e^{(A+1)\gl} \bbE\left[ (N_{n+1}-N_n)^2 \ | \ \cF_n \right]}.
\end{multline}
This implies that the process
 $(e^{ \gl  N_n - e^{(A+1)\gl}\langle N\rangle_n})_{n\ge 0}$
is a supermartingale.
Hence using the optional stopping theorem we have
\begin{equation}
\bbP\left[  \langle N\rangle_{T_v}\le  a \ ; \ T_v<\infty  \right]\le
  e^{-(\gl v - e^{(A+1)\gl}  a) }.
\end{equation}
We obtain the desired result by taking $\gl=\frac{1}{A+1} \log(\frac{v}{a(A+1)})$.
\end{proof}

\begin{proof}[Proof of Proposition~\ref{keyprop1dis}]
 We apply Lemma~\ref{largedevdis} to the martingale $(M_{\tau_u+s}-M_{\tau_u})_{s\ge 0}$ with $A=\max(L-1,1)$, $a=\theta\log K$ and $v=\log \sqrt{K}$.
 We have 
 \begin{equation}\label{petiteborne}
 \bbP_u\left[ \langle M\rangle_{\tau_u+T_v}-\langle M\rangle_{\tau_{u}}\le \theta \log K \ ; \ T_v<\infty \right] 
 \le K^{-\frac{1}{2(A+1)}(\log\frac{1}{2\theta(A+1)}-1)}\le K^{-2},
\end{equation}
where the last inequality holds for $\theta$ small enough.
Note that the boundedness of the environment implies $M_{\tau_u}\leq Lu$, so from \eqref{step1} and the telescopic argument in \eqref{telescop} we have
\begin{equation}
\log \sqrt{K}\le \log \left(\frac{K}{L}\right)\le  \log \frac{W^{\beta}_{\tau_{Ku}}}{W^{\beta}_{\tau_u}}\le M_{\tau_{Ku}}-M_{\tau_u},
\end{equation}
which guarantees that $T_v\le \tau_{Ku}-\tau_u$. Since $n\mapsto\langle M\rangle_n$ is increasing, \eqref{petiteborne} implies \eqref{disversion}.
\end{proof}

 Let us now briefly explained how the combination of the above yields our result repeating the argument displayed in the proof of Theorem~\ref{locaend2}.
\begin{proof}[Proof of Theorem~\ref{locaend}]
Similarly to \eqref{incluclu} we have
\begin{equation*} \left\{  \tau_{Ku}<\sigma_{u,K} \ ; \ \max_{n\in(\tau_u,\tau_{Ku}]  }
  I_n> \delta  \right\}\subset\left\{ \maxtwo{n\ge 0}{x\in \bbZ^d} \hat W^{\beta}_n(x)\ge \frac{\delta u}{K} \right\}.
  \end{equation*}
  Indeed if $I_n\ge \delta$ for some $n\in(\tau_u,\tau_{Ku}]$ and  $\tau_{Ku}<\sigma_{u,K}$ then we have $W^{\beta}_{n-1}\ge u/K$ and thus
  \begin{equation}
   \max_{x\in \bbZ^d} \hat W^{\beta}_{n-1}(x)\ge    \max_{x\in \bbZ^d} D\hat W^{\beta}_{n-1}(x)= W^{\beta}_{n-1} \max_{x\in \bbZ^d} \mu^{\beta}_{n-1}(x)\ge W^{\beta}_{n-1} I_n\ge \frac{\delta u}{K}.
  \end{equation}
Now, using Lemma~\ref{dull}, Propositions~\ref{keyprop1dis} and~\ref{keyprop2dis} as well as Lemma~\ref{keylemmadis}, we can repeat the computation~\ref{frombelow} and deduce from the above inclusion that

\begin{equation}
 \bbP\left( \maxtwo{n\ge 0}{x\in \bbZ^d} \hat W^{\beta}_n(x)\ge \frac{\delta u}{K}\right)\ge \frac{1}{u}\left(\frac{1}{LK}-\frac{3}{K^2}\right).
\end{equation}
which yields the desired result provided that we chose $K\geq 6L$.
\end{proof}

\subsection{Proof of Proposition~\ref{keyprop2dis}}

The proof is in essence the same as in the continuous case but it is slightly more technical since we cannot resort to the help of stochastic calculus. The idea remains to extract the result from the study of a process $(J_n)_{n\ge 0}$ which is adapted to the filtration $(\cF_n)$ and bounded. We use the Doob decomposition $J_n=A_n+N_n$, where $A_n$ is a predictable process and $N_n$ is a martingale.
We want to show that whenever $I_n$ is small, then $A_n-A_{n-1}$ is comparable to $I_n$, meaning that if $\max_{n\in (\tau_u,\tau_{Ku}]}I_n$ is small then $A_{\tau_{Ku}}-A_{\tau_{u}}$ is comparable to $\sum_{n=\tau_{u}+1}^{\tau_{Ku}} I_n$ which is large by Proposition~\ref{keyprop1dis}. Since $J_n$ is bounded, this has to be compensated by the martingale part increment $N_{\tau_{Ku}}-N_{\tau_u}$ being very negative. We show that the probability of the latter event is small using Lemma~\ref{largedevdis}.

\medskip

To get started, let us define the process $J_n$, which as in the continuous case, is defined in terms of the Green's function and the endpoint distribution. For a technical reason (which we expose after Lemma~\ref{aincrement}) we consider a truncated version of the Green function.
Using Theorem~\ref{b2bcrit} and \eqref{defbeta2}, since strong disorder holds, we can find $n_0(\beta)$ which is such that 
\begin{equation}
  \chi(\beta)\sum_{n=1}^{n_0}P^{\otimes 2} (X^{(1)}_n=X^{(2)}_{n})>1.
\end{equation}
With such an $n_0$ being fixed, we introduce $G_0$, the  Green function truncated at level $n_0$
\begin{equation}\begin{split}\label{defg0}
 g_{0}(x)&\coloneqq  \sum_{n=1}^{n_0}P^{\otimes 2} (X^{(1)}_n-X^{(2)}_{n}=x)= \sum_{n=1}^{n_0}P(X_{2n}=x),\\
 G_{0}(x,y)&\coloneqq g_{0}(y-x).
\end{split}\end{equation}
Recalling that $D$ is the transition matrix of the simple random walk, we have
 $G_0=\frac{D^2-D^{2(n_0+1)}}{1-D^2}$.
Next we define
\begin{equation}\label{deffjn}
 J_n\coloneqq  \left( \mu^{\beta}_{n} , G_0 \mu^{\beta}_{n}\right)=
 \frac{\left( \hat W^{\beta}_{n} , G_0 \hat W^{\beta}_{n}\right)}{ (W^{\beta}_n)^2}.
 \end{equation}
 We consider Doob's decomposition of $J_n$
 \begin{equation}
  J_n-J_0= N_n+ A_n,
 \end{equation}
where $A_n$ and $N_n$ are defined by $A_0=0$, $N_0=0$ and
\begin{equation}\begin{split}
A_{n+1}&=A_n+ \bbE\left[ J_{n+1}-J_n \ | \ \cF_n\right], \\
N_{n+1}&=N_n+ (J_{n+1}-J_n)-\bbE\left[ J_{n+1}-J_n \ | \ \cF_n\right].
\end{split}
\end{equation}
Without the help of stochastic calculus, we cannot derive ``simple'' expressions for $A_n$ and $N_n$ as in Proposition~\ref{itoto}.
However, building on similar ideas we can still use similar ideas to derive a useful bound on $A_n$.

\begin{lemma}\label{aincrement}
 We have
 \begin{equation*}
A_{n}-A_{n-1} \ge  \left(\chi(\beta) g_0(0)-1\right)I_n-4\chi(\beta)\left(\left(D \mu^{\beta}_{n-1}\right)^2, G_0\mu^{\beta}_{n-1}\right)
-2\chi_3(\beta) \sum_{x\in \bbZ^d }D \mu^{\beta}_{n-1}(x)^3.
 \end{equation*}
where $\chi_3(\beta)\coloneqq \bbE\left[ (e^{\beta \go_{1,0}-\gl(\beta)}-1)^3\right]$.

\end{lemma}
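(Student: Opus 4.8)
The plan is to transpose the computation behind Proposition~\ref{itoto} to discrete time, replacing Itô's formula by an explicit one-step expansion. Set $\xi_{n,x}\coloneqq e^{\beta\go_{n,x}-\gl(\beta)}$; conditionally on $\cF_{n-1}$ these are i.i.d.\ with mean $1$, variance $\chi(\beta)$, third central moment $\chi_3(\beta)$, and $0\le \xi_{n,x}\le L$. By the Markov property $\hat W^{\beta}_n(x)=\xi_{n,x}\,(D\hat W^{\beta}_{n-1})(x)$, so, writing $p\coloneqq D\mu^{\beta}_{n-1}$ (a probability vector on $\bbZ^d$, measurable with respect to $\cF_{n-1}$) and $\bar S\coloneqq\sum_{x\in\bbZ^d}(\xi_{n,x}-1)p(x)\in(-1,L-1]$, one gets $W^{\beta}_n/W^{\beta}_{n-1}=1+\bar S$ and
\begin{equation}
 J_n=\frac{\mathcal{N}}{(1+\bar S)^2},\qquad \mathcal{N}\coloneqq\sum_{x,y\in\bbZ^d}\xi_{n,x}\xi_{n,y}\,p(x)p(y)\,G_0(x,y).
\end{equation}

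Next I would use two elementary facts: since $G_0=\sum_{k=1}^{n_0}D^{2k}$ is symmetric and positive semidefinite, $\mathcal{N}\ge 0$; and since $\bar S>-3/2$, the pointwise bound $(1+\bar S)^{-2}\ge 1-2\bar S$ holds. Together these give $J_n\ge \mathcal{N}(1-2\bar S)$, hence
\begin{equation}
 A_n-A_{n-1}=\bbE[J_n\mid\cF_{n-1}]-J_{n-1}\ \ge\ \bbE[\mathcal{N}\mid\cF_{n-1}]-2\,\bbE[\mathcal{N}\bar S\mid\cF_{n-1}]-J_{n-1},
\end{equation}
and it remains to evaluate the two conditional expectations, which is a finite moment computation since the $\xi_{n,x}$ are independent of $\cF_{n-1}$. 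Using $\bbE[\xi_{n,x}\xi_{n,y}\mid\cF_{n-1}]=1+\chi(\beta)\ind_{\{x=y\}}$ together with $G_0(x,x)=g_0(0)$ and $\sum_x p(x)^2=I_n$ one gets $\bbE[\mathcal{N}\mid\cF_{n-1}]=(p,G_0p)+\chi(\beta)g_0(0)I_n$; then the identity $DG_0D=G_0-D^2+D^{2(n_0+1)}$ (immediate from $G_0=\sum_{k=1}^{n_0}D^{2k}$), the fact that $(\mu^{\beta}_{n-1},D^2\mu^{\beta}_{n-1})=\|D\mu^{\beta}_{n-1}\|^2=I_n$, and the non-negativity of $(\mu^{\beta}_{n-1},D^{2(n_0+1)}\mu^{\beta}_{n-1})$ yield $(p,G_0p)=(D\mu^{\beta}_{n-1},G_0D\mu^{\beta}_{n-1})\ge J_{n-1}-I_n$, so that $\bbE[\mathcal{N}\mid\cF_{n-1}]\ge J_{n-1}+(\chi(\beta)g_0(0)-1)I_n$. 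Expanding $\mathcal{N}$ in powers of $\xi_{n,x}-1$ and sorting the triple sum in $\bbE[\mathcal{N}\bar S\mid\cF_{n-1}]$ according to which of the three indices coincide, the surviving terms combine to $\bbE[\mathcal{N}\bar S\mid\cF_{n-1}]=2\chi(\beta)(p^2,G_0p)+g_0(0)\chi_3(\beta)\sum_x p(x)^3$. Assembling the pieces (and noting $g_0(0)=\sum_{k=1}^{n_0}P(X_{2k}=0)<\sum_{k\ge 1}P(X_{2k}=0)<1$ for $d\ge 3$, so the factor $g_0(0)$ in front of the last term is harmless) gives the asserted lower bound.

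The main obstacle is that $J_n$ depends nonlinearly --- through a ratio --- on the fresh environment $\go_{n,\cdot}$, so $\bbE[J_n\mid\cF_{n-1}]$ is not computable exactly; the device that makes the argument work is the one-sided bound $(1+\bar S)^{-2}\ge 1-2\bar S$, which is licit precisely because $\bar S\ge -1$ (equivalently $W^{\beta}_n>0$) and which is optimal among quadratic lower bounds, so that only the linear correction $-2\bbE[\mathcal{N}\bar S\mid\cF_{n-1}]$ survives --- at the cost of discarding the favourable, non-negative term $3\chi(\beta)I_nJ_{n-1}$ that would appear in an exact, continuum-style Itô computation (cf.\ the $3\beta^2I_tJ_t$ term in Proposition~\ref{itoto}). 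What remains is then careful bookkeeping: matching the combinatorics of coinciding indices to the central moments $\chi(\beta)$ and $\chi_3(\beta)$, and checking that every sum converges --- here the truncation to $G_0$ (a finite sum of powers of $D$, so $g_0(0)<\infty$) and the positive semidefiniteness of $G_0$ (used both for $\mathcal{N}\ge 0$ and for the correct sign of the $D^{2(n_0+1)}$ remainder) are exactly what is needed.
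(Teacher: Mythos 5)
Your proof follows essentially the same route as the paper's: write $\hat W^{\beta}_n(x)=\xi_{n,x}(D\hat W^{\beta}_{n-1})(x)$, linearize the denominator via $(1+u)^{-2}\ge 1-2u$ (the paper's \eqref{ineqq}), drop the non-negative $D^{2(n_0+1)}$-remainder from $DG_0D=G_0-D^2+D^{2(n_0+1)}$, and compute the surviving conditional moments in terms of $\chi(\beta)$ and $\chi_3(\beta)$. Your organization (writing $J_n=\mathcal{N}/(1+\bar S)^2$ and using positive semidefiniteness of $G_0$ to get $\mathcal{N}\ge 0$ before applying the linearization) is a cleaner packaging of the paper's expansion \eqref{8terms}, and all the moment computations check out.

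One caveat on your last line: your computation, like the paper's own display just before the end of its proof, yields $-2\chi_3(\beta)\,g_0(0)\sum_x D\mu^{\beta}_{n-1}(x)^3$ (and $G_0 D\mu^{\beta}_{n-1}$ rather than $G_0\mu^{\beta}_{n-1}$ in the middle term), which does not literally coincide with the Lemma as stated. You try to reconcile the $g_0(0)$ factor by noting $g_0(0)<1$, but that replacement is only legitimate when $\chi_3(\beta)\ge 0$, and $\chi_3(\beta)$, being a third central moment, need not be non-negative. This is not a defect of your argument so much as a typo already present in the paper (its own concluding inequality likewise disagrees with the Lemma's statement by these same constants), and it is harmless downstream: in the proof of Proposition~\ref{keyprop2dis} the $\chi_3$-term is absorbed into an $O(\delta^{1/2})I_n$ error whose sign and exact prefactor are irrelevant. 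It would be cleaner, though, to simply state the bound you actually prove rather than assert a reconciliation that silently assumes a sign condition.
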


In order to control the predictive bracket $\langle N\rangle_n$, instead of an explicit computation of $N_n$  (which would be quite tedious), we rely on a simple bound for $J_n$, and on the observation that $|N_n-N_{n-1}|\le |J_n-J_{n-1}|$.
 From Lemma \ref{splash} we have $0 \le J_n\le \|g_0\|_4 \|\mu^{\beta}_n \|^{1/2}_2$. The increments of the martingale $(N_n)_{n\ge 0}$ can thus be bounded in the following manner
\begin{equation}\label{tprop}
 |N_n-N_{n-1}|\le \|g_0\|_4 \max\left( \|\mu^{\beta}_n\|^{1/2}_{2} ,\|\mu^{\beta}_{n-1}\|^{1/2}_{2} \right).
\end{equation}
 The above observation remains valid even without truncating the Green function but unfortunately is not good enough for our purpose. Using the Cauchy-Schwarz inequality (first) and then the inequality
$\|f\ast g\|_2\le \|f\|_1 \|g\|_2$, we obtain that
\begin{equation}\label{borne}
 0\le J_n\le \|G_0 \mu^{\beta}_{n} \|_2 \| \mu^{\beta}_{n} \|_2\le \|g_0\|_1 \|\mu^{\beta}_{n} \|^2_2= n_0  \|\mu^{\beta}_{n} \|^2_2.
\end{equation}
We obtain thus a variant of \eqref{tprop} with an improved exponent for $\|\mu^{\beta}_{n}\| _2$ and $\|\mu^{\beta}_{n-1}\|_2$, that is
\begin{equation}\label{tprop2}
  |N_n-N_{n-1}|\le n_0 \max\left( \|\mu^{\beta}_{n} \|^2_2, \|\mu^{\beta}_{n} \|^2_2\right).
\end{equation}
This change of exponent has a crucial importance in \eqref{bquadra}.
We use it to prove the following.
\begin{lemma}\label{Nquadratic}
We have for every $n$
\begin{equation}\label{ptrop3}
\bbE\left[ (N_{n}-N_{n-1})^2 \ | \ \cF_{n-1}\right]\le \kappa I^2_n,
\end{equation}
where $ \kappa\coloneqq n^2_0 \left[  (2d)^2+ e^{\frac{\gl(8\gb)+\gl(-8\gb)}{2}}\right]$.
\end{lemma}
We postpone the proof of Lemmas~\ref{aincrement} and~\ref{Nquadratic} to the end of the section and 
show first how Proposition~\ref{keyprop2dis} may be deduced from them.

\begin{proof}[Proof of Proposition~\ref{keyprop2dis} assuming Lemmas~\ref{aincrement} and~\ref{Nquadratic}]
	Given $\theta>0$,  let us set 
$$\tau'\coloneqq \inf_{n\ge 0}\left\{ \langle M\rangle_n- \langle M\rangle_{\tau_u} \ge \theta \log K  \right\}.$$
Since on the event  $\langle M\rangle_{\tau_{Ku}}- \langle M\rangle_{\tau_u}\ge \theta \log K$ we have $\tau'\le \tau_{Ku}$, it is sufficient for our purpose to show that 
\begin{equation}\label{mox2}
 \bbP\left[ \max_{n\in (\tau_u,\tau']} I_n\le \delta,\tau'<\infty \right] \le K^{-2}.
\end{equation}
The first step is to show that $A_{\tau'}-A_{\tau_u}$ is large on the event above. To bound  the second term appearing in the r.h.s.\  of the inequality in Lemma~\ref{aincrement}, we use Lemma~\ref{splash} (with $G$ replaced by $G_0$) to obtain
\begin{equation}
 \left(\left(D \mu^{\beta}_{n-1}\right)^2, G_0\mu^{\beta}_{n-1}\right)\le \|g_0\|_4I^{5/4}_n \text{ and }
 \sum_{x\in \bbZ^d }D \mu^{\beta}_{n-1}(x)^3\le  \| D \mu^{\beta}_{n-1} \|^{3}_2= I^{3/2}_n.
\end{equation}
In the second inequality  we used the fact $\|\cdot \|_{p}\le \|\cdot\|_q$ when $p\ge q$.
Hence if $I_n\le \delta$ we have 
\begin{equation}
 A_n-A_{n-1}\ge I_n \left(\chi(\beta) g_0(0)-1- 4\delta^{1/4}\|g_0\|_4 \chi(\beta)-2\delta^{1/2} \chi_3(\beta)\right),
\end{equation}
and thus if $\tau'<\infty$ and $\max_{n\in (\tau_u,\tau']} I_n\le \delta$ we have (recall \eqref{disbracket})
\begin{equation}\label{controla}\begin{split}
 A_{\tau'}-A_{\tau_u}&\ge \left(\sum_{n=\tau_u+1}^{\tau'} I_n \right)\left(\chi(\beta) g_0(0)-1- 4\delta^{1/4}\|g_0\|_4 \chi(\beta)-2\delta^{1/2} \chi_3(\beta)\right)\\
 &= \frac{\left(\langle M\rangle_{\tau'}-\langle M\rangle_{\tau_u}\right) \left(\chi(\beta) g_0(0)-1- 4\delta^{1/4}\|g_0\|_4 \chi(\beta)-2\delta^{1/2} \chi_3(\beta)\right) }{\chi(\beta)}\\
 &\ge \frac{\theta \log K \left(\chi(\beta) g_0(0)-1- 4\delta^{1/4}\|g_0\|_4\chi(\beta)-2\delta^{1/2} \chi_3(\beta)\right) }{\chi(\beta)}.
 \end{split}
\end{equation}
Using \eqref{borne} and a trivial bound for $\|\mu^{\beta}_n\|_2$, we have
\begin{equation}\label{controln}
 N_{\tau'}-N_{\tau_u}=A_{\tau_u}-A_{\tau'}+J_{\tau'}-J_{\tau_u}
 \le A_{\tau_u}-A_{\tau'}+n_0.
\end{equation}
Choosing $\delta$ sufficiently small, \eqref{controln} and \eqref{controla} imply that, for all $K$ sufficiently large,
\begin{equation}\label{bn}
  N_{\tau'}-N_{\tau_u}\le - \frac{\theta\log K(\chi(\beta) g_0(0)-1)}{2 \chi(\beta)}.
\end{equation}
On the other hand, on the event $\max_{n\in (\tau_u,\tau']} I_n\le \delta$, we have from Lemma~\ref{Nquadratic} and \eqref{disbracket},
\begin{equation}\label{bquadra}
 \langle N\rangle_{\tau'}-\langle N\rangle_{\tau_u}\le \kappa \sum_{n=\tau_u+1}^{\tau'} I^2_n\le  \kappa \left(\delta \sum_{n=\tau_u+1}^{\tau'-1} I_n+\delta^2\right) \le \kappa\delta \left( \frac{\theta \log K}{\chi(\beta)} +\delta \right).
\end{equation}
Thus we obtain
\begin{multline}
  \bbP_u\left(  \max_{n\in (\tau_u,\tau']}
  I_n\le \delta\ ; \ \tau'<\infty \right) \\  \le \bbP_u\left
( N_{\tau'}-N_{\tau_u}\le - \frac{\theta\log K(\chi(\beta) g_0(0)-1)}{2 \chi(\beta)} \ ; \    \langle N\rangle_{\tau'}-\langle N\rangle_{\tau_u}  \le\kappa\delta \left( \frac{\theta \log K}{\chi(\beta)} +\delta \right) \right).
\end{multline}
Using Lemma~\ref{largedevdis} for the martingale $(N_{\tau_u}-N_{\tau_u+m})_{m\ge 0}$ with $A=n_0$ (which is a bound for the increments of $N$ by  \eqref{borne}), $v=\frac{\theta\log K(\chi(\beta) g_0(0)-1)}{2 \chi(\beta)}$ and  $a=\kappa \delta\left(  \frac{\theta \log K}{\chi(\beta)} +\delta \right)$ since $T_v\le \tau'-\tau_u$ on the above event we have 
\begin{equation}
  \bbP_u\left(  \max_{n\in (\tau_u,\tau']}
  I_n\le \delta \ ; \ \tau'<\infty\right)\le \bbP_u\left(T_v<\infty  \ ; 
  \ \langle N\rangle_{T_v+\tau_u}-\langle N\rangle_{\tau_u}\le a \right) \le e^{-\frac{v}{n_0+1}(\log(\frac{v}{a(n_0+1)})-1)}.
\end{equation}
To conclude the proof of \eqref{mox2} and hence of Proposition~\ref{keyprop2dis}, we need to check that the exponent is smaller than $-2\log K$, 
which is immediate if one chooses $\delta$  sufficiently small.
\end{proof}

\begin{proof}[Proof of Lemma~\ref{aincrement}]
We want to split $A_{n}-A_{n-1}$ into two parts: one corresponding to the addition of the $n$-th random walk step, and the other to reveal the environment $(\go_{n,x})_{x\in \bbZ^d}$.
Recalling that $D$ denotes the transition matrix of the random walk we have
\begin{equation}
\hat W^{\beta}_{n}(x)=  e^{\beta\go_{n,x}-\gl(\beta)}(D \hat W^{\beta}_{n-1})(x).
\end{equation}
and thus in particular 
\begin{equation}\label{martindecompo}
 \bbE[\hat W^{\beta}_{n} \ | \ \cF_{n-1}]=  D \hat W^{\beta}_{n-1}.
\end{equation}
We set $
 \tilde J_n\coloneqq  \frac{\left(G_0 D\hat W^{\beta}_{n-1} ,  D\hat W^{\beta}_{n-1}\right)}{ (W^{\beta}_{n-1})^2}$
and consider the decomposition
\begin{equation}\label{splitintwo}
 A_{n}-A_{n-1}=\bbE\left[ J_n-J_{n-1} \ | \ \cF_{n-1}\right]=\bbE\left[ J_n-\tilde J_{n} \ | \ \cF_{n-1}\right]+ (\tilde J_n-J_{n-1}).
\end{equation}
Using self-adjointness and the fact that $G_0$ and $D$ commute, we have
\begin{equation}\begin{split}\label{oneterm}
 \tilde J_{n}-J_{n-1}&=  \frac{\left( (G_0 D^2 -G_0)\hat W^{\beta}_{n-1} ,  \hat W^{\beta}_{n-1}\right)}{ (W^{\beta}_{n-1})^2}\\
 &=  \frac{\left( (D^{2(1+n_0)} -D^2) \hat W^{\beta}_{n-1} ,  \hat W^{\beta}_{n-1}\right)}{ (W^{\beta}_{n-1})^2} \ge - I_n,
\end{split}\end{equation}
where the last inequality is obtained by neglecting the $D^{2(1+n_0)}$ term (which is non-negative) and using self-adjointness of $D$.
This contribution corresponds to the one generated by the $\Delta \hat Z^{\beta}_t(x)$-term in \eqref{dhatz}.
Now we want to estimate
$\bbE\left[ J_{n+1}-\tilde J_{n} \ | \ \cF_n\right]$.
The difficulty here is to deal with the $(W^{\beta}_{n})^{-2}$ terms whose conditional expectation cannot easily be computed. In order to obtain an upper bound we ``linearize'' this term using the inequality $(1+u)^{-2}\ge 1-2u$, which is valid for all $u\ge -1$. We obtain 
\begin{equation}\label{ineqq}
  \frac{1}{(W^{\beta}_{n})^2}=   \frac{1}{(W^{\beta}_{n-1})^2} \left(1+ \frac{W^{\beta}_{n}-W^{\beta}_{n-1}}{W^{\beta}_{n-1}} \right)^{-2} \ge  \frac{1}{(W^{\beta}_{n-1})^2} \left(1- 2 \frac{W^{\beta}_{n}-W^{\beta}_{n-1}}{W^{\beta}_{n-1}} \right).
\end{equation}
The analogue of this linearization in the continuous case would be to ignore the third term in the r.h.s.\ of \eqref{itoj} (a term which had no crucial role in the proof of Proposition~\ref{keyprop2dis}). 
Note that in the r.h.s.\ of \eqref{ineqq} we have the sum of an $\cF_{n-1}$-measurable term and a martingale increment.
Using \eqref{martindecompo}, we decompose both sides of the scalar product in a similar fashion and obtain
\begin{equation*}\begin{split}
  J_{n}&\ge  \left( 1- \frac{2(W^{\beta}_{n}-W^{\beta}_{n-1} ) }{W^{\beta}_{n-1}} \right)\frac{\left( G_0\hat W^{\beta}_{n},  \hat W^{\beta}_{n}  \right)}{(W^{\beta}_{n-1})^2}\\
  &=\left( 1- \frac{2(W^{\beta}_{n}-W^{\beta}_{n-1} ) }{W^{\beta}_{n-1}} \right)\frac{\left( G_0 D \hat W^{\beta}_{n-1} +G_0(\hat W^{\beta}_{n}- D \hat W^{\beta}_{n-1})  , D\hat W^{\beta}_{n-1} +(\hat W^{\beta}_{n}- D \hat W^{\beta}_{n-1})  \right)}{(W^{\beta}_{n-1})^2}.
\end{split}\end{equation*}
The r.h.s.\ in the above equation is a product of three terms (two of them vector-valued and one multiplication is the inner product). Expanding the product, we obtain 
\begin{multline}\label{8terms}
  J_{n}\ge \tilde J_n - \frac{2(W^{\beta}_{n}-W^{\beta}_{n-1} ) }{W^{\beta}_{n-1}} \tilde J_n
  + 2\frac{\big(G_0 D \hat W^{\beta}_{n-1} ,   \hat W^{\beta}_{n}- D \hat W^{\beta}_{n-1}  \big)}{(W^{\beta}_{n-1})^2}\\
  - \frac{4(W^{\beta}_{n}-W^{\beta}_{n-1} ) }{W^{\beta}_{n-1} }\frac{\big(G_0 D \hat W^{\beta}_{n-1} ,   \hat W^{\beta}_{n}- D \hat W^{\beta}_{n-1}  \big)}{(W^{\beta}_{n-1})^2}
  \\+ \frac{\big(G_0(\hat W^{\beta}_{n}- D \hat W^{\beta}_{n-1}) ,   \hat W^{\beta}_{n}- D \hat W^{\beta}_{n-1}  \big)}{(W^{\beta}_{n-1})^2}\\
  -\frac{2(W^{\beta}_{n}-W^{\beta}_{n-1} ) }{W^{\beta}_{n-1}}\frac{\big( G_0\big(\hat W^{\beta}_{n}- D \hat W^{\beta}_{n-1}\big) ,  \hat W^{\beta}_{n}- D \hat W^{\beta}_{n-1}  \big)}{(W^{\beta}_{n-1})^2}.
\end{multline}
There are eight terms in the expansion but this includes two identical pairs (corresponding to the third and fourth terms above)  due to the self-adjointness of $G_0$, and hence only six terms appear in the r.h.s.\ above.
The second and third terms are martingale increments, they correspond to the martingale part in the Itô formula, the  two following terms correspond (after taking conditional expectation) to the ``quadratic variation terms'' of the Itô formula (recall that compared to \eqref{itoj} one term has disappeared due to the linearization of the quotient \eqref{ineqq}). An additional term appears compared to \eqref{itoj} corresponding to the product of three martingale increments.

\medskip

The computations of the quadratic variation parts are similar to the continuous case and give similar expressions. We have
\begin{equation*}\begin{split}
\bbE\left[\frac{\big(G_0(\hat W^{\beta}_{n}- D \hat W^{\beta}_{n-1}) ,  \hat W^{\beta}_{n}- D \hat W^{\beta}_{n-1}  \big)}{(W^{\beta}_{n-1})^2} \ | \ \cF_{n-1} \right]
&= \chi(\beta) g_0(0) I_n\\
\bbE\left[
 \frac{W^{\beta}_{n}-W^{\beta}_{n-1}  }{W^{\beta}_{n-1}}\frac{ \big(G_0 D \hat W^{\beta}_{n-1} ,  \hat W^{\beta}_{n}- D \hat W^{\beta}_{n-1}  \big)}{(W^{\beta}_{n-1})^2}  \ | \ \cF_{n-1}\right]
 &= \chi(\beta)\Big(\big(D \mu^{\beta}_{n-1}\big)^2, G_0 D\mu^{\beta}_{n-1}\Big).
\end{split}\end{equation*}
The last term can also be computed in the same fashion,
\begin{multline}
\bbE\left[
\frac{(W^{\beta}_{n}-W^{\beta}_{n-1} ) }{W^{\beta}_n}\frac{ \big(G_0\big(\hat W^{\beta}_{n}- D \hat W^{\beta}_{n-1}\big) ,  \hat W^{\beta}_{n}- D \hat W^{\beta}_{n-1}  \big)}{(W^{\beta}_{n-1})^2} \ | \ \cF_{n-1} \right]
\\ = \chi_3(\beta)g_0(0)\sum_{x\in \bbZ^d } (D \mu^{\beta}_{n-1})(x)^3,
\end{multline}
where $\chi_3(\beta)=\bbE\left[ \left(e^{\go-\gl(\beta)}-1\right)^3\right].$
Hence taking the conditional expectation in \eqref{8terms} we obtain 
\begin{equation*}
  \bbE\left[J_{n}-\tilde J_n \ | \ \cF_{n-1}\right]\ge \chi(\beta) g_0(0) I_n-4\Big(\big(D \mu^{\beta}_{n-1}\big)^2, G_0 D\mu^{\beta}_{n-1}\Big)-\chi_3(\beta)g_0(0)\sum_{x\in \bbZ^d } (D \mu^{\beta}_{n-1})(x)^3
\end{equation*}
and we conclude by combining the above with \eqref{splitintwo} and \eqref{oneterm}.
\end{proof}

\begin{proof}[Proof of Lemma~\ref{Nquadratic}]
 Using  \eqref{borne} we have 
\begin{equation}\begin{split}
 \bbE\left[ (N_{n}-N_{n-1})^2 \ | \ \cF_{n-1}\right]&= \bbE\left[ (J_{n}-J_{n-1})^2 \ | \ \cF_{n-1}  \right]-\bbE\left[J_{n}-J_{n-1} \ | \ \cF_{n-1} \right]^2\\ & \le  \bbE\left[ J^2_{n}+J^2_{n-1} \ | \ \cF_{n-1}  \right] \\
 &\le n^2_0\left(\bbE\left[ \|\mu^{\beta}_{n}\|^4_2   \ | \ \cF_{n-1}  \right]+  \|\mu^{\beta}_{n-1}\|^4_2\right).
 \end{split}
 \end{equation}
 To bound the second term we observe that   $\|\mu^{\beta}_{n-1}\|^2_2\le 2d \|D\mu^{\beta}_{n-1}\|^2_2=2d I_n.$
To conclude, we need prove that  that
\begin{equation}\label{anotherineq}
 \bbE\left[ \|\mu^{\beta}_{\go,n+1}\|^4_2 \ | \ \cF_n  \right]\le e^{\frac{\gl(8\gb)+\gl(-8\gb)}{2}} I^2_n.
\end{equation}
The inequality \eqref{anotherineq} is proved (with a different constant) in \cite[Proof of Lemma 4.4]{J22}, we reproduce here the short proof  for the reader's  convenience. We introduce the  $\cF_{n-1}$-measurable probability
measure $\alpha_{n}(x)\coloneqq  \frac{ (D\mu^{\beta}_{n-1})(x)^2 }{I_n}$. We have
\begin{equation}
\|\mu^{\beta}_{n}\|^2_2= \frac{\sum_{x\in \bbZ^d} \alpha_{n}(x) e^{2\beta \go_{n,x}}}{\big(\sum_{x\in \bbZ^d} (D\mu^{\beta}_{n-1}(x)) e^{\beta \go_{n,x}}\big)^2}  I_n,
\end{equation}

and hence
\begin{multline}
 \frac{\bbE\left[ \|\mu^{\beta}_{n}\|^4_2 \ \middle | \ \cF_{n-1}\right]}{I_n^2}\le  \bbE\left[ \frac{\left(\sum_{x\in \bbZ^d} \alpha_{n}(x) e^{2\beta \go_{n,x}}\right)^2}{\left(\sum_{x\in \bbZ^d} (D\mu^{\beta}_{n-1}(x)) e^{\beta \go_{n,x}}\right)^4}  \middle |\ \cF_{n-1} \right]\\ \le \sqrt{ \bbE\bigg[ \bigg(\sum_{x\in \bbZ^d} \alpha_{n}(x) e^{2\beta \go_{n,x}}\bigg)^4  \Big |\ \cF_{n-1} \bigg]  \bbE\bigg[ \bigg(\sum_{x\in \bbZ^d} (D\mu^{\beta}_{n-1}(x)) e^{\beta \go_{n}}\bigg)^{-8}  \Big  |\ \cF_{n-1} \bigg]}
   \le e^{\frac{\gl(8\gb)+\gl(-8\gb)}{2}},
\end{multline}
 the last inequality being Jensen's inequality for the probability measures $\alpha_{n}$ and $D\mu^{\beta}_{n-1}$.
\end{proof}

\appendix

\section{Proof of Theorem~\ref{b2bcrit}}\label{gapfilling}

Let us provide some details on how the strict inequality $\beta_c>\beta_2$ can be extracted from \cite{BT10}.
We want to show that there exists a $\beta>\beta_2$ such that $W^{\beta}_n$ is uniformly integrable.
The starting point is to observe that $W^{\beta}_n$ is uniformly integrable if for some $\gamma\in(0,1)$ we have
\begin{equation}\label{bonmoment}
\sup_{n\ge0}\bbE[ (W^{\beta}_n)^{1+\gamma}]=\sup_{n\ge0}\tilde\bbE_n[ (W^{\beta}_n)^{\gamma}]<\infty.
\end{equation}
Using Lemma~\ref{sbrepresent} and Jensen's inequality we have
\begin{equation}\label{jensenee}
\tilde\bbE_n[ (W^{\beta}_n)^{\gamma}]= E\otimes \bbE\otimes \hat \bbE\left[\left(\tilde W^{\beta}_n\right)^{\gamma} \right]
\le  E \left[\left(\bbE\otimes \hat \bbE \left[\tilde W^{\beta}_n\right]\right)^{\gamma} \right]
\end{equation}
where $\tilde W^{\beta}_n\coloneqq  E'\left[e^{\beta H_n(\tilde \go,X')-n\gl(\beta)} \right]$  ($X'$ is a simple random walk on $\bbZ^d$ with distribution denoted by $P'$) is the partition function corresponding to environment $\tilde \go$ .
We have 
\begin{equation}
\bbE\otimes \hat \bbE \left[\tilde W^{\beta}_n\right]= E'\otimes\bbE\otimes \hat \bbE \left[e^{\beta H_n(\tilde \go,X')-n\gl(\beta)}\right]   =E'\left[ e^{\left(\gl(2\beta)-\gl(\beta)\right)\sum^n_{i=1}\ind_{\{X_i=X'_i\}}} \right].
\end{equation}
The right-hand side corresponds to the partition function of the Random Walk Pinning  model (RWPM) studied in \cite{BT10,BS10,BS11}. It is a function of the random walk $X$ and is defined as 
\begin{equation}
 \bar Z^h_{n,X}\coloneqq  E'\left[ e^{h\sum^n_{i=1}\ind_{\{X_i=X'_i\}}} \right].
\end{equation}
Thus to show that \eqref{bonmoment} holds for some $\beta>\beta_2$,  it is sufficient to show that there exists an
$$h>h_0\coloneqq -\log \left(P^{\otimes 2}\left[\exists n\ge 1, X^{(1)}_n=X^{(2)}_n \right]\right)$$ such that  
$\sup_n\bbE\left[(\bar Z^h_{n,X})^{\gamma}\right]<\infty$ (recall that from \eqref{defbeta2} we have $\gl(2\beta_2)-2\gl(\beta_2)= h_0$).

\medskip
Letting $T_n(X,X')\coloneqq \sup\{ m\le n \ \colon  \ X_m=X'_m  \}$ we have
\begin{multline*}
  \bar Z^h_{n,X}
= \! \sum_{m=0}^n E'\! \left[ e^{h\sum^n_{i=0}\ind_{\{X_i=X'_i\}}}\ind_{\{T_n=m\}} \right]\le  \sum_{m=0}^n E' \! \left[ e^{h\sum^m_{i=0}\ind_{\{X_i=X'_i\}}}\ind_{\{X_m=X'_m\}} \right]=\colon \!\! \sum^n_{m=0}    Z^h_{m,X},
\end{multline*}
where $Z^h_{m,X}$ is the \textit{constrained} partition function of the RWPM.
As $\gamma\in(0,1)$ we have thus 
\begin{equation}
 \sup_{n\ge 0}\bbE\left[  (\bar Z^h_{n,X})^{\gamma}\right] \le \sum_{m\ge 1} \bbE[ (Z^h_{m,X})^{\gamma}].
\end{equation}
Thus to conclude, it is sufficient to prove that $u^h_n\coloneqq \bbE[ (Z^h_{m,X})^{\gamma}]$ is summable in $m$ for some value of $\gamma\in(0,1)$. In \cite{BT10} it is only shown that for some $h>h_0$ and  $\gamma=6/7$,  $u_{mL}$ is \textit{uniformly bounded} in $m$ for some fixed large integer $L$ (see \cite[Equation (41)]{BT10}). However the desired result can be rather easily obtained using some earlier estimates present in that proof. Let us provide the details here.
We show that there exists  an $h>h_0$, an integer $L\ge 1$ and $C\ge 0$ such that 
\begin{equation}\begin{split}\label{twothings}
\forall i\in \lint 0,L-1\rint,\ \forall m\ge 1,&\ \quad  u^h_{mL-i}\le C u^h_{mL},\\
 \sum_{n\ge 0} u^h_{mL}&<\infty.
 \end{split}
\end{equation}
The first line is valid for any choice of $h$ and $L$ provided that $C$ is chosen sufficiently large. It is an immediate consequence of the supermultiplicativity  $u_{n+k}\ge u_n u_k$ which itself follows from 
\begin{equation}
 Z^h_{n,X}\ge  E'\left[ e^{h\sum^{n+1}_{i=0}\ind_{\{X_i=X'_i\}}}\ind_{\{X_n=X'_n \text{ and } X_{n+k}=X'_{n+k}\}}\right]= Z^h_{n,X}Z^h_{k,X^{(n)}},
\end{equation}
where the random walk $X^{(n)}$ is defined by $X^{(n)}_i=X_{n+i}-X_n$. We  now show 
how the second line in \eqref{twothings} can be 
extracted from the proof presented in \cite[Section 3]{BT10}. 
Following that reference setting $\gamma=\frac{6}{7}$, then given $\delta>0$, we can find $h>h_0$, $L\ge 1$ and  $C>0$ such that for every $m\ge 1$, we have
\begin{equation}\label{ziop}
 u^h_{mL}\le  C\sum_{I\subset \lint 1,m\rint} \ind_{\{m\in I\}}\prod_{j=1}^{|I|} \frac{\delta}{(i_{j}-i_{j-1})^{6/5}},
\end{equation}
where $1\le i_1<\dots<i_{|I|}$ denotes by convention the ordered elements of the set $I\subset \lint 1,m\rint$ and by convention $i_0=0$.
More precisely \eqref{ziop} appears as \cite[Equation (38)]{BT10} but without the indicator $\ind_{\{m\in I\}}$ \footnote{There is also another harmless difference: The partition function considered in \cite[Equation (38)]{BT10} is not $Z^h_{mL,X}$ but a variant of it which differs by a multiplicative constant see \cite[Definition 2.9(5)]{BT10})}. However the reader can without difficulty see that the proof presented in \cite{BT10} yields the stronger inequality  \eqref{ziop} without the need for any modification (cf. the observation made below \cite[Equation (18)]{BT10}). 

\medskip

To conclude, we observe that \eqref{ziop} implies that $u^h_{mL}$ is summable. This is a consequence of the lemma stated below, which is a particular case of a classical result for terminating renewal processes. 
\begin{lemma}
 If $\varrho\coloneqq  \delta \sum_{n\ge 1} n^{-6/5}<1$,  then we have the following asymptotic equivalence when $m\to \infty$ 

 \begin{equation}
  \sum_{I\subset \lint 1,m\rint} \ind_{\{m\in I\}}\prod_{j=1}^{|I|} \frac{\delta}{(i_{j}-i_{j-1})^{6/5}} =  \frac{(1+o(1))\delta m^{-6/5}}{(1- \varrho)^2}.
 \end{equation}

\end{lemma}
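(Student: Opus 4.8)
Write $K(n):=\delta\,n^{-6/5}$ for $n\ge 1$, so that by hypothesis $\sum_{n\ge 1}K(n)=\varrho<1$; thus $K$ is a sub‑probability mass function on $\N$. The plan is to first recognize the left-hand side as a \emph{terminating renewal mass function}. Indeed, a nonempty set $I=\{i_1<\dots<i_k\}\subset\lint 1,m\rint$ with $m\in I$ is the same as a sequence $n_j:=i_j-i_{j-1}\ge 1$ ($j=1,\dots,k$, with $i_0=0$) satisfying $n_1+\dots+n_k=i_k=m$, and then $\prod_{j=1}^{|I|}\frac{\delta}{(i_j-i_{j-1})^{6/5}}=\prod_{j=1}^{k}K(n_j)$. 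Hence, writing $K^{*k}$ for the $k$-fold convolution on $\N$,
\begin{equation}
 S_m:=\sum_{I\subset\lint 1,m\rint}\ind_{\{m\in I\}}\prod_{j=1}^{|I|}\frac{\delta}{(i_j-i_{j-1})^{6/5}}=\sum_{k\ge 1}K^{*k}(m).
\end{equation}
I would then normalize: set $\tilde K:=K/\varrho$, a genuine probability mass function on $\N$, so that $S_m=\sum_{k\ge 1}\varrho^{\,k}\,\tilde K^{*k}(m)$.

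The point is that $\tilde K(n)\sim(\delta/\varrho)\,n^{-6/5}$ is regularly varying of index $-6/5$, hence $\tilde K$ is \emph{locally subexponential}, and I would invoke two classical facts from that theory. First, for every fixed $k\ge 1$,
\begin{equation}
 \tilde K^{*k}(m)\ \sim\ k\,\tilde K(m)\qquad(m\to\infty).
\end{equation}
The case $k=2$ is elementary: by the symmetry $j\mapsto m-j$ one has $K^{*2}(m)=2\sum_{j\le m/2}K(j)K(m-j)+O(m^{-12/5})$; using $K(m-j)\le 2^{6/5}K(m)$ for $j\le m/2$ and $K(m-j)/K(m)\to 1$ for each fixed $j$, splitting the sum at a large fixed $N$ and letting $N\to\infty$ gives $K^{*2}(m)\sim 2\varrho K(m)$, i.e.\ $\tilde K^{*2}(m)\sim 2\tilde K(m)$; the general $k$ follows by induction on the standard convolution‑closure property of (locally) subexponential mass functions. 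Second, I would use Kesten's bound: for every $\varepsilon>0$ there is $C_\varepsilon<\infty$ with $\tilde K^{*k}(m)\le C_\varepsilon(1+\varepsilon)^k\,\tilde K(m)$ for all $k,m\ge 1$.

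With these inputs the conclusion is immediate. Since $\varrho<1$, fix $\varepsilon>0$ with $\varrho(1+\varepsilon)<1$ and write
\begin{equation}
 \frac{S_m}{\tilde K(m)}=\sum_{k\ge 1}\varrho^{\,k}\,\frac{\tilde K^{*k}(m)}{\tilde K(m)}.
\end{equation}
The $k$-th summand tends to $k\varrho^{\,k}$ and is dominated by $C_\varepsilon\bigl(\varrho(1+\varepsilon)\bigr)^{k}$, which is summable, so dominated convergence yields $S_m/\tilde K(m)\to\sum_{k\ge 1}k\varrho^{\,k}=\varrho/(1-\varrho)^2$. Recalling $\tilde K(m)=K(m)/\varrho=\delta\,m^{-6/5}/\varrho$, this gives exactly $S_m=(1+o(1))\,\delta\,m^{-6/5}/(1-\varrho)^2$, as claimed.

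The main obstacle is not the algebra but making sure the classical machinery is correctly in lattice form: one needs the \emph{local} subexponentiality of $\tilde K$ (to get $\tilde K^{*k}(m)\sim k\tilde K(m)$ for mass functions, not just tails) and the \emph{local Kesten bound}, which together legitimize the interchange of $\sum_k$ and $\lim_m$; both hold because $n\mapsto n^{-6/5}$ is regularly varying. If one prefers a more self-contained route, an alternative is to work with the generating function $\widehat U(s)=\widehat K(s)/(1-\widehat K(s))$ and the identity
\begin{equation}
 \widehat U(s)-\frac{\varrho}{1-\varrho}+\frac{\varrho-\widehat K(s)}{(1-\varrho)^2}=\frac{\bigl(\varrho-\widehat K(s)\bigr)^2}{(1-\varrho)^2\,\bigl(1-\widehat K(s)\bigr)},
\end{equation}
whose $m$-th coefficient equals $S_m-K(m)/(1-\varrho)^2$; one then only needs $K^{*2}(m)\sim 2\varrho K(m)$ (the elementary estimate above) together with the standard closure properties of $o(m^{-6/5})$‑decaying summable sequences under convolution to see that this coefficient is $o(m^{-6/5})$. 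I would present the subexponential argument as the main proof and relegate this variant to a remark.
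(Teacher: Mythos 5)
Your proof is correct, and it is in fact a self-contained version of the result the paper simply cites (Giacomin, \emph{Random Polymer Models}, Theorem~2.2, item (2), in the case $\beta=0$). The key steps — identifying the sum over subsets $I\ni m$ of $\lint 1,m\rint$ with the terminating-renewal mass function $\sum_{k\ge1}K^{*k}(m)$, normalizing $\tilde K=K/\varrho$, invoking local subexponentiality of the regularly-varying mass function to get $\tilde K^{*k}(m)\sim k\,\tilde K(m)$, and combining this with Kesten's bound and dominated convergence to sum the geometric series — constitute exactly the standard proof of the cited renewal asymptotics, so the underlying mathematics coincides with what the paper relies on; what you buy by writing it out is that the lemma no longer hangs on an external reference. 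Your generating-function alternative (expanding $\widehat U(s)-\tfrac{\varrho}{1-\varrho}+\tfrac{\varrho-\widehat K(s)}{(1-\varrho)^2}$ as a single rational expression and extracting coefficients) is also a classical route to the same statement; the only place I would urge a little extra care is the closing ``closure property'' claim in that remark, since the correction term $K^{*2}(m)-2\varrho K(m)$ is merely $o(m^{-6/5})$ and need not have a sign, so the convolution estimate against the summable sequence with $m$-th term $S_m$ needs a short argument (splitting the convolution at $m/2$, say) rather than a bare appeal to closure. Your main proof does not rely on this and is complete as written.
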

We refer to \cite[Theorem 2.2]{GiacBook} for a more general statement and its proof. More precisely, the lemma is obtained by applying \cite[Theorem 2.2(2)]{GiacBook} to the specific case $K(n)= \delta n^{-6/5}$ (the parameter $\beta$ mentioned in \cite{GiacBook} being equal to zero).
\qed

\section{Proof of Corollary~\ref{notessential}} \label{ouix}
We can without loss of generality assume that $\log u\ge 1$, since the result is easy to prove otherwise.
Note that we have 
\begin{align}
 \left\{\max_{(n,x)\in [0,C\log u]\times \bbZ^d}  \hat W_n(x)\ge u\right\}
 &\supseteq  \left\{\max_{(n,x)\in \bbN\times \bbZ^d}  \hat W_n(x)\ge u\right\} \setminus
 \left\{  \max_{ n> C \log u} W^{\beta}_n \ge 1\right\},\\
 \left\{\max_{n\in [0,C\log u]}   W_n\ge u\right\}
 &\supseteq  \left\{\sup_{n\in \bbN}   W_n\ge u\right\} \setminus
\left\{  \max_{ n> C \log u} W^{\beta}_n \ge 1\right\}.
\end{align}
In view of Theorem~\ref{locaend}  and Lemma~\ref{dull}, it is sufficient to show that the event
$\max_{ n> C \log u} W^{\beta}_n \ge 1$ has small probability, which is achieved using the following inequality (valid for some positive constant $\alpha$)
\begin{equation}\label{highrize}
 \bbP\left( W^{\beta}_n \ge 1\right) \le e^{-\alpha n}.
\end{equation}
To see that \eqref{highrize} holds, recall that from  \eqref{freen}  we have
 $\bbE\left[ \log W^{\beta}_n \right] \le n\f(\beta)$  and that $\f(\beta)<0$ by assumption.
 Hence 
 \begin{equation}
  \bbP\left( W^{\beta}_n \ge 1\right)\le   \bbP\left(\left( \log W^{\beta}_n- \bbE\left[ \log W^{\beta}_n\right]\right) \ge  -n \f(\beta)\right).
 \end{equation}
We the conclude by using an exponential concentration inequality proved as \cite[Theorem 6.1]{LW09}, which states that, under the assumption \eqref{expomoment}, for any $u\ge 0$ there exists  a constant $\theta(u)>0$ such that
\begin{equation}
 \bbP\left( \log W^{\beta}_n- \bbE\left[ \log W^{\beta}_n\right]\ge  un\right)\le e^{- \theta(u) n},
\end{equation}
and obtain \eqref{highrize} with $\alpha=\theta(-\f(\beta))$.
 \qed

\section*{Acknowledgements}
The authors wish to thank  Q.\ Berger, P.\ Carmona, R.\ Fukushima, Y.\ Hu and A.\ Teixeira for helpful comments. We are also grateful to two anonymous referees for their careful reading and helpful feedback.
This work was partially realized as H.L.\ was visiting  the Research Institute for Mathematical Sciences, an International Joint Usage/Research Center located in Kyoto University and he  acknowledges kind hospitality and support. 
This research was supported by JSPS Grant-in-Aid for Scientific Research 23K12984 and 21K03286. H.L.\ also acknowledges the support of a productivity grand from CNQq and of a CNE grant from FAPERj.

\bibliographystyle{plain}
\bibliography{ref.bib}

\end{document}